\numberwithin{equation}{section}
\newtheorem{thm}{Theorem}[section]
\newtheorem{lemma}[thm]{Lemma}
\newtheorem{cor}[thm]{Corollary}
\newtheorem{prop}[thm]{Proposition}
\newtheorem{conj}[thm]{Conjecture}
\newtheorem{question}[thm]{Question}
\newtheorem{rem}[thm]{Remark}
\newcommand{\overbar}[1]{\mkern 1.5mu\overline{\mkern-1.5mu#1\mkern-1.5mu}\mkern 1.5mu}
\def\nin{\noindent}
\def\wh{\widehat}
\def\sq{\square}
\def\zz{\mathbb Z}
\def\nn{\mathbb N}
\def\rr{\mathbb R}
\def\qqq{\mathbb Q}
\def\kkk{\mathbb K}
\def\ll{\mathbb L}
\def\sm{\smallsetminus}
\def\Ga{{{\rm G}}}
\def\De{\Delta}
\def\la{\lambda}
\def\ga{\gamma}
\def\si{\sigma}
\def\de{\delta}
\def\ep{\epsilon}
\def\al{\alpha}
\def\be{\beta}
\def\ve{\varepsilon}
\def\vp{\varphi}
\def\ssu{\subset}
\def\<{\langle}
\def\>{\rangle}
\def\La{\Lambda}
\def\Lap{\La^\ast}
\def\cA{{\mathcal{A}}}
\def\cD{{\mathcal{D}}}
\def\cG{{\mathcal{G}}}
\def\cM{{\mathcal{M}}}
\def\cP{{\mathcal{P}}}
\def\cR{{\mathcal{R}}}
\def\cF{{\mathcal{F}}}
\def\cx{{\mathcal{X}}}
\def\di{\diamond}
\def\Ups{\Upsilon}
\def\ts{\hskip.015cm}
\def\0{{\mathbf 0}}
\def\bk{\mathbf{k}}
\def\bm{\mathbf{m}}
\def\inv{{\text {\rm inv} } }
\def\CM{{\text {\rm CM} } }
\def\ds{s}
\def\dt{t}
\def\bl{\ell}
\def\.{\hskip.06cm}
\def\ts{\hskip.03cm}
\def\sp{\prime}
\begin{document}
\title{Domes over curves}

\author[Alexey Glazyrin]{ \ Alexey Glazyrin$^\star$}
\author[Igor~Pak]{ \ Igor~Pak$^\diamond$}


\thanks{\thinspace ${\hspace{-1.5ex}}^\star$School of Mathematical \& Statistical Sciences,
University of Texas Rio Grande Valley, Brownsville, TX 78520. \hskip.06cm
Email:
\hskip.06cm
\texttt{alexey.glazyrin@utrgv.edu}}

\thanks{\thinspace ${\hspace{-1.5ex}}^\diamond$Department of Mathematics,
UCLA, Los Angeles, CA, 90095.
\hskip.06cm
Email:
\hskip.06cm
\texttt{pak@math.ucla.edu}}


\maketitle

\vskip.4cm

\begin{abstract}
A closed piecewise linear curve is called \emph{integral} if it is
comprised of unit intervals. Kenyon's problem asks whether
for every integral curve~$\ga$ in $\rr^3$, there is a
\emph{dome over~$\ga$}, i.e.\ whether $\ga$ is a boundary
of a polyhedral surface whose faces are equilateral triangles
with unit edge lengths.
First, we give an algebraic necessary condition when $\ga$ is a
quadrilateral, thus giving a negative solution to Kenyon's problem
in full generality.  We then prove that domes exist over a dense
set of integral curves.  Finally, we give an explicit construction
of domes over all regular $n$-gons.
\end{abstract}

\vskip.9cm

\section{Introduction}\label{intro}
The study of polyhedra with regular polygonal faces is a classical
subject going back to ancient times.  It was revived periodically
when new tools and ideas have developed, most recently in connection
to algebraic tools in rigidity theory.  In this paper we study one
of most basic problems in the subject -- polyhedral surfaces in~$\rr^3$
whose faces are congruent equilateral triangles.  We prove both positive
and negative results on the types of boundaries these surfaces
can have, suggesting a rich theory extending far beyond the current
state of the art.

\smallskip

Formally, let \ts $\ga\ssu \rr^3$ \ts be a closed piecewise linear (PL-)
curve.\footnote{To avoid unnecessary technical difficulties, we do not consider
curves with different vertices mapped to the same point although all our results
remain true for them as well.}
We say that $\ga$ is \emph{integral} if it is comprised of intervals of integer length.
Now, let \ts $S \ssu \rr^3$ \ts be a PL-surface realized in $\rr^3$ with the
boundary \ts $\partial S=\ga$, and with all facets comprised of unit equilateral
triangles.  In this case we say that $S$ is a \emph{unit triangulation} or
\emph{dome over}~$\ga$, that $\ga$ is \emph{spanned} by~$S$, and that
$\ga$ \emph{can be domed}.
By a PL-surface we mean a realization of a pure connected
finite 2-dimensional simplicial complex, with no additional restriction
of embedding or immersion.

\begin{question}[Kenyon, see~$\S$\ref{ss:finrem-Ken}] \label{q:kenyon}
Is every integral closed curve $\ga\ssu\rr^3$ spanned by a unit triangulation?
In other words, can every such~$\ga$ be domed?
\end{question}

For example, the unit square and the (unit sided) regular pentagon can be
domed by a regular pyramid with triangular faces.  Of course, there is no
such simple construction for a regular heptagon.
Perhaps surprisingly, the answer to Kenyon's question
is negative in general.

\smallskip

A $3$-dimensional \emph{unit rhombus} is a closed curve \ts
$\rho\ssu \rr^3$ \ts with four edges of unit length. The unit rhombi form a $2$-parameter
family of space quadrilaterals $\rho(a,b)$ parametrized
by the \emph{diagonals}~$a$ and~$b$, defined as distances
between pairs of opposite vertices.

\begin{thm}\label{t:not}
Let \ts $\rho(a,b)\ssu \rr^3$ \ts be a unit rhombus with diagonals $a,b>0$.
Suppose $\rho(a,b)$ can be domed.  Then there is a nonzero polynomial
\ts $P\in \qqq[x,y]$, such that \ts $P(a^2,b^2)=0$.
\end{thm}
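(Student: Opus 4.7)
The plan is to use the polynomial nature of the unit-edge constraints to force $(a^2,b^2)$ to lie on a $\qqq$-algebraic curve in $\rr^2$.

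First, I would fix the combinatorial type $T$ of the dome $S$ spanning $\rho(a,b)$ and let $n$ denote its number of vertices. Euler's formula $n-e+f=1$ for a triangulated disk, combined with $3f = 2e - 4$ (triangular faces, four boundary edges), gives $e = 3n - 7$. Assigning positions $v_1,\dots,v_n \in \rr^3$ with $v_1,v_2,v_3,v_4$ the cyclically ordered boundary vertices, each edge $(i,j)$ of $T$ contributes the equation $\|v_i - v_j\|^2 = 1$ with integer coefficients. These $3n - 7$ equations cut out a $\qqq$-algebraic variety $V_T \ssu \rr^{3n}$, and $a^2 = \|v_1 - v_3\|^2$, $b^2 = \|v_2 - v_4\|^2$ are $\qqq$-polynomial functions on $V_T$, invariant under the $6$-dimensional group of isometries of $\rr^3$.

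The main step is a dimension count. By the standard bound for complete intersections, $\dim V_T \geq 3n - (3n-7) = 7$, so the quotient by $\mathrm{Isom}(\rr^3)$ has dimension at least $1$. I would argue this dimension equals $1$ exactly, and hence that the image of $(a^2,b^2)\colon V_T \to \rr^2$ has dimension at most $1$, so its Zariski closure is a proper subvariety of $\as^2_\qqq$ and yields the desired nonzero $P \in \qqq[x,y]$. To see this directly, one fixes a normalized placement of the boundary rhombus as an algebraic function of $(a,b)$ and views the $3n - 11$ interior-edge equations as a polynomial system in the $3n - 12$ interior coordinates, parametrized by $(a^2,b^2)$. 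Since the system is overdetermined by exactly one equation, eliminating the interior coordinates (via resultants or Gr\"obner bases over $\qqq[x,y]$) produces the polynomial $P$, whose vanishing on $(a^2,b^2)$ is necessary for the system to be solvable, i.e.\ for a dome of type $T$ to span $\rho(a,b)$.

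The principal obstacle is ensuring $P\neq 0$; otherwise the overdetermined system would be solvable for every $(a,b)$, meaning every rhombus could be domed by type $T$. To rule this out, I would invoke generic rigidity for triangulated frameworks in $\rr^3$: at a generic realization the rigidity matrix of $T$ has full rank $3n - 7$, which forces the realization space modulo isometry to be genuinely $1$-dimensional and the $3n - 11$ interior equations to be algebraically independent in the parameter-plus-coordinate ring. For the sparse set of types $T$ admitting infinitesimal degeneracies, I would either verify $P \neq 0$ by exhibiting a single rhombus not domable by $T$, or perturb combinatorially to a nearby generic triangulation. Since every dome realizes \emph{some} combinatorial type $T$, applying this to the given dome $S$ produces the required polynomial $P = P_T$.
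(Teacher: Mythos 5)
There is a genuine gap at the heart of your argument: the step ``I would argue this dimension equals $1$ exactly.'' Cutting $V_T$ out of $\rr^{3n}$ by $3n-7$ equations gives only the \emph{lower} bound $\dim \ge 7$ on every nonempty component; the matching upper bound is equivalent to the algebraic independence of the unit-length equations on the particular component containing the given dome $S$, which is precisely the hard content of the theorem and not something that can be extracted from the count. Your appeal to generic rigidity does not close this: generic rigidity concerns frameworks with generic edge lengths, whereas here every edge length is specialized to $1$ (a maximally non-generic specialization), and the rank of the rigidity matrix at generic points of one component says nothing about other, possibly higher-dimensional, components of $V_T$ on which the actual realization $S$ may lie. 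Your two fallbacks also fail: ``perturbing to a nearby generic triangulation'' changes the combinatorial type and hence the problem, while exhibiting one rhombus not domable by type $T$ would only show that the image of $(a^2,b^2)$ is a proper constructible subset of the plane --- such a set can still be two-dimensional, in which case no nonzero $P\in\qqq[x,y]$ vanishes on it. (A secondary issue: you assume $S$ is a disk, but a dome here is an arbitrary PL-surface, of any genus and possibly non-orientable, with boundary $\rho(a,b)$; for genus $g\ge 1$ your Euler count gives $e=3n-7+6g$ and the heuristic degenerates further.)

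This failure of naive dimension counting is exactly why the paper argues instead via the theory of places, in the tradition of the bellows conjecture. The dome is converted into a doubly periodic unit-triangulated surface with orthogonal period vectors of lengths $a$ and $b$ (Lemma~\ref{l:surface}); the Main Lemma~\ref{l:finite} then shows, using the Connelly--Sabitov--Walz result (Theorem~\ref{l:csw}) and a surgery induction on the complexity of the surface, that every place finite on all squared edge lengths must be finite on $(\lambda,\lambda)$ for some nonzero lattice vector $\lambda$. By Krull's theorem this forces an identity of the form $1=\sum_i r_i\prod_j(\lambda_{ij},\lambda_{ij})^{-1}$ in a localization of the edge-length algebra, and clearing denominators produces $P$ with leading form $\prod_j\bigl(k_j^2x+m_j^2y\bigr)$, whose nonvanishing is then a simple degree count. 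That valuation-theoretic integrality statement is what replaces --- and cannot be replaced by --- a rank or dimension argument.
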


In other words, for $a,b>0$ algebraically independent over \ts
$\overline \qqq$, the corresponding unit rhombus cannot be domed,
giving a negative answer to Kenyon's question.
In fact, our tools  give further examples
of unit rhombi which cannot be domed, such as \ts
$\rho\bigl(\frac1\pi,\frac1\pi\bigr)$, see Corollary~\ref{c:rhombi-further}.

\smallskip

The following result is a positive counterpart to the theorem.
We show that the set of integral curves spanned by a unit triangulation
is everywhere dense within the set of all integral curves, with respect to the topology induced by Fr\'{e}chet distance, see below.

Let $\ga,\ga'\ssu \rr^3$ be two integral closed curves of equal length.
We assume the vertices of $\ga,\ga'$ are similarly labeled
$\bigl[v_1\ldots v_{n}\bigr]$ and $\bigl[v_1'\ldots v_{n}'\bigr]$,
giving parametrizations of the curves. The \emph{Fr\'echet distance}
\ts $|\ga,\ga'|_F$ \ts in this case is given by
$$
|\ga,\ga'|_F \, = \, \max_{1\le i \le n} \. |v_i v_i'|\ts,
$$
where $|v_i v_i'|$ is the Euclidean distance between $v_i$ and $v_i'$.

\begin{thm}\label{t:dense}
For every integral curve \ts $\ga\ssu \rr^3$ \ts and \ts $\ve>0$,
there is an integral curve \ts $\ga'\ssu \rr^3$ \ts of equal length,
such that \ts $|\ga,\ga'|_F<\ve$ \ts and $\ga'$ can be domed.
\end{thm}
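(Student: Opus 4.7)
The plan is to perturb $\gamma$ to a nearby $\gamma'$ and simultaneously construct a dome over $\gamma'$ as a triangulated ``strip'' of unit equilateral triangles sweeping around the curve one edge at a time, capped off by an inner boundary that we arrange to be dome-able using a known building block (for instance a regular polygon, dome-able by the construction announced in the abstract).

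\smallskip

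\noindent
\emph{Setup.} First I would subdivide each integer-length edge of $\gamma$ into unit sub-edges, introducing straight-angle vertices as needed; this does not alter the image of $\gamma$. We may thus assume $\gamma=v_0v_1\cdots v_{N-1}$ with $N=\mathrm{length}(\gamma)$ and every edge of unit length. Since every face of a dome is a unit equilateral triangle, the perturbed curve $\gamma'=v'_0\cdots v'_{N-1}$ must also have all unit edges, so the freedom in choosing $\gamma'$ consists of $O(\ve)$ displacements of the vertices subject to the $N$ unit-length constraints.

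\smallskip

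\noindent
\emph{Sweep construction.} Over each edge $e_i=v'_{i-1}v'_i$ I erect a unit equilateral triangle $T_i$ with new apex $w_i$. The apex is free to slide on a circle $C_i$ of radius $\sqrt{3}/2$ perpendicular to $e_i$ through its midpoint, giving one degree of freedom per triangle. I additionally demand $|w_i-w_{i+1}|=1$ so that the quadrilateral $v'_{i-1}v'_i w_{i+1} w_i$ splits into two more unit equilateral triangles, yielding a connected triangulated strip whose outer boundary is $\gamma'$. Sweeping around $\gamma'$ one edge at a time, each $w_{i+1}$ is determined from $w_i$ up to a binary choice (the two points of $C_{i+1}$ at unit distance from $w_i$), and the strip closes up if the final apex coincides with the initial one.

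\smallskip

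\noindent
\emph{Closure, cap, and main obstacle.} The closure condition $w_N=w_0$ is two scalar equations. To satisfy it I would use the flexibility in perturbing the vertices of $\gamma$ (which provides $3N$ parameters), together with the branch choices, invoking an implicit-function-theorem argument to show that an $O(\ve)$ perturbation suffices. Once closed, the inner boundary $w_0w_1\cdots w_{N-1}$ is itself a closed integral unit-edge curve, and I would further tune the perturbation and branch choices so that this inner curve is planar --- in which case it bounds a region of the planar unit triangular lattice on its plane, and that region caps off the dome --- or coincides with a regular polygon, dome-able by the construction of $\S 1$. The main obstacle is verifying that the holonomy map sending (perturbation, branch data) to the closure defect is submersive at the initial trivial data, so that the implicit function theorem yields the $O(\ve)$ bound; a secondary difficulty is globally steering the inner apex chain to land in a shape known to be dome-able, which requires global control of how fold-angle choices propagate around the loop.
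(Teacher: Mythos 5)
Your construction is circular at the capping step: the triangulated strip reduces the problem of doming $\gamma'$ to the problem of doming the inner apex curve $w_0w_1\cdots w_{N-1}$, which is again a closed curve with $N$ unit edges --- an instance of the original problem, with no measure of progress. Neither of your two escape routes works as stated. A planar closed unit-edge curve does \emph{not} in general bound a region of a unit triangular lattice: its vertices need not lie on any common lattice, and indeed the paper conjectures (Conjecture~\ref{conj:set-A}, \S\ref{ss:big-planar}) that only countably many \emph{planar} unit rhombi can be domed at all, so planarity is very far from implying domeability. Steering the inner curve to be a regular polygon is a system of roughly $3N$ additional equations, and your perturbation budget is already spent on the $N$ unit-length constraints for $\gamma'$, the $N$ constraints $|w_iw_{i+1}|=1$, and closure; no dimension count is offered and none plausibly works. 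The closure step itself is also unresolved: $w_N=w_0$ is three scalar equations in $\rr^3$ (not two), nothing guarantees that the circle $C_{i+1}$ meets the unit sphere about $w_i$ at all, and the submersivity you would need for the implicit function theorem is exactly the hard rigidity-type question you flag but do not address.

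For comparison, the paper escapes this circularity by never attempting to cap an arbitrary residual curve. It introduces \emph{flips}: a vertex $v_k$ may be replaced by $v_k'$ whenever the rhombus $[v_{k-1}v_kv_{k+1}v_k']$ is itself domeable, and such rhombi are shown to be dense via an irrational-rotation argument ($c_m=\sqrt{4-a^2}\,|\sin m\alpha|$ with $\alpha\notin\pi\qqq$ when the diagonal $a$ is transcendental). Flips are used first to drive a generic curve toward a plane, then --- via the Steinitz Lemma with the Bergstr\"om constant $\sqrt{5/4}$ --- to reorder the edge vectors so that the curve fits in a ball of radius $3/2$; such compact curves are finally domed by induction on length, splitting off a pentagon at each step, with the base case handled by two explicit domeable rhombi sharing an apex. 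To salvage your sweep, you would need an ingredient of comparable strength certifying that the inner boundary you produce is genuinely simpler or already known to be domeable.
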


The theorem above does not give a concrete characterization of domed
integral curves, and such a characterization seems difficult
(see~$\S$\ref{s:big}).  We conclude with one interesting special case:

\begin{thm}\label{t:cyclic}
Every regular integral $n$-gon in the plane can be domed.
\end{thm}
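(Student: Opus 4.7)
The plan is to treat small $n$ with a classical pyramid or flat-tiling construction and to build an explicit dome for each $n \ge 7$ by a rotationally symmetric layered construction.

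\textbf{Reductions and small cases.} Any dome of the unit regular $n$-gon scales by an integer $k$ to a dome whose faces are equilateral triangles of side $k$; subdividing each such face into $k^2$ unit triangles yields a dome of the side-$k$ regular $n$-gon. So it suffices to dome the unit-sided $n$-gon. For $n \in \{3,4,5\}$ the circumradius $R_n = 1/(2\sin(\pi/n))$ satisfies $R_n \le 1$, and the right pyramid with apex at height $\sqrt{1 - R_n^2}$ above the centroid provides a dome; for $n = 6$, the regular hexagon tiles flatly into six unit equilateral triangles meeting at the center.

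\textbf{Construction for $n \ge 7$.} Now $R_n > 1$ and no pyramid dome exists. The fundamental building block I would use is an \emph{antiprism band}: above the $n$-gon with vertices $v_0, \dots, v_{n-1}$, place interior vertices $w_0, \dots, w_{n-1}$ by folding each triangle $v_i v_{i+1} w_i$ upward along its base through a common angle $\phi$. A short computation shows that the cross-triangles $v_{i+1} w_i w_{i+1}$ are also equilateral unit precisely when $\cos \phi = 2 R_n (\cos(\pi/n) - 1)/\sqrt{3}$, a value in $[-1, 1]$ for all $n \ge 3$, so the top boundary $w_0 w_1 \cdots w_{n-1}$ is another regular unit $n$-gon in a parallel plane. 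Since this merely reproduces the same problem, I would terminate by a \emph{shrinking} layer, in which the ring of apexes has strictly fewer vertices than the base: identify pairs of adjacent apexes so that one vertex is the apex of two consecutive boundary triangles (realizable whenever the isoceles triangle $v_{i-1} v_i v_{i+1}$ has circumradius at most $1$, which holds for $n \le 12$). Iterating such shrinking layers yields a sequence of regular polygons of strictly decreasing vertex count, capped in the regime $n \le 6$.

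\textbf{Main obstacle.} The hardest step is engineering a shrinking layer for every $n \ge 7$ and showing that the resulting tower of bands closes up with all triangles unit equilateral and the inner boundary matching exactly. The freedom lies in the folding angles and the combinatorial pattern of apex identifications, and in each case one has to verify a transcendental closure constraint. For large $n$ a single shrinking step may only modestly reduce the vertex count, requiring compound identifications (apexes grouped in triples or more); odd $n$ such as $n = 7$ may force an asymmetric pattern that breaks the $n$-fold symmetry. If a direct construction proves elusive, I would fall back on Theorem~\ref{t:dense} applied to an $n$-fold symmetric perturbation family and pass to a symmetric limit to recover the exact regular $n$-gon.
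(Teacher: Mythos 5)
Your construction stalls exactly where the real difficulty lies, and the proposed fallback does not repair it. The antiprism band reproduces the same regular unit $n$-gon one level up, so all the work is pushed into the ``shrinking layer,'' which you never construct: for $n\ge 13$ the pair-identification is impossible even locally (the circumradius of $[v_{i-1}v_iv_{i+1}]$ exceeds $1$), for odd $n$ no symmetric pairing exists, and in every case you acknowledge an unverified ``transcendental closure constraint.'' A rigid tower of single-triangle bands has essentially no free parameters, so there is no reason the inner boundary should ever close up exactly; this is precisely why no such elementary dome over $Q_7$ is known. The fallback is also invalid: Theorem~\ref{t:dense} produces curves $\ga'\in\cD_n$ with $|\ga,\ga'|_F<\ve$, but $\cD_n$ is not closed in $\cM_n$ (density plus Theorem~\ref{t:not} already shows it cannot be), and the approximating domes have unboundedly many triangles, so ``passing to a symmetric limit'' gives no dome over the limit curve.

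The paper's proof avoids both problems by using a much more flexible building block: the accordion rhombi $\rho_m(a,\ast)$ of Lemma~\ref{l:rhombus}, each consisting of $2m$ unit triangles and domable by construction. Successive rings of such rhombi, with multipliers chosen large so each ring is nearly planar and tilts slightly more steeply than the last, spiral the boundary inward toward the central axis; the whole surface then depends continuously on the initial fold angle $\theta$, and the final closure onto the axis is obtained by a continuity (intermediate value) argument in $\theta$ together with the $n$-fold symmetry, rather than by solving an exact closure equation. The only number-theoretic care needed is that the successive gap lengths $a_i$ stay non-algebraic, which holds for all but countably many $\theta$. If you want to salvage your layered picture, you should replace each single-triangle band by such a domed rhombus band and introduce a continuous parameter to close the cap, rather than hoping a rigid tower closes exactly.
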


This gives a new infinite class of \emph{regular polygon surfaces},
comprised of one regular $n$-gon and many unit triangles.
See Section~\ref{s:regular} for the proof and some previously
known special cases.

\medskip

\subsection*{Outline of the paper}
We begin with a  technical proof of Theorem~\ref{t:dense}
in Section~\ref{s:dense}. Our proof is constructive and almost
completely self-contained except for the Steinitz Lemma with
Bergstr\"om constant, see~$\S$\ref{ss:dense-packing}.
In Section~\ref{s:regular}, we follow with a (much shorter)
constructive proof of Theorem~\ref{t:cyclic},
which is almost completely independent of the previous section,
except for the earlier analysis of rhombi which can be domed,
see~$\S$\ref{ss:dense-rhombi}.

In Section~\ref{s:not}, we prove Theorem~\ref{t:not} by
extending the results of Gaifullin brothers~\cite{GG}.
We assume that the reader is familiar with the
\emph{theory of places}, see e.g.\ \cite[Ch.~1]{Lan}
and \cite[$\S$41.7]{Pak}, which played a key role in
the solution of the \emph{bellows conjecture},
see~\cite{CSW} (see also \cite[$\S$34]{Pak}).
Shifting gears once again, Section~\ref{s:big} is independent
of the rest of the paper.  Here we make a number of
interrelated conjectures on the integral
curves which can be domed, which we then relate to the
\emph{rigidity theory} and the \emph{Euclidean Ramsey theory}.
Final remarks are given in Section~\ref{s:finrem}.

In the Appendix~\ref{s:app}, we include a negative solution
of the question in~\cite{GG} on the dimension of the flexes
of doubly periodic surfaces.  This counterexample arose upon
careful inspection of our proof of Theorem~\ref{t:not}, and is
of independent interest (cf.~\cite{Sch}).

\medskip

\subsection*{Notation}
Let \ts $|vw|$ \ts denote the Euclidean distance between \ts $v,w\in \rr^3$.
We use \ts $[v_1\ldots v_n]$, \ts $v_i \in \rr^d$,
to denote a closed polygonal curve \ts $\ga \ssu \rr^3$ with vertices $v_1,\ldots,v_n$.
We use parentheses notation \ts $(a_1,\ldots,a_n)$, \ts $a_i > 0$, to denote
the edge lengths of~$\ga$, i.e.\ $a_i = |v_iv_{i+1}|$, and
$a_n=|v_nv_1|$.  Denote by \ts $|\ga|=a_1+\ldots+a_n\in \nn$ \ts
the length of the integral curve~$\ga$.

Throughout the paper all curves will be integral and
closed PL-curves in $\rr^3$, unless
stated otherwise.  Similarly, all PL-surfaces $S$ will have
unit triangles, unless stated otherwise.  They are realized
in~$\rr^3$ by the vertex coordinates and such realizations
have no additional extrinsic constraints (such as being
embedding or immersion, cf.~$\S$\ref{ss:finrem-Ken}).

\bigskip

\section{Integral curves which can be domed are dense}\label{s:dense}

\subsection{Understanding domes over curves}  \label{s:dense-icosa}
Before proceeding to the proofs of the theorems, let us give
some basic ideas of domes over curves, and how they can be built.
In Figure~\ref{f:icosa} two regular pentagonal pyramids give an example
of domes over a regular pentagon.  It's really the same dome up to a
rigid motion.  Similarly, the regular pentagonal
biprism on the right can also be viewed as a dome over
a disconnected integral curve comprised of two pentagons.
We will not consider disconnected curves until~$\S$\ref{ss:finrem-tri}.

Now, arrange the pyramid and the biprism as in the figure and notice
that they can be attached to either of them to form yet another example
of a dome over a pentagon. The idea of attaching triangulated PL-surfaces
will be used repeatedly through the paper, both for explicit constructions
and to disprove existence of other domes.

\begin{figure}[hbt]
 \begin{center}
   \includegraphics[height=2.4cm]{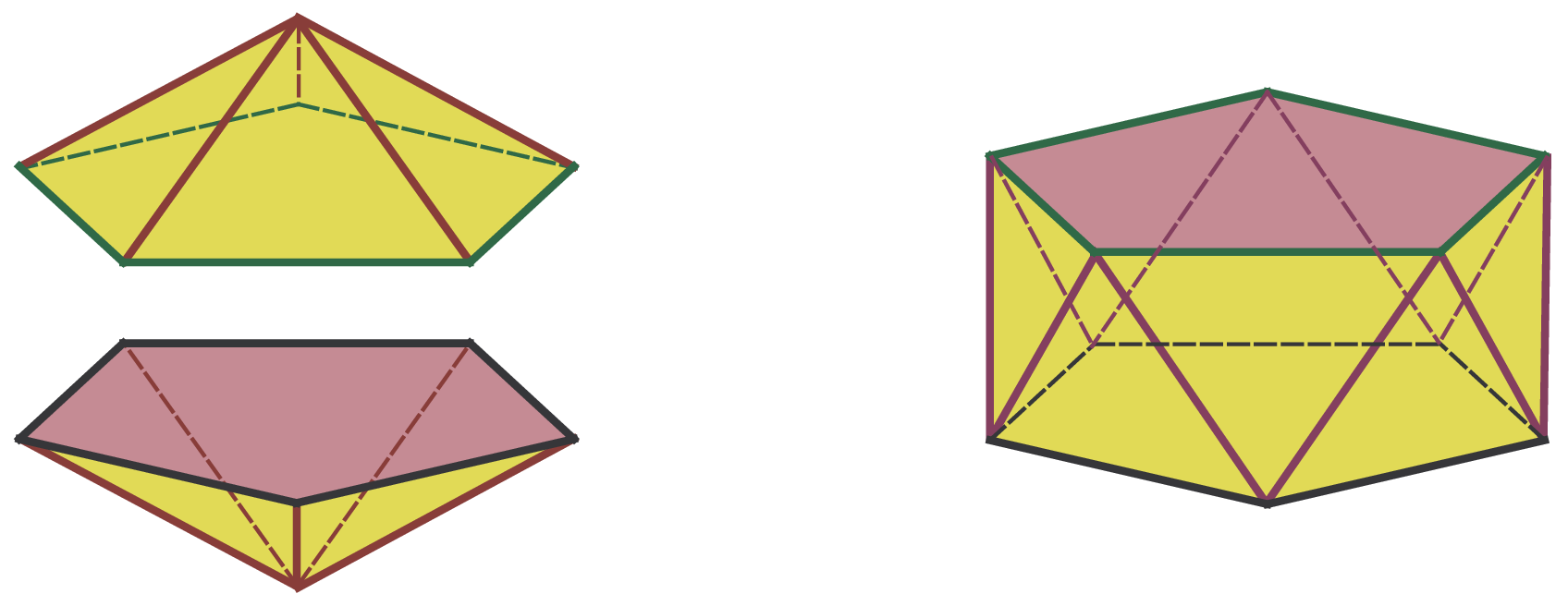}
   \vskip-.6cm \
   \caption{Three parts of an icosahedron as domes.}
   \label{f:icosa}
 \end{center}
\end{figure}

\medskip

\subsection{Mapping the proof of Theorem~\ref{t:dense}}
The basic idea is to show that ``generic'' curves can be simplified to
curves which can then be broken into pieces, each of which can have
an explicit construction of a dome.  The process of simplifying the
curve and the construction are sufficiently robust to allow reversing
the process.  In essence we first target a curve, and construct a
``generic'' curve that is close enough.

Formally, denote by $\cM_n$ the space of all integral curves
of length~$n$ in~$\rr^3$, modulo rigid motions, which is compact in the
Fr\'echet topology. Let \ts $\cD_n\ssu \cM_n$ \ts denote the subset of
integral curves which can be domed.  The goal of this section is to
prove Theorem~\ref{t:dense}, which states that $\cD_n$ is dense
in~$\cM_n$.

The proof goes through several stages of simplification of integral
curves, along the following route:
$$\text{\emph{integral curves}} \ \longrightarrow \ \text{\emph{generic curves}}
\ \longrightarrow \ \text{\emph{near planar curves}} \ \longrightarrow \
\text{\emph{compact near planar curves}.}
$$
Compact curves are curves which fit inside a ball of radius~$3/2$ and they
are much simpler to analyze by induction. At each stage, the simplification
of curves is made by a sequence of certain local transformations.
Starting the second arrow, these transformations are called \emph{flips}
and are obtained by attaching unit rhombi which can be domed.  Making
these reductions rigorous is somewhat technical and will occupy
much of this section.  The rhombi $\rho\in \cD_4$ will play
a special role, so we consider them first.

\medskip

\subsection{Dense rhombi}\label{ss:dense-rhombi}
Throughout the paper, a unit closed curve of length~$4$ is called
a \textit{unit rhombus}, or just a rhombus.
Each unit rhombus is determined by the \emph{diagonals}~$a$ and~$b$;
we denote such unit rhombus by $\rho(a,b)$. Observe that \ts
$a^2+b^2\leq 4$, with the equality achieved on plane rhombi.

\medskip

\begin{lemma}\label{l:rhombus}
Fix the diagonal~$a$, and suppose \ts $0<a<2$, \ts $a \notin \overline{\qqq}$.
Then the set of values of $b\ge 0$ for which \ts $\rho(a,b)\in \cD_4$
is dense in \ts $\bigl[0,\sqrt{4-a^2}\bigr]$.
\end{lemma}

\begin{proof}
Consider a planar isosceles trapezoid with side lengths \ts $(1, 1, 1, a)$.
Its circumradius is \ts $\frac {1} {\sqrt{3-a}}$. Take a line through the
circumcenter orthogonal to the plane containing the trapezoid and choose
two points on this line with distance $1$ from all vertices of the trapezoid
(see Figure~\ref{f:rhombi}).
Connecting these two points to the vertices, we obtain six unit triangles
and a unit rhombus $\rho_1$ spanned by them. The diagonals of $\rho_1$
are~$a$ and $c_1=2 \sqrt{\frac {2-a} {3-a}}$.

\begin{figure}[hbt]
 \begin{center}
   \includegraphics[height=2.6cm]{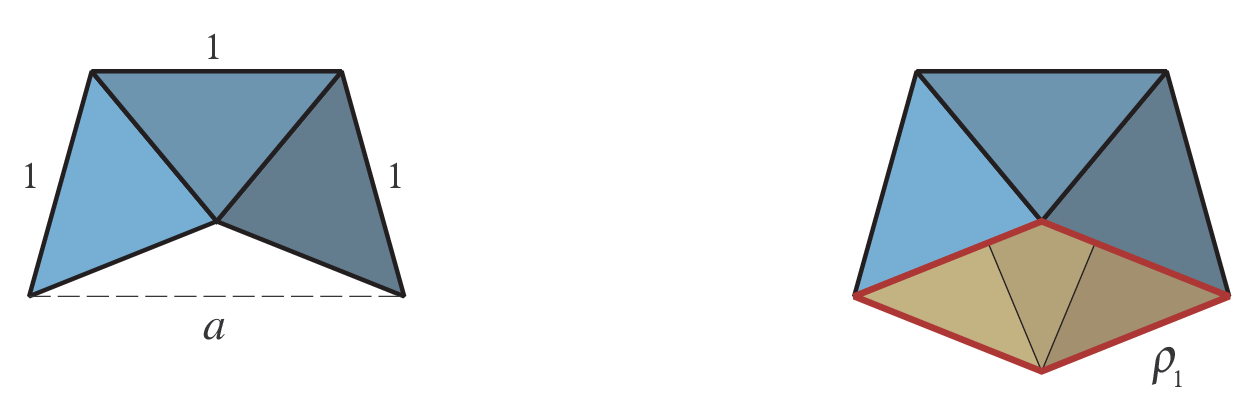}
   \vskip-.6cm \
   \caption{Construction of unit rhombus~$\rho_1$ in the proof of Lemma~\ref{l:rhombus}.}
   \label{f:rhombi}
 \end{center}
\end{figure}

We glue two copies of~$\rho_1$, with a common diagonal of length~$a$, via two of their sides. Define $\rho_2=\rho(a,c_2)$ to be a rhombus obtained as a boundary of the two glued copies of~$\rho_1$. Similarly, define $\rho_3=\rho(a,c_3)$ by gluing three copies of $\rho_1$, etc.  Clearly, every rhombus \ts $\rho_m$, $m\ge 1$, can be domed by surface with $6m$ unit triangles. We have:
$$
c_m \. = \,
\sqrt{4-a^2} \.\cdot \. |\sin \ts m\ts\al|\ts, \quad \text{where}
\quad \al \ts := \ts \arcsin \. \frac{2}{\sqrt{(2+a)(3-a)}}\ts.
$$
Thus, set \ts $\{c_m, m\ge 1\}$ \ts is dense in  \ts $\bigl[0,\sqrt{4-a^2}\bigr]$,
for all \ts $\al \notin \pi\mathbb{Q}$. Finally, we have \ts
$\al \notin \pi\mathbb{Q}$, since otherwise \ts
$\sin \al \ts = \ts \frac{2}{\sqrt{(2+a)(3-a)}}\ts\in \overline{\qqq}$,
a contradiction with the assumption that \ts $a \notin \overline{\qqq}$.
\end{proof}

\smallskip

Let $\cD_4'$ denote the set of unit rhombi that can be domed using the
construction from Lemma~\ref{l:rhombus}.
The integer~$m$ in the proof will be called a \emph{multiplier}
throughout this section.
We can now prove Theorem~\ref{t:dense} for \ts $|\ga|=4$. Let
\begin{equation}\label{eq:cx}
\cx \, := \, \left\{\ x>0\.{}:\.{}\arcsin \ts \frac{2}{\sqrt{(2+x)(3-x)}} \. \in \. \pi\qqq\, \right\}.
\end{equation}

\medskip

\begin{lemma}\label{l:rhombus-dense}
Let \ts $\rho=\rho(a,b)\ssu \rr^3$ \ts be a unit rhombus, and let $\ve>0$.
Then there is a unit rhombus \ts $\rho'=\rho(a',b') \in \cD_4$, such that
\ts $|\rho,\rho'|_F < \ve$. Moreover, if \ts
$a\notin \cx$, one can take \ts $a'=a$.
\end{lemma}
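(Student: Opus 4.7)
The plan is to deduce this from Lemma \ref{l:rhombus} in two steps: when the diagonal $a$ avoids the countable ``bad'' set $\cx$, invoke Lemma \ref{l:rhombus} directly; when $a \in \cx$, first perturb $a$ off of $\cx$, then apply the first step. The continuous dependence of a unit rhombus on its diagonals then translates both perturbations into small Fr\'echet distances.

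For the moreover part, suppose $a \notin \cx$, so that $\al := \arcsin(4-a^2)^{-1/2}$ is defined and $\notin \pi\qqq$. Rerun the construction from the proof of Lemma \ref{l:rhombus}: glue two unit triangles along a shared edge to form $\rho_1 = \rho(a, 1)$, then iteratively glue two copies of $\rho_m$ along the diagonal of length $a$ to obtain $\rho_{m+1} = \rho(a, c_{m+1}) \in \cD_4$, with closed form $c_m = \sqrt{4-a^2}\ts |\sin(m\al)|$. Irrationality of $\al/\pi$ yields equidistribution of $\{m\al \bmod \pi\}$, hence density of $\{c_m\}$ in $[0, \sqrt{4-a^2}]$. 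Since $b \in [0, \sqrt{4-a^2}]$, pick $m$ with $|c_m - b|$ as small as desired. A continuity argument --- placing the $a$-diagonal along the $x$-axis through the origin and the second diagonal in a fixed halfplane, so that the four vertex coordinates are explicit continuous functions of the second diagonal length --- then gives $|\rho(a,b), \rho(a,c_m)|_F < \ve$ for $m$ chosen appropriately, finishing this case with $a' = a$ and $b' = c_m$.

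For the general case $a \in \cx$, exploit the fact that $\cx$ is a countable subset of $(0, \sqrt{3}]$, being parametrized by rationals $\al/\pi \in \qqq$. Choose $a^* \notin \cx$ with $a^* < a$ and $a - a^* < \eta$, for a small parameter $\eta > 0$ to be fixed. Strictly shrinking $a$ preserves $(a^*)^2 + b^2 < a^2 + b^2 \le 4$, so the rhombus $\rho(a^*, b)$ exists; the continuity argument (now in the first diagonal) yields $|\rho(a, b), \rho(a^*, b)|_F < \ve/2$ once $\eta$ is small enough. Apply the moreover step to $\rho(a^*, b)$ with tolerance $\ve/2$ to obtain $b'$ with $\rho' := \rho(a^*, b') \in \cD_4$ and $|\rho(a^*, b), \rho'|_F < \ve/2$. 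The triangle inequality for the Fr\'echet distance then gives $|\rho, \rho'|_F < \ve$, as required.

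The only non-mechanical point in this plan is the continuity claim --- that a canonical rigid-motion representative of $\rho(a,b)$ depends continuously on $(a,b)$ --- but this is standard once the normalization is specified, since the four vertices become explicit smooth functions of the diagonals. Consequently, no substantial difficulty arises in this reduction; all of the real content sits inside Lemma \ref{l:rhombus}.
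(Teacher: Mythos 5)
Your proposal is correct and follows essentially the same route as the paper's (very terse) proof: when $a\notin\cx$ the construction of Lemma~\ref{l:rhombus} applies verbatim since $a\notin\cx$ is exactly the condition $\al\notin\pi\qqq$ needed for density of $\{c_m\}$, and otherwise one perturbs $a$ off the countable set $\cx$ and reduces to the first case. The extra details you supply (continuity of the vertex coordinates in the diagonals, the choice $a^*<a$ to keep $(a^*)^2+b^2\le 4$, the triangle inequality for the Fr\'echet distance) are exactly the routine points the paper leaves implicit.
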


\begin{proof} The second part follows from the above proof
of Lemma~\ref{l:rhombus}. For the first part, choose \ts
$a\notin\cx$, so that \ts $|a-a'|<\ve$. Then apply the
construction as above.
\end{proof}

\medskip

\subsection{Reachable curves}\label{ss:reach-curves}
Let us introduce some definitions and notation. Consider
two integral curves \ts $\ga=[v_1\ldots v_k \ldots v_n]$ \ts
and \ts $\ga'=[v_1\ldots v_{k-1} v_k' v_{k+1} \ldots v_n]$, such that
\ts $[v_{k-1}v_kv_{k+1}v_k'] \in \cD_4'$.
In this case we say that $\ga$ and~$\ga'$ are \emph{$k$-flip
connected}, or just \emph{flip connected};
write $\ga\to_k \ga'$.  Two integral curves \ts $\ga=[v_1\ldots v_n]$ \ts
and \ts $\ga'=[v_1'\ldots v_n']$ \ts are called \emph{flip equivalent},
write $\ga\sim\ga'$, if
\begin{equation}\label{eq:flip-seq}
\ga\ts = \ts \ga_0\ts  \to_{k_1} \ts \ga_1  \ts \to_{k_2} \ts \ga_2 \ts \to_{k_3} \. \ldots \ts \to_{k_N} \ts
\ga_N \ts = \ts  \ga'\ts,
\end{equation}
for some integer sequence \ts $\bk :=(k_1,\ldots,k_N)$, where \ts $1\le k_i \le N$ \ts
for all \ts $i=1,\ldots, N$. See an example in Figure~\ref{f:flips}.  Clearly, if $\ga\sim\ga'$ and $\ga'\sim\ga''$,
then $\ga\sim \ga''$.

\begin{figure}[hbt]
 \begin{center}
   \includegraphics[height=2.3cm]{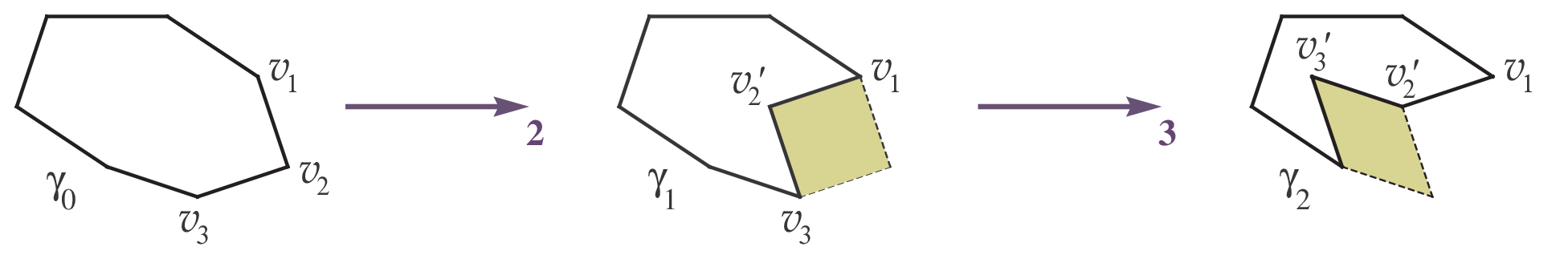}
   \vskip-.6cm \
   \caption{Sequence of two flips \ts $\ga_0\to_2\ga_1\to_3 \ga_2$.}
   \label{f:flips}
 \end{center}
\end{figure}

We say that an integral curve \ts $\ga\ssu \rr^3$ \ts of length~$n$ is \ts \emph{reachable},
if for all $\ve>0$, there is an integral curve \ts $\ga'\ssu \rr^3$ \ts
of length~$n$, such that \ts $|\ga,\ga'|_F<\ve$, and $\ga'\in \cD_n$.
In this notation, Theorem~\ref{t:dense} claims that all integral curves are
reachable, while Lemma~\ref{l:rhombus-dense} proves this for curves of length~$4$.

\begin{lemma}\label{l:generic-flip-equiv}
Let $\ga \sim \ga'$ are flip equivalent integral curves in $\rr^3$.
Suppose $\ga$ is reachable.  Then so is $\ga'$.
\end{lemma}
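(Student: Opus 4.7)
The plan is to iterate along the flip sequence~\eqref{eq:flip-seq}: if we establish the single-flip case $\ga\to_k\ga'\Longrightarrow(\ga\text{ reachable}\Longrightarrow\ga'\text{ reachable})$, then induction on the length $m$ of the flip sequence yields the lemma. So assume $\ga\to_k\ga'$ with unit rhombus $\rho:=[v_{k-1}v_kv_{k+1}v_k']\in\cD_4$, fix $\ve>0$, and aim to produce an integral curve $\ga''\in\cD_n$ with $|\ga',\ga''|_F<\ve$.

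Choose $\eta>0$ small (to be fixed at the end in terms of $\ve$). By reachability of $\ga$, pick $\wt\ga=[\wt v_1,\ldots,\wt v_n]\in\cD_n$ with $|\ga,\wt\ga|_F<\eta$. For $\eta$ small enough, $\wt\ga$ and $\ga$ share the same edge-length sequence (edge lengths are integers and vary continuously), so $|\wt v_{k-1}\wt v_k|=|\wt v_k\wt v_{k+1}|=1$. Set $a:=|v_{k-1}v_{k+1}|$, $b:=|v_kv_k'|$, and $\wt a:=|\wt v_{k-1}\wt v_{k+1}|$, with $|\wt a-a|=O(\eta)$. Assuming for the moment that $\wt a\notin\cx$, the ``moreover'' part of Lemma~\ref{l:rhombus-dense} applied to the unit rhombus with diagonals $(\wt a,b)$ yields $\wt b$ arbitrarily close to $b$ with $\rho(\wt a,\wt b)\in\cD_4$. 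Realize $\rho(\wt a,\wt b)$ in $\rr^3$ with its $\wt a$-diagonal along $[\wt v_{k-1},\wt v_{k+1}]$ and one endpoint of its $\wt b$-diagonal at $\wt v_k$; the opposite endpoint, chosen on the side of $v_k'$, is a point $\wt v_k''$ with $|\wt v_k'',v_k'|\to 0$ as $\eta\to 0$ by continuity of the rhombus shape in its diagonals. Define $\wt\ga':=[\wt v_1,\ldots,\wt v_{k-1},\wt v_k'',\wt v_{k+1},\ldots,\wt v_n]$; this is an integral curve within Fr\'echet distance $\ve$ of $\ga'$. A dome over $\wt\ga'$ is assembled by gluing the dome over $\wt\ga$ to the dome over $\rho(\wt a,\wt b)$ along their common path $\wt v_{k-1}\wt v_k\wt v_{k+1}$, so $\wt\ga'\in\cD_n$, as required.

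The principal obstacle is the genericity condition $\wt a\notin\cx$, which is precisely what allows the rhombus $\rho(\wt a,\wt b)$ to be positioned in $\rr^3$ with its first diagonal matching $[\wt v_{k-1},\wt v_{k+1}]$ exactly. Since $\cx$ is countable, this condition is generic; to enforce it rigorously, one may apply a small preliminary flip inside $\wt\ga$ near $\wt v_{k-1}$ or $\wt v_{k+1}$, attaching a near-degenerate rhombus in $\cD_4$, which shifts $\wt a$ by an arbitrarily small amount while preserving $\wt\ga\in\cD_n$ and the Fr\'echet bound. A generic such perturbation lands outside the countable set $\cx$, completing the argument.
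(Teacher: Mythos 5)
Your overall strategy---approximate $\ga$ by $\wt\ga\in\cD_n$ and then flip the approximation---is the same as the paper's: there the whole flip sequence is packaged as a single continuous map $\Phi_{\bk,\bm}:\cM_n\to\cM_n$ that preserves $\cD_n$ and sends $\ga$ to $\ga'$, and this map is applied to an approximating sequence; your single-flip case plus induction is that argument unrolled. The gap is in how you produce a domeable rhombus over the perturbed base $[\wt v_{k-1}\wt v_k\wt v_{k+1}]$. Routing this through Lemma~\ref{l:rhombus-dense} forces the condition $\wt a\notin\cx$, and your remedy when $\wt a\in\cx$ does not work as stated. The curves obtainable from $\wt\ga$ by a ``small preliminary flip'' form a countable set (each flip attaches one of countably many rhombi in $\cD_4$ at one of finitely many positions), so the resulting values of $\wt a$ form a countable set, which can perfectly well be entirely contained in the countable set $\cx$. ``A generic perturbation avoids $\cx$'' is an argument about a continuum of available perturbations; here you have only countably many, and nothing rules out that all of them land in $\cx$. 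The fix is also circular: to make the preliminary flip small you need a near-degenerate rhombus in $\cD_4$ over the base $[\wt v_{k-2}\wt v_{k-1}\wt v_k]$, which by your own mechanism requires $|\wt v_{k-2}\wt v_k|\notin\cx$, the very kind of condition you are trying to enforce. Note also that nothing guarantees the approximants $\wt\ga$ furnished by reachability ever satisfy $\wt a\notin\cx$ in the first place.

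The genericity issue is avoidable, and the paper's proof shows how: record the flip $\ga\to_k\ga'$ by its multiplier $m$, i.e.\ the number of triangle pairs in the fan dome of $\rho(a,b)$ from Lemma~\ref{l:rhombus}, so that $b=c_m(a)=\sqrt{4-a^2}\,|\sin m\al|$. Applying the \emph{same} multiplier to $\wt\ga$ attaches the rhombus $\rho\bigl(\wt a, c_m(\wt a)\bigr)$, which is a fan of $2m$ unit triangles and hence lies in $\cD_4$ for \emph{every} value of $\wt a$, while $c_m(\wt a)\to b$ by continuity. No density statement, and hence no condition $\wt a\notin\cx$, is needed. (This reads each flip in~\eqref{eq:flip-seq} as a fan flip, which is how the paper encodes flip sequences and is all that is used later.) The rest of your argument---matching edge lengths for small $\eta$, choosing the fourth vertex on the correct side, and gluing the two domes along $\wt v_{k-1}\wt v_k\wt v_{k+1}$---is correct.
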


In other words, the lemma says that if $\ga\in \cM_n$ is a limit point
of~$\cD_n$, then so are all flip equivalent curves $\ga'\sim \ga$.

\begin{proof}
Since $\ga \sim \ga'$, there is a
\emph{flip sequence}~$\bk$ as in~\eqref{eq:flip-seq}, and a sequence of
multipliers \ts
$\bm = (m_1,\ldots,m_N)\in \zz^m$ \ts counting how many pairs of
unit triangles are added at each flip.  Use positive and negative
integers~$m_i$ to denote clockwise or counterclockwise direction
of the flip $\ga_{i-1}\to \ga_{i}$.
A flip consists of attaching sequentially $m$ sets of six triangles.
Assume the first rhombus in a sequence spanned by six unit triangles
is \ts $[v_{k-1} v_k  v_{k+1} v']$. We say that the direction of the flip
is positive if the determinant of the matrix \ts $(v_{k-1}-v_k; v'-v_k; v_{k+1}-v_k)$ \ts
is positive. Otherwise, we say the direction of the flip is negative.
Thus, pair $(\bk,\bm)$ uniquely
encodes the combinatorial structure of the flip equivalence. Let
\begin{equation}\label{eq:Phi-def}
\Phi_{\bk,\bm}: \cM_n \to \cM_n
\end{equation}
be the map defining flip sequence as above. By construction, \ts
$\Phi_{\bk,\bm}: \cD_n \to \cD_n$, and \ts $\Phi_{\bk,\bm}(\ga) = \ga'$.

Clearly, the map \ts $\Phi_{\bk,\bm}$ \ts is a composition of \ts $|m_1|+\ldots +|m_N|$
continuous maps, and thus also continuous on~$\cM_n$.  Since $\ga$ is
reachable, there is a sequence \ts $\bigl\{\ga^{\<t\>} \to \ga, \ts t\in \nn\bigr\}$ \ts
of converging curves $\ga^{\<t\>} \in\cD_n$.  Thus, we have another sequence of
converging curves in~$\cD_n$:
$$\bigl\{\Phi_{\bk,\bm}\bigl(\ga^{\<t\>}\bigr) \to \ga', \. t\in \nn\bigr\}\ts,
$$
which shows that $\ga'$ is reachable.
\end{proof}

\medskip

\subsection{Generic curves}\label{ss:generic-curves}
Let \ts $\ga= [v_1\ldots v_n]$ \ts be an integral curve in~$\rr^3$.  Following~\cite{CSW,Sab1}
(see also~\cite[$\S$34]{Pak}), the distances \ts $|v_iv_{i+2}|$ \ts are called
\emph{small diagonals}.  Here and below, we have vertex indices \ts $1\le i \le n$,
and $(i+2)$ is taken modulo~$n$.

We say that an integral curve \ts $\ga\ssu \rr^3$ is \ts \emph{generic},
if for all flip equivalent curves $\ga' \sim \ga$, where \ts $\ga' = [v_1'\ldots v_n']$,
we have small diagonals \ts $|v_i'v_{i+2}'|$ \ts are not in~$\overline{\qqq}$.
Denote by $\cG_n\ssu \cM_n$ the set of generic integral curves of length~$n$.
By definition, if a curve is generic then its flip equivalent curve is also generic.

\smallskip

\begin{lemma}\label{l:dense-generic}
Set \ts $\cG_n$ \ts is dense in \ts $\cM_n$.
\end{lemma}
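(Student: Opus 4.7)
The plan is to stratify $\cM_n$ by edge-length sequence and then run a standard Baire category argument within each stratum. First I would partition $\cM_n = \bigsqcup_{\vec a} \cM(\vec a)$, indexed by the finitely many sequences $\vec a = (a_1,\ldots,a_n) \in \nn^n$ with $\sum a_i = n$; since edge lengths are integers each $\cM(\vec a)$ is both open and closed in $\cM_n$, so density of $\cG_n$ in $\cM_n$ reduces to density of $\cG_n \cap \cM(\vec a)$ in every stratum.

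Next I would endow each $\cM(\vec a)$ with its natural structure as a compact real algebraic variety defined over $\qqq$: it is the $\mathrm{SE}(3)$-quotient of $\{(v_1,\ldots,v_n)\in(\rr^3)^n : |v_iv_{i+1}|^2 = a_i^2\}$, and every squared diagonal $|v_iv_j|^2$ (for non-adjacent $i,j$) descends to a $\qqq$-polynomial function on $\cM(\vec a)$. Fix a target curve $\ga_0 \in \cM(\vec a)$ and let $\cM^*(\vec a)$ denote the irreducible $\qqq$-component containing $\ga_0$.

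The core step is then Baire category. For each nonzero $P \in \qqq[x_{ij}]$ that does not vanish identically on $\cM^*(\vec a)$, the zero locus $Z_P = \{\ga\in\cM^*(\vec a) : P(|v_iv_j|^2) = 0\}$ is a proper real algebraic subvariety, hence closed and nowhere dense in the Fr\'echet topology. Since $\qqq[x_{ij}]$ is countable, $\bigcup_P Z_P$ is meager in the compact metric space $\cM^*(\vec a)$, and its complement, consisting of exactly those curves whose diagonals satisfy no ``accidental'' $\qqq$-polynomial relation, is a dense $G_\delta$ accumulating on $\ga_0$.

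The hard part will be a definitional subtlety. For $n \ge 5$, the inequality $\binom{n}{2} - n > 2n - 6$ forces the diagonals to satisfy nontrivial $\qqq$-polynomial relations identically on $\cM(\vec a)$ (coming for instance from Cayley--Menger-type embeddability constraints into $\rr^3$), so the literal condition ``all diagonals algebraically independent over $\qqq$'' would be vacuous. I would read the generic condition in the algebro-geometric sense: $\ga$ is a generic point of its irreducible $\qqq$-component of $\cM(\vec a)$, satisfying only those $\qqq$-polynomial relations that hold identically on that component. With this reading in place, the Baire argument above directly produces approximating generic curves to every $\ga_0 \in \cM(\vec a)$, and hence $\cG_n$ is dense in $\cM_n$.
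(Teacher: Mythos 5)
Your route is genuinely different from the paper's. The paper argues by explicit perturbation: it moves the vertices one at a time, sliding each $v_k$ a distance less than $\ve$ along the circle \ts $\{x : |v_{k-1}'x|=a_{k-1},\ |xv_{k+1}|=a_k\}$ \ts so as to introduce new transcendentals at each step, with a separate rotation argument needed first to destroy collinear triples \ts $|v_{i-1}v_{i+1}|=2$ \ts (where that circle degenerates to a point and no perturbation of $v_i$ alone is possible). You instead work globally on the configuration variety and run a Baire category argument over the countably many rational polynomial conditions. Both are standard genericity arguments, but your version pushes the difficulty into the claim that each $Z_P$ is nowhere dense: for \emph{real} algebraic varieties a proper Zariski-closed subset of an irreducible variety can contain a Euclidean neighborhood of a point where the real locus drops dimension (Whitney-umbrella--type flaps), and the degenerate curves that the paper treats by hand are exactly the candidates for such points. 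So you would still need to show that every $\ga_0$ is a limit of points of the top-dimensional stratum of its component --- which is essentially what the paper's explicit perturbation accomplishes, and which your argument currently assumes.

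Your ``definitional subtlety'' is not a subtlety but a correct and substantive observation. For $n\ge 5$ the number of diagonals \ts $n(n-3)/2$ \ts exceeds \ts $2n-6=\dim\cM(\vec a)$, and the Cayley--Menger determinant of any five vertices, with the integer edge lengths substituted, is a nonzero rational polynomial that vanishes on the squared diagonals of \emph{every} curve realized in $\rr^3$. Hence under the literal definition in~$\S$\ref{ss:generic-curves} one has $\cG_n=\emptyset$ for $n\ge 5$, and the closing sentence of the paper's proof (``the resulting curve has algebraically independent diagonals'') cannot be taken at face value either. Your reading --- $\ga$ is a generic point of its component, satisfying no rational relations beyond those holding identically --- is the right repair, and it is the property actually used downstream: it makes each individual diagonal transcendental (as required to invoke Lemma~\ref{l:rhombus}) and keeps Lemma~\ref{l:flip-generic} intact. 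The corrected definition should be stated explicitly, since it changes the meaning of $\cG_n$ throughout Section~\ref{s:dense}.
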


\begin{proof}
A neighborhood of a given curve $\gamma$ in $\cM_n$ is a semi-algebraic set of dimension $2n$. Assume a concrete small diagonal $|v_k'v_{k+2}'|$ of a curve $\ga'$ is a fixed number from $\overline{\qqq}$. The space of such curves in the neighborhood of $\ga$ is a semi-algebraic set of dimension at most $2n-1$. For a given sequence of flips, the space of curves produced from $\gamma'$ with this sequence of flips is at most $(2n-1)$-dimensional as well. Since both $\overline{\qqq}$ and the set of flip sequences are countable, the set of non-generic curves in the neighborhood of $\ga$ is a union of countably many semi-algebraic sets of dimension at most $2n-1$. Therefore, there is a curve from $\cG_n$ in an arbitrary neighborhood of $\ga$.
\end{proof}

\medskip

\subsection{Planar curves}\label{ss:planar-curves}
An integral curve \ts $\ga\in \cM_n$ \ts is called \emph{planar}
if it lies in a plane \ts $H\ssu \rr^3$.  Denote by \ts $\cP_n\ssu \cM_n$
\ts the set of planar integral curves of length~$n$.

\begin{lemma}\label{l:planar-dense}
For the flip equivalence class of every \ts $\ga\in \cG_n$, there is a planar curve \ts $\xi\in \cP_n$ \ts that is its limit point.
\end{lemma}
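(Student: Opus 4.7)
The plan is to construct, for a generic $\ga=[v_1\ldots v_n]\in\cG_n$ and any $\ve>0$, an explicit flip sequence ending at a curve all of whose vertices lie within Euclidean distance $\ve$ of a fixed plane $H$ containing the edge $v_1v_2$ (obtained after a rigid motion). By Lemma~\ref{l:flip-generic}, the entire flip equivalence class of $\ga$ consists of generic curves, so every axis diagonal $|v_{k-1}v_{k+1}|$ is transcendental over $\qqq$ and in particular avoids the exceptional set $\cx$. Hence by Lemma~\ref{l:rhombus-dense}, each flip at $v_k$ can be chosen to approximate, within arbitrary precision, any prescribed target on the rotation circle $C_k$ obtained by rotating $v_k$ about the axis $v_{k-1}v_{k+1}$.

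The algorithm proceeds in \emph{passes}. In each pass, for $k=3,4,\ldots,n-1$, we flip $v_k$ to the point on $C_k$ minimizing $d(v_k,H)$: when $C_k\cap H\neq\emp$, this places $v_k$ in $H$; otherwise, it reduces $d(v_k,H)$ as much as possible. A direct computation using $|v_{k-1}v_k|=|v_kv_{k+1}|=1$ shows that when $v_{k-1}\in H$, $a=|v_{k-1}v_{k+1}|$, and $D=d(v_{k+1},H)$, the minimum height attainable on $C_k$ is
\[
\max\bigl\{0,\ D/2-\sqrt{(1-a^2/4)(1-D^2/a^2)}\bigr\},
\]
which vanishes whenever $D^2\le a^2(4-a^2)/4$. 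Each pass concludes with a flip at $v_n$ around the axis $v_{n-1}v_1$: once $v_{n-1}$ has been brought into $H$, this axis lies in $H$, so $C_n$ meets $H$ in two points and $v_n$ is also placed there.

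The main obstacle is the quantitative convergence analysis: starting from $\ga$ with potentially large vertex heights, one must show that iterating the passes drives the maximum height $D_t=\max_k d(v_k,H)$ to zero. Since the flip at $v_k$ depends on the current positions of $v_{k-1}$ and $v_{k+1}$, which themselves shift across passes, the updates to the heights are tightly coupled along the curve. Genericity is essential: Lemma~\ref{l:flip-generic} keeps every axis diagonal bounded away from the degenerate values $0$ and $2$, uniformly bounding below the reduction factor $\sqrt{1-a^2/4}$ and preventing stall configurations in which the flip locus becomes tangent to $H$. Making this iterative reduction rigorous, and verifying that the limit configuration is a bona fide planar integral curve $\xi\in\cP_n$ realized as a limit point of the flip class, constitutes the main technical work.
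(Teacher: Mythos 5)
Your strategy (fix a plane $H$ through $v_1v_2$ and greedily flip each $v_k$ to the lowest attainable point of its rotation circle) differs from the paper's, which instead takes a \emph{generic} linear functional $\vp:\rr^3\to\rr$ and, cycling through $k=1,\ldots,n,1,\ldots$, uses the multiplier freedom of Lemma~\ref{l:rhombus} to place each new vertex so that $\vp(w_k')$ lands in the open middle third of the interval between $\vp(w_{k-1})$ and $\vp(w_{k+1})$. That target is always reachable for a trivial reason your scheme does not exploit: the rotation circle is centered at the midpoint of $w_{k-1}w_{k+1}$, so the attainable (dense) set of $\vp$-values is an interval centered exactly at $\tfrac12(\vp(w_{k-1})+\vp(w_{k+1}))$, which lies in the middle third whenever the two neighbor values differ (guaranteed by genericity of $\vp$). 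The contraction of the span of $\vp$-values is then immediate, no matter how small the circle's radius or how it is tilted.

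The genuine gap in your write-up is that the convergence argument is not carried out --- you state yourself that driving $D_t=\max_k d(v_k,H)$ to zero ``constitutes the main technical work,'' which is precisely the content of the lemma. Moreover, the one justification you offer for why the iteration cannot stall is incorrect: Lemma~\ref{l:flip-generic} guarantees that every curve in the flip class has diagonals that are transcendental, hence individually distinct from $0$ and $2$, but algebraic independence gives no \emph{uniform} lower bound on $\sqrt{1-a^2/4}$ along an infinite flip sequence; the diagonals may well degenerate toward $0$ or $2$ as the curve flattens (the limit planar curve need not be generic, as the paper notes after the lemma statement). Your scheme can still be salvaged, but not via that argument: since your formula gives new height at most $\tfrac12\bigl(d(v_{k-1},H)+d(v_{k+1},H)\bigr)$ regardless of $a$ and of the tilt term $\sqrt{1-D^2/a^2}$, and since $v_1,v_2$ stay in $H$, a left-to-right pass yields $M_{t+1}\le (1-2^{-(n-2)})\ts M_t$ plus controllable approximation errors from the density-only attainability; one then extracts a convergent subsequence in the compact space $\cM_n$ whose limit is the desired $\xi\in\cP_n$. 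As submitted, however, neither this estimate nor any substitute for it appears, so the proof is incomplete at its central step.
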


In other words, for every $\ve>0$, and every generic integral
curve \ts $\ga \in \cG_n$, there is a generic integral
curve \ts $\ga'\in \cG_n$ and a planar integral curve
\ts $\xi\in \cP_n$, such that \ts $\ga \sim \ga'$ \ts
and \ts $|\ga',\xi|_F<\ve$. Note that the curve $\xi$ does
not have to be generic itself, or be flip equivalent to~$\ga$.

\begin{proof}  The proof is based on the same idea of using
flips to obtain a near-planar curve~$\ga'$.  Let \ts
$\ga=[v_1\ldots v_n] \in \cG_n$, and let \ts
$\vp:\rr^3\to \rr$ \ts be a generic linear function,
i.e.\ such that for any curve flip equivalent to $\gamma$,
values $\vp$ are different on all its vertices. Generic linear
functions exist because there are only countably many curves
flip equivalent to a given one.
Cyclically, for all~$k$ from~$1$ to~$n$,
make $k$-flips:
\begin{equation}\label{eq:circledust}
\ga=\ga_0 \ts \to_1 \ts \ga_1 \ts \to_2 \ts \ga_2 \ts \to_3 \.
\ldots \.
\to_n \ts \ga_n \ts \to_1 \ts \ga_{n+1} \ts \to_2 \ts \ga_{n+2} \ts \to_3 \. \ldots
\end{equation}
Choose integers~$\bm$ (see the proof of Lemma~\ref{l:generic-flip-equiv}),
as follows.  Consider a flip\ts
$$
\ga_{j n+k-1} \ts = \ts [\ldots w_{k-1}w_kw_{k+1} \ldots ] \. \to_{k} \ts \ga_{j n+k}
\ts = \ts [\ldots w_{k-1}w_k'w_{k+1} \ldots ]\..
$$
By the proof of Lemma~\ref{l:rhombus}, we can always choose a multiplier $m_{j n+k}$ so that
\begin{equation}\label{eq:planar}
\frac23\ts\al \ts + \ts \frac13\ts\be \. < \.
\vp(w_k')  \. < \. \frac13\ts\al \ts + \ts \frac23\ts\be\ts,
\end{equation}
where
$$\al \ts :=\ts \min \bigl\{\vp(w_{k-1}), \ts \vp(w_{k+1})\bigr\},
\ \ \ \be \ts := \ts\max \bigl\{\vp(w_{k-1}), \ts \vp(w_{k+1})\bigr\}.
$$
Note that we have \ts $\al\ne\be$, since~$\vp$ is generic,
so there is always room to make such flip possible.

Using~\eqref{eq:planar}, it is easy to see that there is a limit \ts
$\bigl(\vp(w_1),\ldots,\vp(w_n)\bigr) \to (h,\ldots,h)$, for some \ts $h\in \rr$,
Here the limit is when $N \to \infty$, where $N$ is the number of flips
in~\eqref{eq:circledust}.
Indeed, note that \ts $\max_k \vp (w_k)$ \ts is non-increasing and thus
converges to some~$M<\infty$.  Similarly, note that \ts $\min_k \vp(w_k)$ \ts
is non-decreasing and thus converges to some $\mu\le M$.  If \ts $\mu=M$,
then we can take $h=M=\mu$. Therefore, it remains to show that \ts $\mu<M$
\ts is impossible.

Since \ts $\max_k \vp (w_k)\rightarrow M$, there is a moment when all values of $\vp(w_k)$ are smaller than $M+\delta$ for a given~$\delta$. On the other hand there is $\ell$ such that $\vp(w_\ell)\neq \mu$. Let us consider the next $n$ flips starting from the vertex $\ell+1$. By the construction of the flip sequence, we have \.
$\vp(w_{\ell+1}')<\frac{1}{3} \ts \mu \ts + \ts \frac{2}{3} (M+\delta)$, then \.
$\vp(w_{\ell+2}')<\frac{1}{9} \ts \mu \ts + \ts  \frac{8}{9} (M+\delta)$, etc.
We conclude that all values of $\vp$ after $n$ flips are no greater than \.
$\frac{1}{3^n} \ts \mu \ts + \ts \frac {3^{n}-1} {3^n} (M+\delta)$. For $\mu<M$,
we can choose \ts $\delta>0$ such that this value is smaller than~$M$.
This contradicts the assumption \ts $\max_k \vp (w_k)\rightarrow M$,
and implies \ts $\mu = M$.

The limit curve~$\xi$ is integral and
lies in the plane \ts $H := \{x\in \rr^3~:~\vp(x)=h\}$.
Therefore, for \ts $N=N(\ve)$ \ts large enough, we obtain a curve \ts $\ga':=\ga_N$,
such that \ts $\ga'\sim\ga$, and \ts $|\ga',\xi|_F<\ve$.
\end{proof}

\medskip

\subsection{Packing curves}\label{ss:dense-packing}
Let \ts $u_1,\ldots,u_n\in \rr^d$ \ts be unit vectors
which satisfy \ts $u_1+\ldots +u_n=0$.  The \emph{Steinitz Lemma}
famously states, see e.g.~\cite{Bar}, that there is always a permutation
\ts $\si \in S_n$, s.t.\
\begin{equation}\label{eq:Stein}
\bigl|u_{\si(1)} \. + \. \ldots  \. + \. u_{\si(k)}\bigr| \. \le \.
B_d \quad \ \, \text{for all} \quad 1\le k \le n\ts,
\end{equation}
where \ts $B_d\le 2d$ \ts is a universal constant which depends only on
the dimension~$d$. Bergstr\"om~\cite{Ber} found the optimal value
\ts $B_2 = \sqrt{5/4}$, see also~$\S$\ref{ss:finrem-stein}.

\smallskip

Motivated by the Steinitz Lemma, we define a similar notion for
integral curves.  Let \ts $\ga=[v_1\ldots v_n]\in \cM_n$ \ts be
an integral curve in~$\rr^3$. We say that $\ga$ is \emph{$B$-packing},
if \ts $|v_1v_i|\le B$ \ts for all \ts $1\le i \le n$.

\begin{lemma}\label{l:packing-32}
Every generic integral curve \ts $\ga \in \cG_n$ \ts is flip equivalent
to a generic integral curve \ts $\ga''\in \cG_n$ \ts that
is \ts $3/2$-packing.
\end{lemma}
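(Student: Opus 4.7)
The plan is to reduce to a near-planar curve via Lemma~\ref{l:planar-dense}, apply the two-dimensional Steinitz Lemma with Bergstr\"om's optimal constant to permute the edges of the planar limit into a $\sqrt{5/4}$-packing, and then realize this permutation on the $3$-dimensional curve through a sequence of flips. Since $\sqrt{5/4}<3/2$, a sufficiently accurate realization is $3/2$-packing.

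The key observation I would first record is that in the exactly planar case the unique non-trivial flip at a vertex~$v_k$ reflects $v_k$ across the diagonal $v_{k-1}v_{k+1}$, sending $v_k$ to $v_k'=v_{k-1}+v_{k+1}-v_k$ by unit-edge symmetry. A direct calculation then yields $v_k'-v_{k-1}=v_{k+1}-v_k$ and $v_{k+1}-v_k'=v_k-v_{k-1}$, so the flip swaps the two unit edges adjacent to~$v_k$. Since adjacent transpositions generate the full symmetric group on the $n$ edges, any edge permutation is accessible in the plane via a finite sequence of flips.

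Carrying this out: given $\ga\in\cG_n$, Lemma~\ref{l:planar-dense} supplies $\ga_0\sim\ga$ with $\ga_0\in\cG_n$ and a planar curve $\xi\in\cP_n$ satisfying $|\ga_0,\xi|_F<\de$. The edges of $\xi$ are unit vectors summing to zero, so the planar Steinitz Lemma with Bergstr\"om constant $B_2=\sqrt{5/4}$ yields a permutation $\si\in S_n$ such that the re-edged polygon $\xi^\si$ is $\sqrt{5/4}$-packing. Writing $\si=\tau_{j_1}\cdots\tau_{j_T}$ as a product of adjacent transpositions, I would apply the corresponding $T$ flips to $\ga_0$. At each step the current curve remains in $\cG_n$ by Lemma~\ref{l:flip-generic}, so the diagonal $a=|v_{k-1}v_{k+1}|$ involved in the next flip lies outside $\overline{\qqq}$ with $a<2$; by Lemma~\ref{l:rhombus} we can choose a multiplier making the rhombus diagonal $b$ arbitrarily close to the planar maximum $\sqrt{4-a^2}$, which places the flipped vertex arbitrarily close to its planar reflection. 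The final curve $\ga':=\ga_T\sim\ga$ therefore lies close to $\xi^\si$ in Fr\'echet distance, and Lemma~\ref{l:flip-generic} gives $\ga'\in\cG_n$.

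The main obstacle is the quantitative control of the accumulated error. The number of transpositions $T$ can be of order~$n^2$; the flip maps $\Phi_{\bk,\bm}$ of Lemma~\ref{l:generic-flip-equiv} are continuous but with moduli depending on how close each intermediate diagonal $a$ is to the degenerate value~$2$; and these intermediate diagonals drift as the sequence progresses. Turning this into a uniform estimate that consumes less than $3/2-\sqrt{5/4}$ of error budget requires choosing $\de$ small enough to keep all intermediate diagonals bounded away from~$2$ (which a mild genericity condition on $\xi$ already guarantees in the limit), and then applying the density in Lemma~\ref{l:rhombus} finely enough at each step.
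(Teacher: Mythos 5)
Your proposal is correct and follows essentially the same route as the paper: reduce to a near-planar curve via Lemma~\ref{l:planar-dense}, apply the Steinitz Lemma with Bergstr\"om's constant $B_2=\sqrt{5/4}$, factor the resulting permutation into adjacent transpositions realized as flips (each of which, in the planar limit, swaps the two adjacent unit edges), and absorb the accumulated error into the gap $3/2-\sqrt{5/4}$. The paper handles the quantitative control you worry about by the same device you suggest, namely choosing multipliers so that every intermediate curve stays within $\ve$ of $\xi$ and then letting $\ve\to 0$ using continuity of the composite flip map $\Phi_{\bk,\bm}$.
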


Here the constant $B=3/2$ is chosen somewhat arbitrary.  In fact,
any constant \ts $\sqrt{5/4} < B < \sqrt{3}$ \ts will satisfy the
lemma and suffice for our purposes.

\begin{proof}
Fix $\ve>0$ and \ts $\ga\in \cG_n$.  Let $\ga'\sim \ga$ and \ts
$\xi=[w_1 \ldots w_n] \in \cP_n$ \ts be as in the proof of
Lemma~\ref{l:planar-dense}, so \ts $|\ga',\xi|<\ve$.
Define \ts $u_i=\overrightarrow{w_iw_{i+1}}$, \ts $1\le i < n$,
and \ts $u_n=\overrightarrow{w_nw_{1}}$.  Clearly, $u_i$ are
unit vectors which satisfy \ts $u_1+\ldots +u_n=0$. By
the Steinitz Lemma, there is a permutation \ts
$\si\in S_n$, s.t.~\eqref{eq:Stein} holds.  Consider a reduced
factorization of $\si$ into adjacent transpositions $(i,i+1)\in S_n$:
$$\si \. = \, (k_\ell,k_{\ell}+1) \. \cdots \. (k_1,k_1+1)\.,
$$
where \ts $1\le k_1,\ldots,k_\ell \le n-1$, and
\ts $\ell=\inv(\si)$ \ts is the number of inversions in~$\si$,
see e.g.~\cite{Sta}. This reduced factorization may not be unique,
of course. Denote a partial permutation \ts
$\si_j= (k_j,k_{j+1}) \. \cdots \. (k_1,k_1+1)$,
and the corresponding planar curve \ts
$\xi_j=[w_{\si_j(1)} \ldots w_{\si_j(n)}]$.

Define a sequence of flips as
in~\eqref{eq:flip-seq}, according to this factorization:
\begin{equation}\label{eq:flip-seq-inv}
\ga'\ts = \ts \ga_0\ts  \to_{k_1} \ts \ga_1  \ts \to_{k_2} \ts
\ga_2 \ts \to_{k_3} \. \ldots \ts \to_{k_\ell} \ts
\ga_\ell \ts = \ts  \ga''\ts.
\end{equation}
Since $\ga_j$ are generic, by Lemma~\ref{l:rhombus} and induction,
we can always choose the multipliers $m_j$ so that \ts
$|\ga_j,\xi_j|<\ve$, \ts $1\le j \le \ell$.

Now let $\ve\to 0$.
As in the proof of Lemma~\ref{l:generic-flip-equiv}, by continuity
of \ts $\Phi_{\bk,\bm}$ \ts in~\eqref{eq:Phi-def}, we have the
limit planar curve \ts
$\ga'' \to \varrho:=[y_1 \ldots y_n]\in \cP_n$.  By induction on
the length~$\ell$ of the factorization, we have:
$\overrightarrow{y_iy_{i+1}}=u_{\si(i)}$, \ts $1\le i < n$,
and \ts $\overrightarrow{y_ny_{1}} =u_{\si(n)}$. By the Steinitz
Lemma with Bergstr\"om constant $B_2=\sqrt{5/4}$,
we conclude that for sufficiently small $\ve>0$,
the integral curve $\ga''$ is $(B_2+\de)$-packing,
for all $\de>0$.  Taking \ts $\de<(3/2-B_2)$, we obtain the result.
\end{proof}

\begin{rem}{\rm
In notation of the proof above, in a special case of a convex centrally
symmetric $n$-gon $\xi\in \cP_n$, \ts $n=2k$, the sequence of unit vectors
is \ts $u_1,\ldots,u_k,-u_1,\ldots,-u_k$.  Take the permutation which gives
the order \ts $u_1,\ldots,u_k,-u_k,\ldots,-u_1$; the corresponding
limit curve $\varrho\in \cP_n$ is then degenerate. For every reduced
factorization as in the proof, the pattern of rhombi used in the flip
sequence then defines a \emph{zonotopal tilings}, see
e.g.~\cite[Exc.~14.25]{Pak}.
}\end{rem}

\medskip

\subsection{Proof of Theorem~\ref{t:dense}}
We note that if the statement of the theorem is true then it holds with
the additional constraint that $v_1=v_1'$, $v_2=v_2'$, and there is a
plane through  $v_1', v_2', v_3'$ containing $v_1, v_2, v_3$.

We prove the result by induction. First, closed integral
curve of length~$3$ is a unit triangle, so Theorem~\ref{t:dense}
is trivially true in this case. The case of length~$4$ is resolved
in Lemma~\ref{l:rhombus-dense}.
Note that by Lemma~\ref{l:dense-generic} it suffices to prove the
theorem only for generic curves \ts \ts $\ga=[v_1\ldots v_n] \in \cG_n$.

\smallskip

Let \ts $n=5$, and let \ts $\ga\in \cG_5$ \ts be a generic integral pentagon.
By Lemma~\ref{l:packing-32}, there is an almost planar curve \ts $\ga'=[w_1,\ldots,w_5] \in \cG_5$,
such that \ts $\ga' \sim \ga$,  and \ts $|w_1w_i| \le 3/2$ \ts
for all \ts $1\le i\le 5$.
If the circumradius of the triangle $w_1w_3w_4$ is less than 1,
then there is a point \ts $z\in \rr^3$, s.t.\ $|w_1z|=|w_3z|=|w_4z|=1$, see
Figure~\ref{f:induction} (left).
Otherwise, we make a flip for the vertex $w_1$ using Lemma~\ref{l:rhombus}
to construct a generic curve $\ga''=[w_1',w_2, \ldots,w_5]$ such that the
circumradius of the triangle $w_1'w_3w_4$ is less than 1 and take a point~$z$,
s.t.\ $|w_1'z|=|w_3z|=|w_4z|=1$. Without loss of generality, we consider
the first case only.

Apply now Lemma~\ref{l:rhombus-dense} to
rhombi \ts $\rho_1=[w_1w_2w_3z]$ \ts and \ts $\rho_2=[w_1w_5w_4z]$,
to obtain rhombi $\rho_1'=[w_1w_2'w_3z] \in \cD_4$ \ts and \ts
$\rho_2'=[w_1w_5'w_4z] \in \cD_4$, which satisfy \ts $|w_2w_2'|, |w_5w_5'|< \ve$.
Attach unit triangle \ts $[w_3w_4z]$ \ts to rhombi~$\rho_1'$ and $\rho_2'$.
This gives the desired pentagon \ts $\eta=[w_1w_2'w_3w_4w_5'] \in \cD_5$,
s.t. $|\ga',\eta|_F<\ve$.  Thus, $\ga'$ is reachable.
By Lemma~\ref{l:generic-flip-equiv}, then so is~$\ga$, as desired.

We also note that for each $\ve>0$ there is $\delta>0$ such that for any angle $\psi\in [\angle w_2w_1w_5-\delta,\angle w_2w_1w_5+\delta]$, there is $\ga^*=[w_1^*w_2^*w_3^*w_4^*w_5^*]\in\cD_5$ satisfying $|\ga',\gamma^*|_F<\ve$ and $\angle w_2^*w_1^*w_5^* = \psi$. Indeed, we can just perturb the construction of $\eta$. Fixing $w_3$, $w_4$, and $z$ and multipliers of the rhombi~$\rho_1'$ and $\rho_2'$ we can move $w_1^*$ in the neighborhood of $w_1$. Then the points $w_2^*$ and $w_5^*$ defined by $w_1^*$ change continuously with respect to the position of $w_1^*$. This means $\angle w_2^*w_1^*w_5^*$ changes continuously as well. It is easy to check that values both smaller and larger than the initial $\angle w_2w_1w_5$ are possible so there is an interval of angles that are possible.

\begin{figure}[hbt]
 \begin{center}
   \includegraphics[height=3.2cm]{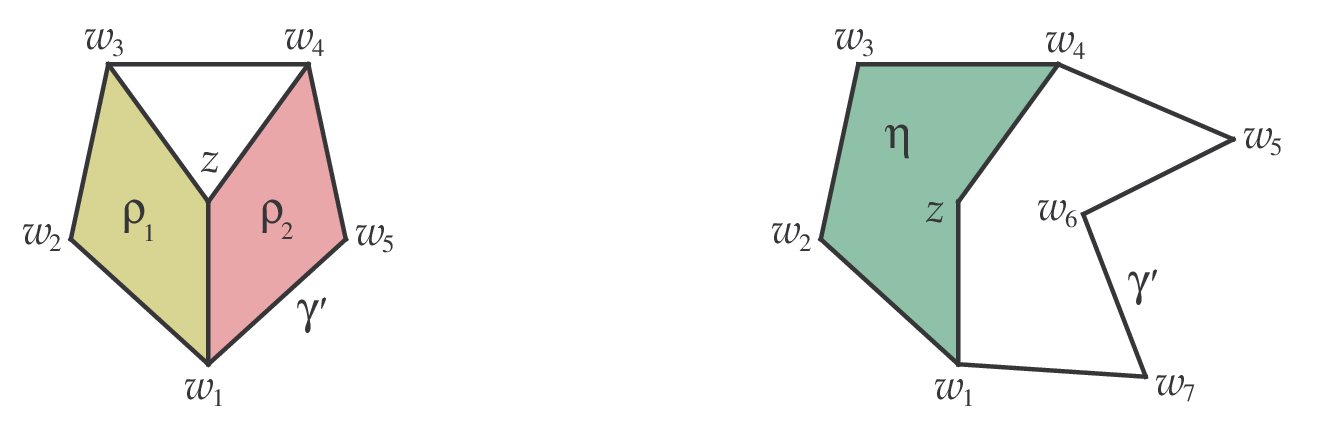}
   \vskip-.6cm \
   \caption{Base of induction $n=5$, and step of induction for $n=7$.}
   \label{f:induction}
 \end{center}
\end{figure}

\smallskip

For \ts $n\ge 6$, we employ a similar argument. Let \ts $\ga\in \cG_n$ \ts be a generic integral curve as above. By Lemma~\ref{l:packing-32}, there is an almost planar \ts $\ga'=[w_1 \ldots w_n] \in \cG_n$, such that \ts $\ga' \sim \ga$, and \ts $|w_1w_i| \le 3/2$ \ts for all \ts $1\le i\le n$. We choose a point $z\in \rr^3$ such that $|w_1z|=|w_4z| = 1$ and the circumradius of the triangle $w_2w_3z$ is less than 1. Then for the curve \ts $[w_1 w_2 w_3 w_4 z]$ \ts we can use the construction from above. We conclude that in a neighborhood of \ts $[w_1 w_2 w_3 w_4 z]$ \ts there are curves \ts $\ga^*\in\cD_5$ \ts whose angles \ts $w_1^*z^*w_4^*$ \ts cover \ts $[\angle w_1zw_4-\delta,\angle w_1zw_4+\delta]$.

Now take $\ve'>0$, such that \ts $|w_1w_1'|, |zz'|, |w_4w_4'|<\ve'$ \ts implies \ts $\angle w_1'z'w_4'\in [\angle w_1zw_4-\delta,\angle w_1zw_4+\delta]$. The integral curve \ts $\ga'=[w_1 z w_4 \ldots w_n]$ has length $n-1$ so, by the induction hypothesis, there is $\ga''=[w_1' z' w_4' \ldots w_n']\in \cD_{n-1}$ such that $|\ga'',\ga'|_F<\ve'$. Since $\angle w_1'z'w_4'\in [\angle w_1zw_4-\delta,\angle w_1zw_4+\delta]$, there is a curve of length five that can be domed in the neighborhood of $[w_1 w_2 w_3 w_4 z]$ with the same angle. Gluing the domes of these two curves, we obtain a dome of the curve in the neighborhood of $\ga$.  This completes
the proof of the reduction and finishes the proof of the theorem.~{\ }~$\sq$

\bigskip

\section{Regular polygons}\label{s:regular}

\subsection{Classical domes}
Denote by $Q_n\ssu\rr^2$ the regular $n$-gon with unit sides in the $xy$-plane with the center at the origin $O$.
From the introduction, there is a trivial dome over $Q_3$ and~$Q_6$, and
domes over $Q_4, Q_5$ are given by regular pyramids.  Less obviously,
a tiling of~$Q_{12}$ given in Figure~\ref{f:7cupola} (left), gives a
natural dome over~$Q_{12}$, when square pyramids are added.

\begin{figure}[hbt]
 \begin{center}
   \includegraphics[height=3.1cm]{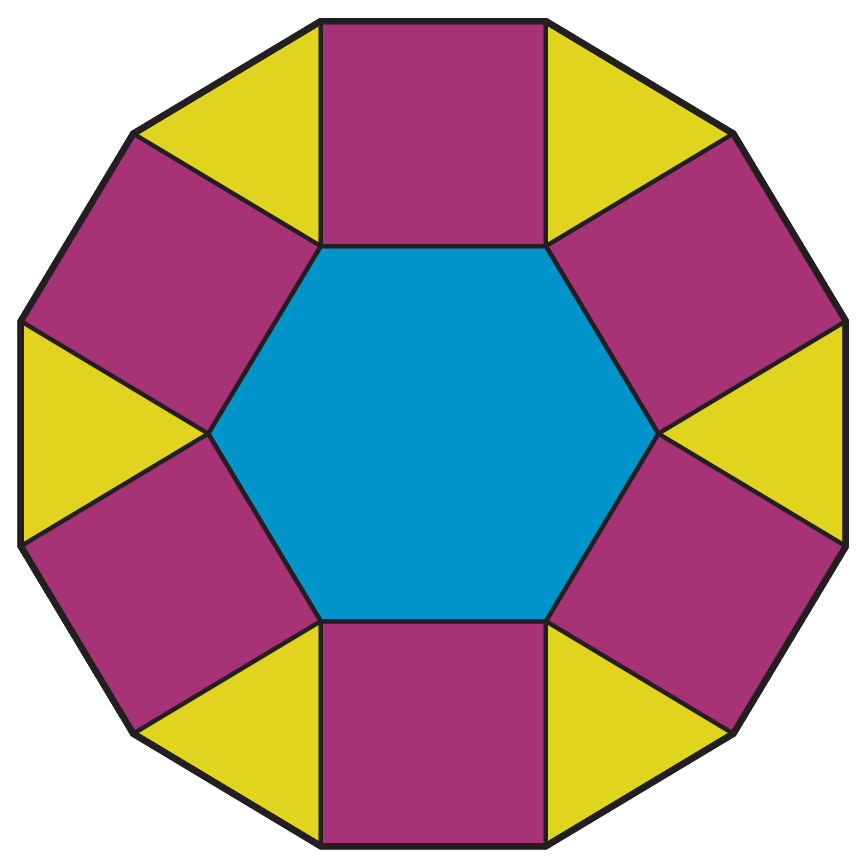} \hskip3.42cm
    \includegraphics[height=3.6cm]{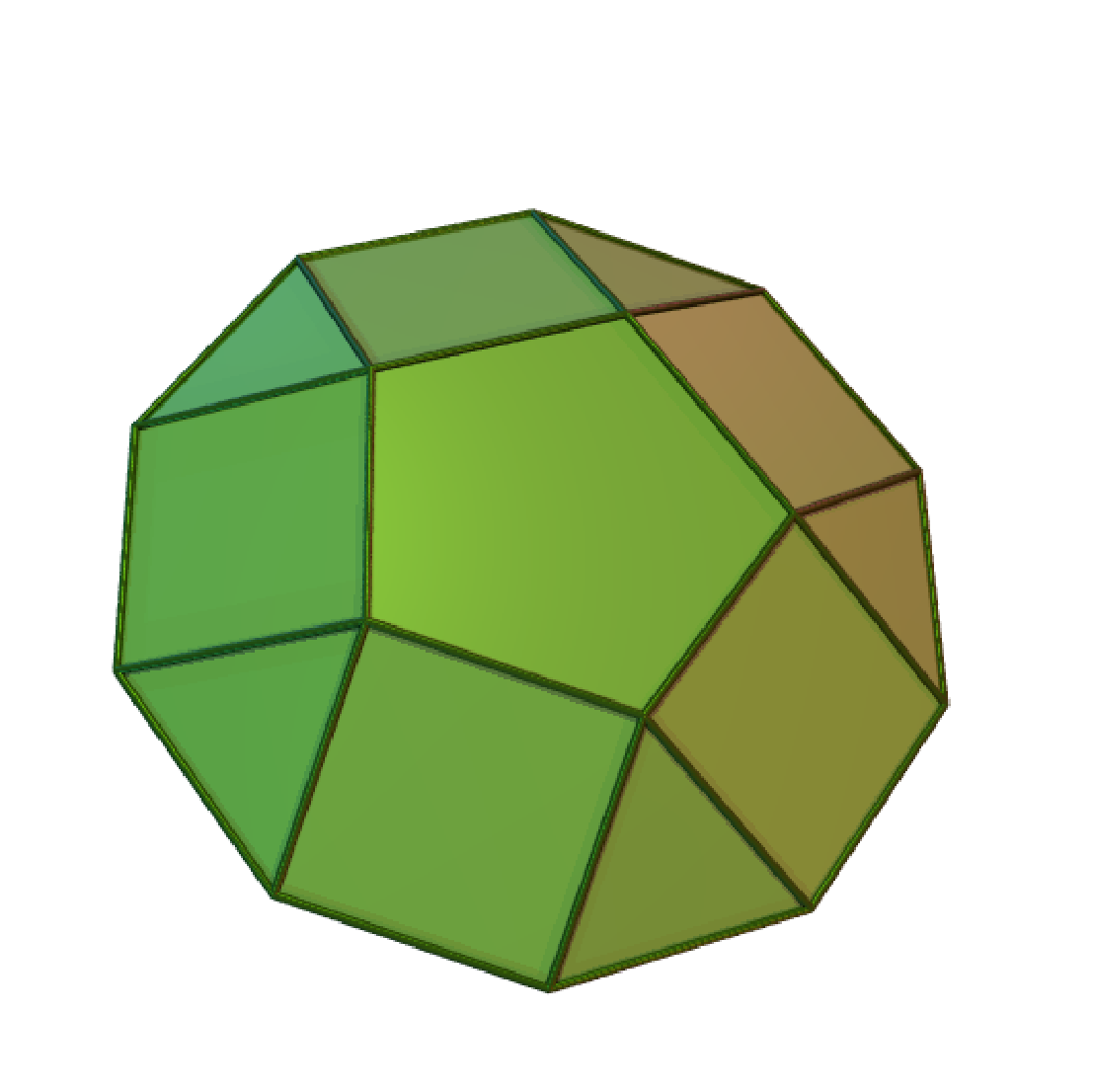}
   \caption{\underline{Left}: Tiling giving a dome over~$Q_{12}$.
   \ts \underline{Right}: Pentagonal cupola giving a dome over~$Q_{10}$. }
   \label{f:7cupola}
 \end{center}
\end{figure}

Similarly,
recall that the regular octagon $Q_8$ and decagon $Q_{10}$ are spanned by
the surfaces of Johnson solids \emph{square cupola} and \emph{pentagonal cupola},
respectively, see Figure~\ref{f:7cupola} (right) and~\cite{Joh} for details.\footnote{The
image on the \href{https://commons.wikimedia.org/wiki/File:Pentagonal_cupola.png}{right}
is available from the {\tt Wikimedia Commons}, and is free to use with attribution.}
In fact, both are cuts of the Archimedean solids, see e.g.~\cite[p.~88]{Cro}.
The faces of both surfaces are regular triangles, squares or pentagons.
Adding a pyramid to each face we obtain domes over $Q_8$ and~$Q_{10}$.

\subsection{Proof of Theorem~\ref{t:cyclic}}
We follow notation in the proof of Lemma~\ref{l:rhombus},
and employ the symmetry of~$Q_n$ at every step.

First, attach a unit triangle to each side of $Q_n$ at angle $\theta>0$
to the plane. In the construction below, we make assumptions on what are
distances $a_i$ and $\al$ defined below.  These will prohibit countably
many values of~$\theta$ in a manner similar to the proof of
Theorem~\ref{t:dense}.  We present the construction first for the
purposes of exposition.

Let the angle $\theta>0$ be very small, to be chosen at
a later point.  Denote by $a_1$ the distances
between vertices of adjacent unit triangles,
see Figure~\ref{f:n-gon-dist} below. Assume that \ts
$a_1\notin\overline{\qqq}$.  Note that $a_1>0$ is well defined
for $n\ge 7$.

Moving along the boundaries of triangles attached to~$Q_n$,
attach to their unit edges
$n$ rhombi \ts $R_1=\rho_{m_1}\bigl(a_1,\ast\bigr)$.  To simplify
the notation, we use~$(\ast)$ for the second diagonal,
since it is completely determined by the multiplier $m_1$ and~$a$, see
the proof of Lemma~\ref{l:rhombus}.  Take $m_1$ large enough and
chosen so that $R_1$ is nearly planar, oriented towards the $z$ axis,
and at an angle $\theta_1 >\theta$ with the plane.  Such $m_1$ exists by Lemma~\ref{l:rhombus} if we
assume further that \ts $a_1\notin\overline{\qqq}$.

\begin{figure}[hbt]
 \begin{center}
   \includegraphics[height=3.6cm]{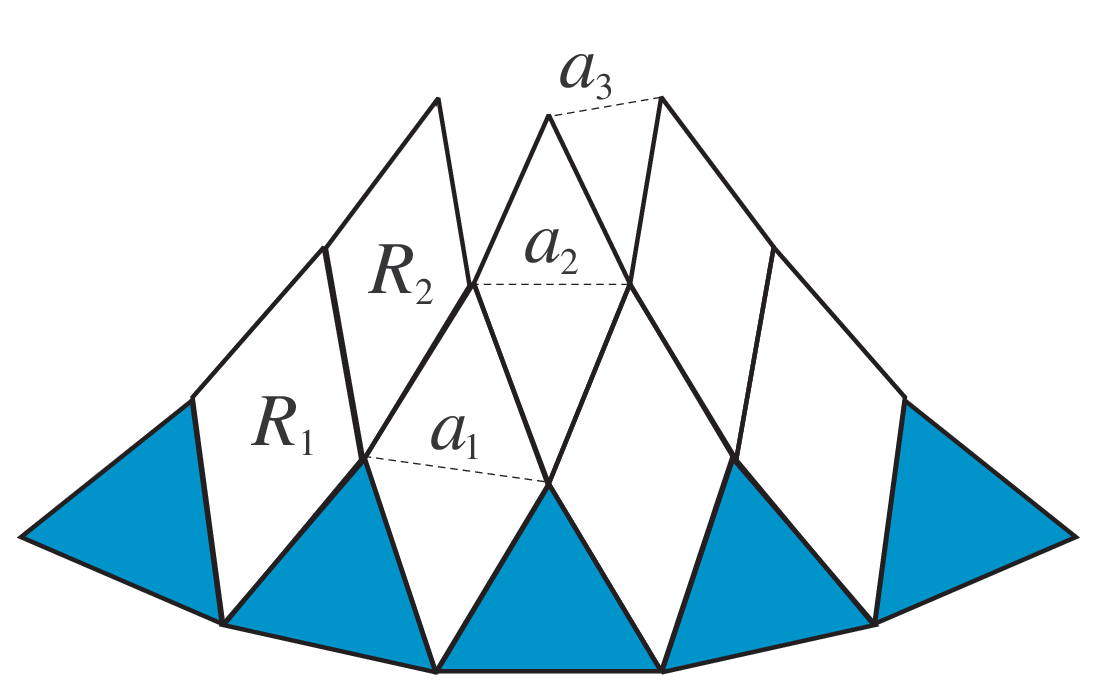}
   \caption{Example of rhombi $R_1$, $R_2$, and distances \ts $a_1$, $a_2$, $a_3$. }
   \label{f:n-gon-dist}
 \end{center}
\end{figure}

Next, moving along the boundary, denote by $a_2$ the distances
between vertices of adjacent rhombi~$R_1$, and observe that \ts
$a_2\notin\overline{\qqq}$.  Now attach to the adjacent unit
edges rhombi \ts $R_2=\rho_{m_2}\bigl(a_2,\ast\bigr)$,
where the multiplier $m_2$ is large enough and chosen
so that $R_2$ is nearly planar, at an angle $\theta_2 >\theta_1$,
see Figure~\ref{f:n-gon}. Again, such $m_2$ exists by
Lemma~\ref{l:rhombus}, if we assume further that \ts
$a_2\notin\overline{\qqq}$.

Repeat this procedure for $k$ iterations, until the distance~$\beta$
to the vertical $z$-axis from new rhombi vertices satisfies \ts $\beta<\sqrt{1-\al^2/4}$.
Here \ts $\al:=a_{k+1}$ \ts denotes the distance between closest new vertices of the adjacent
rhombi \ts $R_k=\rho_{m_k}\bigl(a_k,\ast\bigr)$, and we assume that
that \ts $\al\notin \overline{\qqq}$.  The above bound on $\be$ corresponds to
having the projection of the nearly planar rhombus $R_{k+1}$ cover
the origin~$O$, see Figure~\ref{f:n-gon} (center).

At this stage, attach to the adjacent unit edges new unit rhombi \ts
$R = \rho_M(\al,\ast)$ \ts
in such a way that the new vertices are at distance $\de>0$ from the $z$-axis,
see Figure~\ref{f:n-gon} (center).  By Lemma~\ref{l:rhombus},
distance $\de>0$ can be made as small as necessary.

Now, the construction above is uniquely determined by the
angle $\theta>0$ and the integer sequence
$$\bm \ts := \ts (m_1,m_2,\ldots,m_k,M).
$$
Since the number of vectors $\bm$ is countable, the
assumptions $a_i,\al\notin\overline{\qqq}$ over all~$\bm$
represent countably many inequalities on~$\theta$, so
\emph{for some}~$\theta>0$ the above construction is well defined.

The resulting (partial) surface $S$ is continuously deformed with
$\theta$ for every fixed~$\bm$ (cf.\ the proof of
Lemma~\ref{l:generic-flip-equiv}).  Indeed, this follows by
induction: first observe that~$a_1$ is continuously changing with~$\theta$,
then so does~$a_2$, etc., until we conclude with~$\al$;
details are straightforward.

Now, continuously changing \ts $\theta>0$ \ts
and using the symmetry, we can place all the remaining free rhombi vertices
onto the $z$~axis. This completes the construction of a dome over~$Q_n$ for all $n\ge 7$,
and the smaller cases $3\le n \le 6$ are discussed above.

Finally, for a regular $n$-gon \ts $rQ_n$, replace unit triangles~$Q_3$
with their scaled version \ts $rQ_3$ \ts and proceed as above.  Now triangulate
every copy \ts $rQ_3$ with \ts $r^2$ \ts unit triangles~$Q_3$, completing
the construction of a dome over~$rQ_n$. \ $\sq$

\begin{figure}[hbt]
 \begin{center}
   \includegraphics[height=4.92cm]{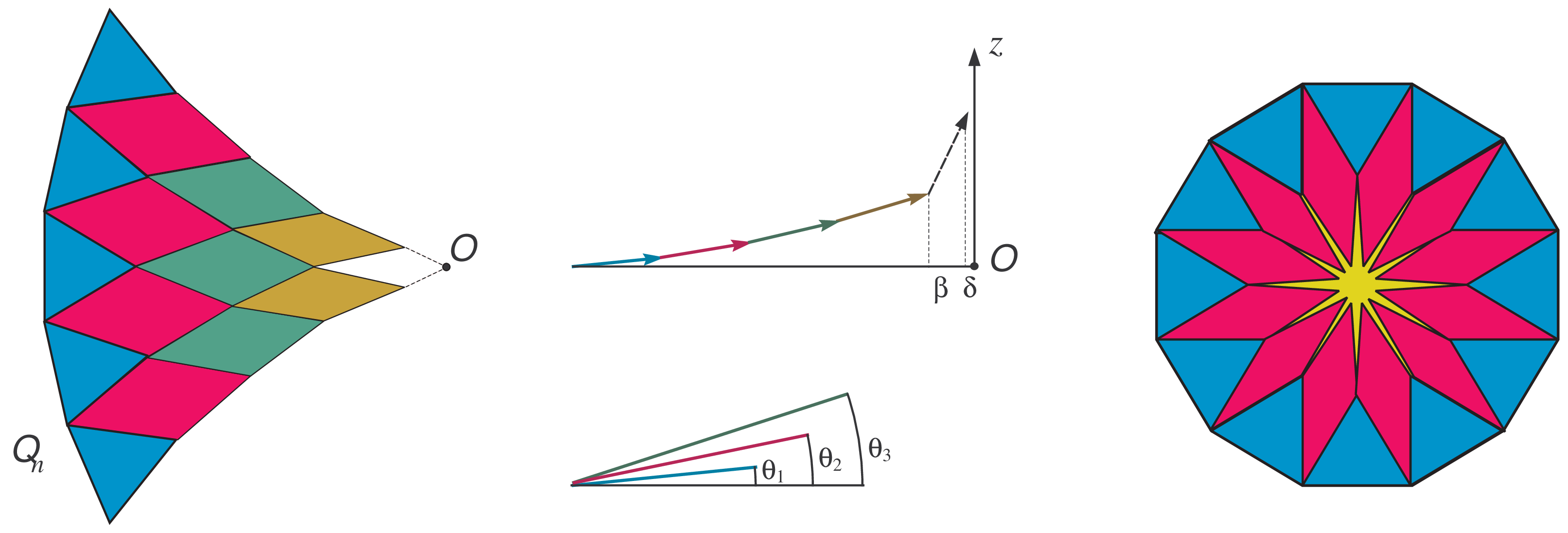}
   \caption{\underline{Left}: Nearly planar tiling of a portion of $Q_n$ with rhombi.
   \. \underline{Middle}:  Vertical slice. \. \underline{Right}: Example of $Q_{12}$
   with triangles and nearly-flat rhombi~$R_1$.}
   \label{f:n-gon}
 \end{center}
\end{figure}

\begin{rem}{\rm
Also, one can ask if a version of the \emph{arm lemma}
(see e.g.~\cite[$\S$22.2]{Pak}), holds in this case.
We believe this to be true for every fixed~$\bm$, on a
sufficiently small interval \ts $\theta \in (\theta_0-\ep,\theta_0+\ep)$,
but this result is not necessary for the argument in the proof.
}\end{rem}

\bigskip

\section{The algebra of squared diagonals} \label{s:not}

\subsection{Contractible domes} \label{ss:not-easy}
As a warm-up to the proof of Theorem \ref{t:not}, we first present a
short argument for the case when the spanning surface $S$ is homeomorphic
to a disc.

\begin{prop}\label{p:not}
Let \ts $\ga\ssu \rr^3$ \ts be a unit rhombus \ts $\ga=\rho(\ds,\dt)$,
with diagonal lengths $\ds$ and~$\dt$. Suppose $\ga$ can be domed
by a surface homeomorphic to a disc. Then there exists a polynomial
\ts $P\in \qqq[x,y]$, such that \ts $P(\ds^2,\dt^2)=0$.
\end{prop}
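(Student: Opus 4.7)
The plan is to show that the realization variety of a disc-dome $S$ has algebraic dimension $1$ modulo rigid motions, forcing $(\ds^2, \dt^2)$ to lie on an algebraic curve defined over $\qqq$.

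Let $\mathcal{R} \ssu \rr^{3V}$ be the real algebraic variety of unit-edge realizations of the combinatorial dome $S$, cut out by $|v_i - v_j|^2 = 1$ over all edges $(i,j)$; it is defined over $\qqq$, as is the polynomial map
$$
\Phi \ts :\ts \mathcal{R} \. \lar \. \aA^2, \qquad \Phi(v_1, \ldots, v_V) \, := \, \bigl(|v_1 v_3|^2,\. |v_2 v_4|^2\bigr)\ts.
$$
Once we show $\dim (\mathcal{R}/\text{rigid motions}) \le 1$, the Zariski closure of $\Phi(\mathcal{R})$ in $\aA^2$ is an algebraic variety of dimension at most $1$ defined over $\qqq$, so it is contained in a curve cut out by a nonzero $P \in \qqq[x,y]$. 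Since the given dome is a realization in $\mathcal{R}$ mapping to $(\ds^2,\dt^2)$, this yields $P(\ds^2, \dt^2) = 0$.

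For the dimension estimate, Euler's formula combined with $3F = 2E - 4$ (every face is a triangle, the boundary is a quadrilateral) gives $E = 3V - 7$ and $F = 2V - 6$, so the expected dimension of $\mathcal{R}$ modulo rigid motions is $(3V - 6) - E = 1$. I would make this rigorous via a dual-tree traversal: place $T_1$ canonically (coordinates in $\qqq(\sqrt 3)$), then attach the remaining $2V - 7$ triangles one at a time, each sharing an edge with the previously placed subcomplex. Each \emph{new-vertex} attachment introduces one dihedral-angle parameter $u = \tan(\phi/2)$ (keeping all coordinates rational in the $u$'s) and automatically makes two of the new triangle's edges have unit length; each \emph{consistency} attachment (triangle with all three vertices already placed) introduces no new parameter but imposes algebraic equations on prior $u$'s. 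Edge bookkeeping shows there are $V-3$ new-vertex attachments and $V_i:=V-4$ consistency attachments, with the latter contributing in total exactly $V_i$ new unit-edge constraints (since $T_1$ supplies $3$ edges and new-vertex attachments supply $2(V-3)$, leaving $E - 3 - 2(V-3) = V - 4$ edges to come from consistencies). Generic independence of these $V_i$ constraints on the $V-3$ parameters then yields $\dim(\mathcal{R}/\text{rigid}) = 1$. Finally, the Galois involution $\sqrt{3}\mapsto -\sqrt{3}$ sends the entire construction to its mirror image, preserving all squared distances, so $\Phi$ is defined over $\qqq$ rather than merely over $\qqq(\sqrt{3})$.

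The main obstacle is the generic independence of the $V_i$ consistency constraints on the $V-3$ dihedral-angle parameters---equivalently, a full-rank claim for the rigidity matrix of the bar-joint framework $S$ at a generic realization. For disc triangulations this is expected from standard infinitesimal rigidity, but it deserves an explicit argument, for example via an ear-decomposition ordering of the attachments so that each consistency triangle shares two edges with the prior subcomplex (yielding a single scalar equation per consistency), or via a deformation to a generic configuration where the rank is automatic. Once this is in hand, the rest of the proof is routine dimension theory.
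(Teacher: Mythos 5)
Your overall strategy differs from the paper's: the paper converts the disc dome into a doubly periodic triangular surface homeomorphic to the plane (Lemma~\ref{l:surface}) and invokes the Gaifullin--Gaifullin theorem (Theorem~\ref{t:gai}) on the Gram matrix \ts $(g_{11},g_{12},g_{22})=(\ds^2,0,\dt^2)$ \ts of the period lattice, whereas you try to bound the dimension of the realization variety $\mathcal R$ of the dome directly. The difficulty you flag at the end is not a technical loose end but the entire content of the statement, and your proposed fixes do not close it. The count $E=3V-7$ gives only a \emph{lower} bound $\dim Z\ge 3V-E=7$ for every irreducible component $Z$ of $\mathcal R$; to conclude that the Zariski closure of $\Phi(\mathcal R)$ is a proper subvariety of $\aA^2$ you need the \emph{upper} bound $\dim Z\le 7$ for the component containing the given realization (indeed for every component met by the real points, since the $\qqq$-Zariski closure is a Galois-stable union of components). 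Rank of the rigidity matrix is only lower semicontinuous, so ``generic independence'' of the $V-4$ consistency constraints, or ``deformation to a generic configuration where the rank is automatic,'' controls only generic components and generic points; it says nothing about the given dome, whose vertex configuration may be arbitrarily special (your $\ds,\dt$ are arbitrary reals, and the proposition is vacuous for generic realizations anyway, since those have algebraically independent diagonals only if no relation exists). A component of excess dimension through the given realization would allow $\Phi$ restricted to it to be dominant onto $\aA^2$, and no polynomial relation would follow.

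This is exactly the failure mode that the bellows-conjecture machinery is designed to circumvent: flexible closed triangulated surfaces (Bricard octahedra, Connelly's sphere) already show that expected-dimension counts for fixed-edge-length triangulated surfaces can be wrong, and the paper's own Appendix (Theorem~\ref{t:G-not}) exhibits a doubly periodic triangular surface whose flex is three-dimensional rather than the expected one-dimensional. That is why the paper routes the argument through the theory of places (via Theorem~\ref{t:gai} here, and Lemma~\ref{l:finite} in the general case): places produce an algebraic relation valid at \emph{every} realization, generic or not. As written, your argument establishes the conclusion only for domes lying on a component of $\mathcal R$ of the expected dimension, which is strictly weaker than the proposition.
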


For the proof of the proposition, we need to consider
doubly periodic surfaces homeomorphic to the plane. Let $K$ be a
simplicial connected pure 2-dimensional complex with
a free action of the group \ts $\Ga = \zz\oplus\zz$ \ts with generators
$a$ and $b$.  Assume that~$\Ga$ acts as a linear bijection on
each simplex of $K$, and that the number of orbits of
triangles under the action of~$\Ga$ is finite. Consider a mapping \ts
$\theta: K\rightarrow \rr^3$, linear on each simplex of~$K$,
and equivariant with respect to the action of $\zz\oplus\zz$,
such that $a$ and $b$ act by translations with vectors
$\alpha$ and~$\beta$, respectively. Then the pair $(K,\theta)$ is called a
\textit{doubly periodic triangular surface}. Sometimes, with a slight abuse of notation, we call the surface $K$ as well.

Now, let us construct a doubly periodic surface comprised of
unit triangles for every unit triangulation of a unit rhombus.

\smallskip

\begin{lemma}\label{l:surface}
For a unit rhombus \ts $\gamma=\rho(s,t)$ \ts with diagonals $\ds$
and~$\dt$ that can be domed, there is a doubly periodic surface of unit triangles
with two orthogonal periodicity vectors of length $\ds$ and~$\dt$,
respectively.  Moreover, there is such a surface homeomorphic to the
plane if the unit triangulation of the rhombus spans a surface homeomorphic
to a disc.
\end{lemma}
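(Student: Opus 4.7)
The plan is to construct the doubly periodic surface by tiling $\rr^3$ in a checkerboard pattern using two alternating copies of the given dome $S$: the original $S$ and an isometric image $\phi(S)$ that fills the complementary ``gap rhombi''.

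First, I place $\rho(a,b)$ symmetrically in $\rr^3$ so that its two diagonals are orthogonal and axis-aligned: take $v_1 = (0,0,0)$, $v_3 = (a,0,0)$, $v_2 = (a/2,-b/2,h)$, and $v_4 = (a/2,b/2,h)$, where $h = \sqrt{1-(a^2+b^2)/4}$. Then $\alpha := v_3 - v_1$ and $\beta := v_4 - v_2$ are orthogonal of lengths $a$ and $b$. With $\Gamma := \zz\alpha \oplus \zz\beta$, each translate $S_{k,l} := S + k\alpha + l\beta$ is a dome over the translated rhombus $\rho_{k,l} := \rho + k\alpha + l\beta$.

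The crucial geometric step is to observe that the quadrilateral $\sigma$ wedged between $\rho_{0,0}, \rho_{1,0}, \rho_{0,1}, \rho_{1,1}$ -- with vertices $(a,0,0)$, $(a/2,b/2,h)$, $(a,b,0)$, $(3a/2,b/2,h)$ -- is itself a unit rhombus $\rho(a,b)$: a direct computation gives side lengths $\sqrt{a^2/4+b^2/4+h^2}=1$ and diagonal lengths $a$ and $b$. Since a unit rhombus in $\rr^3$ is determined up to rigid motion by its diagonals, there is an orientation-preserving isometry $\phi$ of $\rr^3$ with $\phi(\rho) = \sigma$, explicitly $\phi(x) = \mathrm{diag}(1,-1,-1)\,x + (a/2,b/2,h)$ (a screw motion). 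Then $\phi(S)$ is a dome over $\sigma$, and I place a translate $\phi(S) + k\alpha + l\beta$ in every gap. The resulting complex $K$, with its natural map $\theta\colon K\to\rr^3$, carries a free translation action of $\Gamma$ with exactly two orbits of rhombi (hence finitely many orbits of triangles), yielding a doubly periodic surface with the required orthogonal periodicity vectors $\alpha$ and $\beta$.

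For the second assertion, assume $S$ is a disc, hence so is $\phi(S)$. Each boundary edge of an $S$-rhombus is glued to exactly one boundary edge of a $\phi(S)$-rhombus and vice versa, and four rhombus-discs (two of each type) meet cyclically at each boundary vertex, producing a disc neighborhood; so $K$ is a $2$-manifold. A routine count of orbits in one fundamental domain gives
$$V - E + F \ = \ 2(V_S - E_S + F_S) - 2 \ = \ 0,$$
where $V_S,E_S,F_S$ are the vertex/edge/face counts of $S$. Combined with orientability (since $\phi$ is orientation-preserving), the quotient $K/\Gamma$ is a closed orientable surface of Euler characteristic zero, i.e.\ a torus; hence its universal cover $K$ is homeomorphic to $\rr^2$.

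The main obstacle is the geometric insight that the quadrilateral gap between four translates of $\rho(a,b)$ is again a unit rhombus with the same pair of diagonals (the ``checkerboard'' structure, with the two diagonals interchanged in space). Once this observation is in hand, producing the isometry $\phi$, verifying $\Gamma$-periodicity, and carrying out the Euler characteristic computation are each essentially bookkeeping.
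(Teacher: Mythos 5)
Your construction is essentially the paper's own proof: both build the checkerboard arrangement in which translates of $\rho(a,b)$ alternate with congruent ``gap'' rhombi (the paper realizes the gaps as translates of $-\gamma$, you as images under a screw motion --- the same point sets), then fill every cell with an isometric copy of the dome, with the diagonals as periodicity vectors. Your Euler-characteristic verification of the second claim is a more detailed version of what the paper treats as clear, and it checks out.
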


\smallskip

\begin{proof}
First we construct a doubly periodic surface whose cells are either
parallel translates of $\ga$ or $-\ga$ (see Figure \ref{f:dpsurface}).
This surface is combinatorially equivalent to a tiling of the
plane with unit squares. A chessboard coloring of such a tiling
makes white squares correspond to parallel translates of $\ga$
and black squares correspond to parallel translates of $-\ga$.
The periodicity vectors of this surface are the vectors of the
diagonals of~$\gamma$.

Attaching a spanning unit triangulation to each translate of
$\ga$ and $-\ga$ we obtain a doubly periodic polyhedral surface
comprised of unit triangles with required periodicity vectors.
Clearly, if a spanning triangulation of $\gamma$ is homeomorphic
to a disk, the resulting doubly periodic surface is homeomorphic
to the plane.
\end{proof}

\begin{figure}[hbt]
 \begin{center}
   \includegraphics[height=3.4cm]{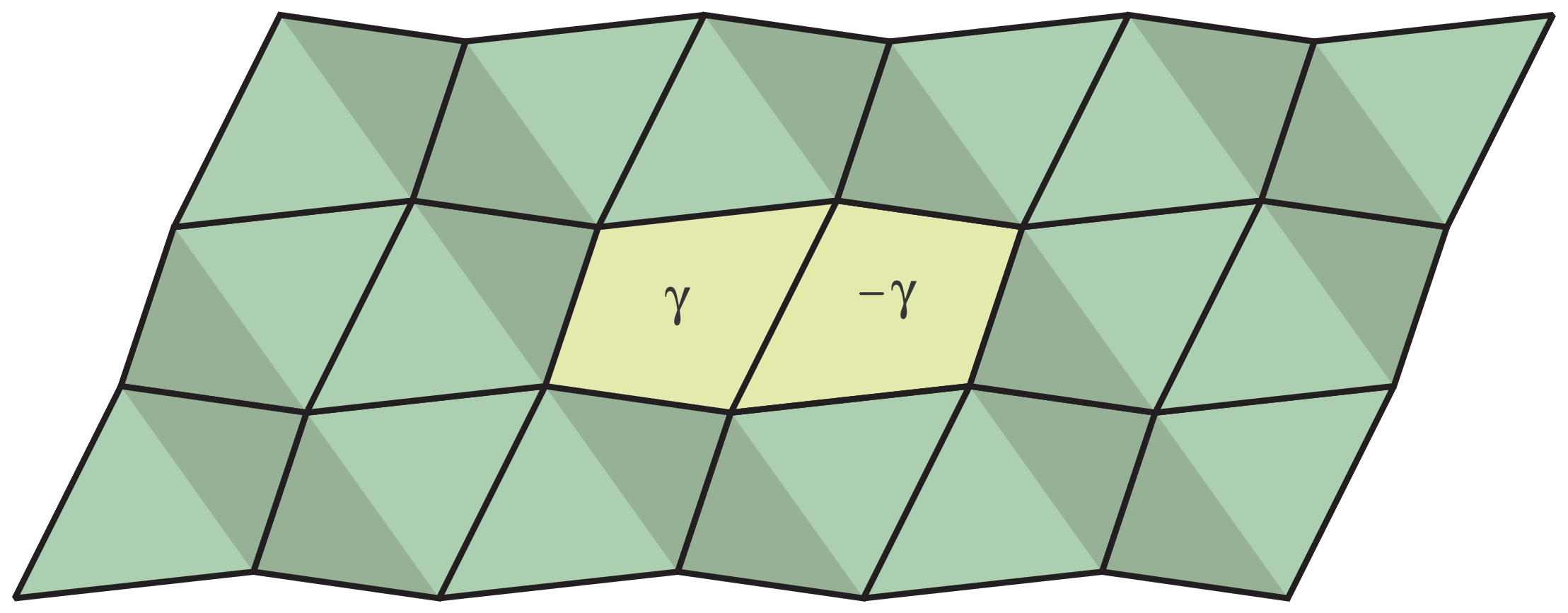}
   \caption{Doubly periodic surface from translates of $\gamma$ and $-\gamma$.}
   \label{f:dpsurface}
 \end{center}
\end{figure}

At this point we consider only doubly periodic triangular surfaces
\ts $(K,\theta)$ \ts with unit triangular faces. For a fixed complex~$K$,
let \ts $\mathcal G(K)$ \ts be the set of all possible Gram matrices
formed by vectors $\alpha$ and $\beta$ for all doubly periodic triangular
surfaces \ts $(K,\theta)$. For a Gram matrix \ts $G\in\mathcal G(K)$,
we denote its entries by \ts $g_{11}$, \ts $g_{12}=g_{21}$, and \ts $g_{22}$.

\begin{thm}[{Gaifullin--Gaifullin~\cite{GG}}]\label{t:gai}
 Let $K$ be a simplicial pure 2-dimensional complex homeomorphic
 to $\rr^2$ with a free action of the group $\zz\oplus\zz$.
 Then there is a one-dimensional real affine algebraic
 subvariety of $\rr^3$ containing $\mathcal G (K)$.

In particular, the entries of each Gram matrix $G$ from
$\mathcal G(K)$ satisfy a one-dimensional system of two
non-trivial polynomial equations with integer coefficients:
$$
\begin{cases}
p(g_{11},g_{12},g_{22}) = 0\\
q(g_{11},g_{12},g_{22}) = 0.
\end{cases}
$$
\end{thm}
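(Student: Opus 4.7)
The plan is to translate the statement into a claim about an algebraic variety of realizations and then bound its dimension using valuation-theoretic arguments in the spirit of Sabitov's proof of the bellows conjecture.

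First I would set up the configuration space. Let a fundamental domain of~$K$ contain $V$ vertices, $E$ edges, and $F$ triangular faces. The Euler relation on the torus $K/\Ga$ gives $V-E+F=0$, and the edge-face incidence $2E=3F$ yields $E=3V$ and $F=2V$. The space $X$ of all doubly periodic unit-triangle realizations $\theta$ is cut out in $\rr^{3V+6}$ (coordinates of the $V$ fundamental vertices together with the periodicity vectors $\alpha,\beta$) by the $3V$ polynomial equations $|\theta(v_i)-\theta(v_j)|^2=1$, one per $\Ga$-orbit of edges (with the appropriate translate by $\alpha$ or $\beta$ when an edge crosses the boundary of the fundamental domain). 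These equations have integer coefficients, and after quotienting by the $6$-dimensional group of rigid motions of $\rr^3$, $X$ becomes a real algebraic variety defined over $\qqq$.

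Next, the map $\pi:X\to\rr^3$ sending a realization to $(g_{11},g_{12},g_{22})=\bigl(|\alpha|^2,\<\alpha,\beta\>,|\beta|^2\bigr)$ is polynomial with integer coefficients, so its Zariski closure $Y:=\overline{\pi(X)}\subset\rr^3$ is a real affine algebraic variety defined by polynomials in $\zz[x,y,z]$. The theorem reduces to showing $\dim Y\le 1$, which automatically furnishes two independent nontrivial integer polynomial equations $p=q=0$ vanishing on $\mathcal G(K)$.

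The crux is the dimension bound $\dim Y\le 1$: even when $X$ has many directions of flex, the Gram matrix should be forced to trace out only a one-parameter family, and this is where I expect the real difficulty to lie. I would argue by contradiction: assume that on some irreducible component of $X$ two among $g_{11},g_{12},g_{22}$ are algebraically independent over $\qqq$, and pick a place $\varphi$ of the function field of this component under which, say, $g_{11}\to\infty$ while $g_{22}$ stays finite and transcendental. Under $\varphi$ the periodicity vector $\alpha$ has unbounded norm, yet every edge of the intrinsic complex $K$ retains length exactly~$1$. This forces the fundamental-domain vertices to align themselves into long thin ``strips'' in the direction of $\alpha$, and $\Ga$-equivariance together with the connectedness of $K$ propagates this structure through the whole complex. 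A detailed combinatorial analysis of such non-Archimedean degenerations, as carried out in~\cite{GG}, produces a contradiction and forces $\dim Y\le 1$. Running the argument for each of the three pairs of transcendentals among $g_{11},g_{12},g_{22}$ then yields the two explicit integer polynomial equations asserted in the statement.
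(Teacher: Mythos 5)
This statement is quoted from Gaifullin--Gaifullin~\cite{GG}; the paper does not reprove it, so the only fair comparison is with the argument of~\cite{GG} as adapted in Section~\ref{s:not} (the Main Lemma~\ref{l:finite}). Your global framework is the right one: set up the realization variety over~$\qqq$, reduce the claim to showing that any two of $g_{11},g_{12},g_{22}$ are algebraically dependent over~$\qqq$, and derive a contradiction from a place that is finite on all squared edge lengths but infinite on the relevant Gram entries. (Your concluding step is also fine: pairwise dependence gives transcendence degree at most one, hence two independent integer relations.)

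The gap is in the crux, which you assert rather than prove. The picture of vertices ``aligning into long thin strips'' is not how the degeneration analysis works, and the deferral to ``a detailed combinatorial analysis as carried out in~\cite{GG}'' leaves out the two ideas that actually carry the proof: (i) the Connelly--Sabitov--Walz lemma (Theorem~\ref{l:csw}), which propagates finiteness of the place from the edges of a vertex link to at least one diagonal, and (ii) an induction on the complexity of the surface via surgeries along non-facial triangles and edge flips, which eventually produces an edge realizing a nonzero lattice vector, forcing $\phi$ to be finite on some $(\la,\la)$ (and, in the planar case, on $(\la,\mu)$ as well). Most importantly, your sketch never uses the hypothesis that $K$ is homeomorphic to $\rr^2$. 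That hypothesis is essential: Theorem~\ref{t:G-not} in the Appendix exhibits a doubly periodic triangular surface of higher genus with a three-dimensional flex, so the conclusion you are trying to prove is \emph{false} without planarity. In the actual argument planarity enters through the connectivity property of the surgeries (see Remark~\ref{r:non-ex}): for $K\cong\rr^2$ the quotient $K/\La$ is a torus and the surgeries keep a connected doubly periodic piece, whereas for higher genus the periodicity classes in $\pi_1(K/\La)$ need not commute and the induction breaks. Any correct proof must identify where this hypothesis is used; yours does not.
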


\begin{rem}{\rm
In fact, the result in~\cite{GG} is more general, as the authors
consider all polygonal doubly periodic
surfaces homeomorphic to the plane with arbitrary sets of
side lengths. In this setting, the coefficients of polynomials~$p$
and $q$ are obtained from the ideal generated by squares of all
side lengths of polygons in the polygonal surface
}\end{rem}

\smallskip

\begin{proof}[Proof of Proposition \ref{p:not}]
If a unit rhombus with diagonals $\ds$ and $\dt$ can be spanned
by unit triangles then, by Lemma \ref{l:surface},
there is a doubly periodic triangular surface with
orthogonal periodicity vectors of length $\ds$ and $\dt$.
The entries of the Gram matrix of periodicity vectors are
$g_{11}=\ds^2$, $g_{12}=0$, $g_{22}=\dt^2$.
By Theorem~\ref{t:gai}, there are two polynomials $p$, $q$
with integer coefficients vanishing on the entries of the
Gram matrix. Thus, at least one of the equations \ts
$p(\ds^2,0,\dt^2) = 0$ \ts and \ts $q(\ds^2,0,\dt^2) = 0$ \ts
is non-trivial, and can be used for the polynomial~$P$.
\end{proof}

\medskip

\subsection{Theory of places} \label{ss:not-places}
There is no result generalizing Theorem~\ref{t:gai} for doubly periodic surfaces of non-trivial topology and, moreover, we will show in Theorem~\ref{t:G-not} that such a generalization is not true. However, for our purposes, we do not need two polynomials $p$ and $q$ as in Theorem \ref{t:gai}. It is sufficient to find at least one polynomial that is non-trivial whenever $g_{12}=0$. The machinery developed in \cite{GG} is based on the proof of the bellows conjecture for orientable 2-dimensional surfaces~\cite{CSW}, and is also the basis for our approach. We use places of fields as the main algebraic instrument of the proof.

Let $F$ be a field and $\wh{F}$ be $F$ extended by~$\infty$, i.e. $\wh{F}=F\cup\{\infty\}$ with arithmetic operations extended to $\wh{F}$ by
$$
a\pm\infty=\infty \ \text{ and } \ \frac a \infty = 0\,, \ \ \text{ for all } \ a\in F,
$$
$$
a\cdot\infty \. = \.\frac a 0\. = \. \infty \ \ \text{ for all } \ a\in \wh{F}\setminus\{0\}.
$$
The expressions
$$\dfrac 0 0\,, \ \dfrac \infty \infty\,, \ \ 0\cdot\infty  \ \ \. \text{ and } \ \infty\pm\infty \ \ \, \text{are not defined}.
$$

Let $L$ be a field. A map $\phi:L\rightarrow\wh{F}$ is called a \textit{place} if \ts $\phi(1)=1$ \ts and
$$
\phi(a\pm b)\. = \. \phi(a)\pm\phi(b), \ \, \phi(a\cdot b) \. = \. \phi(a)\cdot\phi(b) \ \text{ for all } \ a,b\in L,
$$
whenever the right-hand side expressions are defined.

As a direct consequence of the definition, we have \ts $\phi(0)=0$ \ts for all places, and \ts $\phi(x)=\infty$ \ts for \ts $x\neq 0$ \ts if and only if \ts $\phi(-x)=\infty$ \ts and if and only if \ts $\phi(1/x)=0$. It is also clear that whenever \ts $\text{char}\, F=0$, we must also have \ts $\text{char}\, L=0$.  Similarly, we have \ts $\phi(kx)=\infty$ \ts for a non-zero \ts $k\in \zz$,
if and only if \ts $\phi(x)=\infty$. \ts
We will use the following basic fact on extensions of places.

\begin{lemma}[{see e.g.~\cite[Ch.~1, Thm~1]{Lan}}]\label{l:lang}
Let $L$ be a field containing a ring $R$. Let $\phi$ be a homomorphism
from~$R$ to an algebraically closed field $\Omega$, and suppose \ts $\phi(1)=1$.
Then $\phi$ can be extended to a place \ts $L\rightarrow\Omega\cup\{\infty\}$.
\end{lemma}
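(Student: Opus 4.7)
The plan is to prove this classical extension result via Zorn's lemma, growing the partial place one element at a time until it must be defined on all of~$L$.

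First, I would reduce to a local situation. Setting $\mathfrak{p}=\ker\phi$, every $s\in R\setminus\mathfrak{p}$ satisfies $\phi(s)\neq 0$, so $\phi$ extends uniquely to the localization $R_\mathfrak{p}\ssu L$ via $r/s\mapsto\phi(r)/\phi(s)$. Replacing $R$ by $R_\mathfrak{p}$, I may assume $R$ is a local subring of $L$ whose residue field embeds in $\Omega$ via $\phi$.

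Second, consider the poset $\mathcal{P}$ of pairs $(A,\psi)$ with $R\subseteq A\subseteq L$ a subring and $\psi\colon A\to\Omega\cup\{\infty\}$ a partial place extending $\phi$ (meaning $\mathcal{O}_\psi:=\psi^{-1}(\Omega)$ is a subring on which $\psi$ is a ring homomorphism, and $\psi(x)=\infty$ exactly when $x\in A\setminus\mathcal{O}_\psi$). Chains have their unions as upper bounds, so Zorn yields a maximal $(A^*,\psi^*)$. Write $\mathcal{O}=\mathcal{O}_{\psi^*}$ with maximal ideal $\mathfrak{M}=(\psi^*)^{-1}(0)\cap\mathcal{O}$; the task reduces to showing $\mathcal{O}$ is a valuation ring of~$L$, since then $A^*=L$ automatically.

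Third, the heart of the argument: for each $x\in L^\times$, show that $x\in\mathcal{O}$ or $x^{-1}\in\mathcal{O}$. The central sub-lemma asserts that at least one of the extended ideals $\mathfrak{M}\mathcal{O}[x]$, $\mathfrak{M}\mathcal{O}[x^{-1}]$ is proper. Granting this, say $\mathfrak{M}\mathcal{O}[x]\subsetneq\mathcal{O}[x]$; pick a maximal ideal $\mathfrak{N}\supseteq\mathfrak{M}\mathcal{O}[x]$. The field $\mathcal{O}[x]/\mathfrak{N}$ is algebraic over $\mathcal{O}/\mathfrak{M}$, which embeds into $\Omega$, so algebraic closedness of~$\Omega$ furnishes an embedding $\mathcal{O}[x]/\mathfrak{N}\hookrightarrow\Omega$. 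The composition $\mathcal{O}[x]\to\Omega$ extends $\psi^*|_\mathcal{O}$; localizing at its kernel and declaring the image to be $\infty$ on the remaining elements produces a partial place on a strictly larger subring, contradicting maximality.

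The main obstacle is the sub-lemma. One assumes both $\mathfrak{M}\mathcal{O}[x]=\mathcal{O}[x]$ and $\mathfrak{M}\mathcal{O}[x^{-1}]=\mathcal{O}[x^{-1}]$, writes $1=\sum a_ix^i$ and $1=\sum b_jx^{-j}$ with $a_i,b_j\in\mathfrak{M}$, selects representations of minimal combined degree, and uses the local nature of $\mathcal{O}$ (where $1-a_0$ is a unit for $a_0\in\mathfrak{M}$) to eliminate the leading term of one side against the other, producing a shorter representation and contradicting minimality. Apart from this purely ring-theoretic manipulation, everything else is Zorn bookkeeping together with the universal property of algebraically closed fields.
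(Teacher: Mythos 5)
The paper does not actually prove this lemma; it quotes it from Lang (it is Chevalley's extension theorem for places), and your argument is the standard proof found there: Zorn's lemma, the key sub-lemma that $\mathfrak{M}\mathcal{O}[x]$ and $\mathfrak{M}\mathcal{O}[x^{-1}]$ cannot both be the unit ideal, and algebraic closedness of $\Omega$ to extend the residue map through the algebraic extension $\mathcal{O}[x]/\mathfrak{N}$ of $\mathcal{O}/\mathfrak{M}$. The localization at $\ker\phi$, the minimal-degree cancellation in the sub-lemma, and the embedding $\mathcal{O}[x]/\mathfrak{N}\hookrightarrow\Omega$ are all correct and are exactly the content of the cited result.

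The one step that would fail as written is the Zorn bookkeeping. Your notion of ``partial place'' on a subring $A$ requires only that $\mathcal{O}_\psi$ be a subring on which $\psi$ is a homomorphism and that $\psi\equiv\infty$ on $A\setminus\mathcal{O}_\psi$; nothing ties the $\infty$-locus to invertibility. Consequently the pair $(L,\psi)$ with $\mathcal{O}_\psi=R_{\mathfrak{p}}$ and $\psi\equiv\infty$ elsewhere is already maximal in your poset, yet it is not a place. More concretely, in your third step the new homomorphism on $\mathcal{O}[x]$ contradicts maximality only if it genuinely extends $\psi^*$ on all of $A^*$; if $x$, or some element of $\mathcal{O}[x]_{\mathfrak{N}}\cap A^*$, already carries the value $\infty$ under $\psi^*$, the new map disagrees with $\psi^*$ there, is not an extension, and yields no contradiction. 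The standard repair is to drop $A$ entirely: run Zorn on pairs $(\mathcal{O},\psi)$ where $\mathcal{O}$ is a subring of $L$ containing $R$ and $\psi:\mathcal{O}\to\Omega$ is an honest ring homomorphism, ordered by restriction. Maximality first forces $\mathcal{O}$ to be local with maximal ideal $\ker\psi$ (otherwise localize), then your sub-lemma forces $\mathcal{O}$ to be a valuation ring of $L$, and only at the end does one extend by $\infty$ and verify the place axioms from the valuation-ring property (for instance, $\phi(a)\neq 0,\infty$ and $\phi(b)=\infty$ give $\phi(ab)=\infty$ because $a$ is then a unit of $\mathcal{O}$). With that reformulation your argument is complete.
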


\medskip

\subsection{General domes} \label{ss:not-hard}
For a doubly periodic triangular surface $(K,\theta)$, let $\alpha$
and $\beta$ be the periodicity vectors of the surface and $\Lambda$ be the lattice generated by $\alpha$ and $\beta$. The number of orbits under the action of \ts $\Ga=\zz\oplus\zz$ \ts is finite,
so we choose a representative of each orbit.
Let \ts $(x_1, y_1, z_1), \ldots, (x_N, y_N, z_N)$ \ts be
their coordinates in $\rr^3$, and let \ts $(x_\alpha, y_\alpha, z_\alpha)$,
\ts $(x_\beta, y_\beta, z_\beta)$ \ts be the coordinates of
the periodicity vectors. Define field $\ll$ as follows:
$$
\ll\, := \, \qqq\bigl(x_1, y_1, z_1, \. \ldots \. , x_N,y_N,z_N, \. x_\alpha, y_\alpha, z_\alpha, \. x_\beta, y_\beta, z_\beta\bigr)\ts.
$$
Note that $\ll$ does not depend on the choice of representatives of orbits and the choice of the basis for the lattice~$\Lambda$.

When vertices $a,b$ on the surface $(K,\theta)$ form an edge, denote by \ts $\bl_{ab}$ \ts the squared distance between them:
$$
\bl_{ab} \. := \.
(x_a-x_b)^2+(y_a-y_b)^2+(z_a-z_b)^2.
$$
Clearly, $\bl_{ab}\in\ll$. For each surface the set of all possible $\bl_{ab}$ is finite. Let $R$ be the $\qqq$-subalgebra of $\ll$ generated by all $\bl_{ab}$ of the surface.

All vectors in $\Lambda$ can be written as integer linear combinations of the periodicity vectors, $\lambda=k\alpha+m\beta$. In case $k$ and $m$ are relatively prime, the vector $\lambda$ is called \textit{primitive}.
Denote by $\Lap$ the set of primitive vectors $\la\in \La$. By $(\lambda_1,\lambda_2)$ we mean the standard inner product of vectors $\lambda_1$ and $\lambda_2$.

\smallskip

As the first step in the proof of Theorem \ref{t:not}, we prove the lemma on finite elements of places.

\smallskip

\begin{lemma}[Main lemma]
\label{l:finite}
For a doubly periodic triangular surface $(K,\theta)$ obtained by the construction in Lemma \ref{l:surface}, let $\phi:\ll\rightarrow F\cup\{\infty\}$ be a place that is finite on all $\bl_{ab}$ defined by the surface and let $\text{char}\, F = 0$. Then there is a vector $\lambda\in\Lambda$, $\lambda\neq 0$, such that $\phi$ is finite on $(\lambda,\lambda)$.
\end{lemma}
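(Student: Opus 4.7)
The plan is to argue by contradiction, adapting the places-based technique of \cite{GG} to the present setting, where $K$ is of general topology rather than homeomorphic to $\rr^2$. Suppose, toward a contradiction, that $\phi((\lambda,\lambda))=\infty$ for every nonzero $\lambda\in\Lambda$. First I would extend $\phi$ to a place $\wh\phi\colon\ll\to\Omega\cup\{\infty\}$, where $\Omega$ is an algebraically closed field of characteristic zero containing~$F$ (possible by Lemma~\ref{l:lang}), so that polynomial equations with $\wh\phi$-finite coefficients can be solved within~$\Omega$. Since the inner products $(u-v,u-v)$, $(u,v)$, and $(\lambda,\lambda)$ are invariant under translating the realization, I may also normalize so that $\theta(v_0)=0$ for a chosen vertex $v_0\in K/\Gamma$; this makes $(v,v)$ equal to the squared length of $\theta(v)$ for every vertex~$v$, so $\wh\phi((v_0,v_0))=0$ is trivially finite.

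The central step is a propagation argument. Say a vertex $w$ of $K$ is \emph{good} if $\wh\phi((w,w))\in\Omega$. The polarization identity
\[
2\,(u,v)\ts=\ts(u,u)\.+\.(v,v)\.-\.(u-v,u-v)
\]
shows that if $u$ and $v$ are good and $\bl_{uv}$ is $\wh\phi$-finite (which is automatic on edges, since $\bl_{uv}=1$), then $(u,v)$ is $\wh\phi$-finite. Since $\theta(K)\ssu\rr^3$, any five vertices $w_1,\ldots,w_5$ of $K$ satisfy the vanishing of the $5\times 5$ Cayley--Menger determinant in their ten pairwise squared distances; hence if nine of these ten quantities are $\wh\phi$-finite, the tenth is algebraic over $\Omega$, and therefore finite. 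Iterating these two propagation rules along a combinatorial edge-path starting from $v_0$, carrying along a small ``frame'' of four reference vertices (e.g.\ $v_0$ and three well-chosen neighbors in the triangulation whose pairwise inner products are controlled), one shows that every vertex in the connected cover of $v_0$ is good. In particular, the propagation eventually reaches a vertex of the form $v_0+\lambda$ for some nonzero $\lambda\in\Lambda$, and then $\wh\phi((\lambda,\lambda))=\wh\phi((v_0+\lambda,v_0+\lambda))\in\Omega$, contradicting the assumption.

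The main obstacle is making the Cayley--Menger propagation effective: the polarization step does not by itself determine $(w,w)$ when $w$ is a new vertex added via a triangle $\{u,v,w\}$, because $w$ can rotate about the edge $uv$ in $\rr^3$. One must therefore maintain, along with the growing set of good vertices, a frame of four auxiliary vertices whose pairwise inner products remain $\wh\phi$-finite, so that each newly encountered vertex is pinned down by the Cayley--Menger relation on some set of five already-controlled points. Choosing such a frame, transporting it along arbitrary edge-paths, and ensuring that propagation does not get stuck at combinatorial obstructions is the technical heart of the proof. However, because we only need the existence of a single nonzero $\lambda$ with $(\lambda,\lambda)$ finite (not the full two-dimensional Gram variety of Theorem~\ref{t:gai}), the machinery of \cite{GG} can be adapted to the present general-topology setting in this weaker form without requiring the complete Gaifullin--Gaifullin algebraic structure theorem.
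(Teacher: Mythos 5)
Your proposal has a genuine gap, and in fact two of them. The first is the propagation mechanism itself. You claim that if nine of the ten pairwise squared distances among five points are $\wh\phi$-finite, then the tenth is ``algebraic over $\Omega$, and therefore finite.'' For a place this inference is invalid: an element satisfying a polynomial relation with finite coefficients is forced to be finite only if the \emph{leading} coefficient does not map to zero. The Cayley--Menger determinant of five points, viewed as a quadratic in one squared distance $\bl_{uv}$, has leading coefficient essentially the Cayley--Menger determinant of the remaining three points, and nothing prevents the place from killing it. This is precisely why the Connelly--Sabitov--Walz result (Theorem~\ref{l:csw}) is stated the way it is: for a vertex of degree $d\ge 4$ one only gets finiteness of \emph{at least one} diagonal $\bl_{v_iv_{i+2}}$ in the link, not of any prescribed one. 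So your two ``propagation rules'' do not let you march a frame along an arbitrary edge-path; the step you defer as ``the technical heart'' is exactly the step that fails. Relatedly, finiteness of squared edge lengths does not propagate to finiteness of $(w,w)$ along paths --- under a place there is no triangle inequality --- so goodness of $v_0$ gives you nothing for free about its neighbors.

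The second gap is structural: your argument never uses the hypothesis that $(K,\theta)$ comes from the construction of Lemma~\ref{l:surface}, i.e.\ that $K/\Lambda\cong T\# D$. But the Appendix (Theorem~\ref{t:G-not} and Remark~\ref{r:non-ex}) exhibits a doubly periodic triangular surface for which the natural generalization of the Main Lemma fails; any proof that applies verbatim to arbitrary connected doubly periodic surfaces must therefore be wrong. The paper's actual proof is quite different from what you outline: it is an induction on a complexity order (Euler characteristic of $K/\Lambda$, then vertex count, then minimal vertex degree), with two surface modifications --- a surgery along a triple of edges $ab,bc,ca$ not bounding a face, and an edge flip at a minimal-degree vertex licensed by Theorem~\ref{l:csw} --- terminating at a surface with an edge lying in $\Lambda$, where the claim is immediate. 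The topological hypothesis enters exactly in the surgery step, to guarantee that at least one of the resulting pieces remains a connected doubly periodic surface with lattice $\Lambda$. To repair your approach you would have to replace the path-propagation by some such global induction, at which point you are reconstructing the paper's argument rather than simplifying it.
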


\smallskip

For the proof, we use the following technical result of Connelly, Sabitov, and Walz~\cite[Lemma~4]{CSW},
see also~\cite[$\S$34.3]{Pak}.

\smallskip

\begin{thm}[Connelly--Sabitov--Walz]\label{l:csw}
Let $u$ be a vertex of a triangular surface in $\rr^3$ and $v_1$, $\ldots$, $v_d$, $d\geq 4$, be adjacent to it in this cyclic order, denote also $v_{d+1}=v_1$ and $v_{d+2}=v_2$. Let $\phi$ be a place that is defined on $\qqq(x_u, y_u, z_u, x_{v_1}, y_{v_1}, z_{v_1},\ldots,x_{v_d}, y_{v_d}, z_{v_d})$ and is finite on all $\bl_{uv_i}$, $\bl_{v_iv_{i+1}}$, $1\leq i\leq d$. Then $\phi$ is finite on at least one of the squared diagonal lengths $\bl_{v_iv_{i+2}}$, $1\leq i\leq d$.
\end{thm}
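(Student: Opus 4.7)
My plan is to prove the statement by contradiction, using the classical algebraic identity that any four vectors in $\rr^3$ have a Gram matrix of rank at most $3$, hence vanishing determinant. Suppose to the contrary that $\phi(\bl_{v_iv_{i+2}}) = \infty$ for every $i$ (indices mod $d$). I first translate coordinates so that $u$ sits at the origin; this does not affect any squared distance, and it lets me work with the vectors $x_i := v_i - u$ in $\rr^3$. Writing $m_i := (x_i,x_i)$ and $p_{ij} := (x_i,x_j)$, the identity $\bl_{v_iv_j} = m_i + m_j - 2p_{ij}$ combined with the hypothesis $\phi(\bl_{uv_i}) \neq \infty$ shows that finiteness of $p_{ij}$ under $\phi$ is equivalent to that of $\bl_{v_iv_j}$. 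So $m_i$ and $p_{i,i+1}$ lie in the valuation ring of $\phi$ for every $i$, while every $p_{i,i+2}$ lies outside it.

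The core algebraic input is the identity $\det\bigl((x_a,x_b)\bigr)_{a,b\in I} = 0$ for any $I \subseteq \{1,\ldots,d\}$ with $|I|=4$. Specialize to $I = \{i-1,i,i+1,i+2\}$ and expand $\det G = 0$ as a polynomial in the off-diagonal entries. A direct inspection shows that the monomial $p_{i-1,i+1}^2 \cdot p_{i,i+2}^2$ arises \emph{solely} from the double transposition $(1\,3)(2\,4)$ and has coefficient $+1$. In the base case $d=4$, the only Gram entries outside the valuation ring of $\phi$ are the two infinite entries $p_{13}$ and $p_{24}$ (the would-be extra entry $p_{i-1,i+2}$ wraps around cyclically to the edge $p_{21}=p_{12}$, hence is finite). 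Divide $\det G = 0$ through by $p_{13}^2 p_{24}^2$: the leading term becomes $1$, while every other monomial has strictly smaller combined degree in $\{p_{13},p_{24}\}$ and so becomes a finite element times a negative power of $p_{13}$ or $p_{24}$, which lies in the maximal ideal of $\phi$. Applying $\phi$ to the resulting identity produces $1 = 0$, a contradiction, settling $d=4$.

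For $d\ge 5$ the same $4\times 4$ submatrix contains an additional entry $p_{i-1,i+2}$, a jump-$3$ diagonal not directly controlled by the running hypothesis. I split into cases according to $\phi(p_{i-1,i+2})$. If this value is finite, the $d=4$ argument applies verbatim. If it equals $\infty$, I compare the valuations $v(p_{i-1,i+1}), v(p_{i,i+2}), v(p_{i-1,i+2})$ associated to $\phi$: either the distinguished monomial $p_{i-1,i+1}^2 p_{i,i+2}^2$ still has strictly minimal valuation among all monomials of $\det G$, reproducing the contradiction, or else some competing monomial involving $p_{i-1,i+2}$ dominates, in which case I rotate the quadruple around the cyclic link. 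An extremality argument over $i$, picking the consecutive quadruple for which the pair $\bigl(v(p_{i-1,i+1}),v(p_{i,i+2})\bigr)$ is most negative, should guarantee that for at least one choice of $i$ the leading monomial is indeed dominant.

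The main obstacle will be the valuation-theoretic bookkeeping in the $d\ge 5$ case: making rigorous the claim that some rotation of the argument always renders the leading monomial strictly minimal, without cancellation from lower-degree monomials involving infinite jump-$3$ (or higher) diagonals. If this direct rotation argument resists formalization, a parallel route is to replace the $4$-point Gram identity with the $5$-point Cayley--Menger identity applied to $u, v_i, v_{i+1}, v_{i+2}, v_{i+3}$. An analogous computation shows that in that $6\times 6$ determinant, the unique top-degree monomial in the three possibly infinite squared distances $\bl_{v_iv_{i+2}}, \bl_{v_{i+1}v_{i+3}}, \bl_{v_iv_{i+3}}$ is $-\bl_{v_iv_{i+2}}^2 \cdot \bl_{v_{i+1}v_{i+3}}^2$, coming from the permutation $(1\,2)(3\,5)(4\,6)$, and the same leading-monomial-division scheme then applies, completing the proof in general.
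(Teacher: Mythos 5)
First, a point of reference: the paper does not prove this statement at all --- it is quoted verbatim from Connelly--Sabitov--Walz \cite[Lemma~4]{CSW} (see also \cite[$\S$34.3]{Pak}), so there is no in-paper argument to compare against; your attempt has to stand on its own.

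Your $d=4$ case is correct and is essentially the standard argument: after translating $u$ to the origin, the four link vectors are linearly dependent in $\rr^3$, the $4\times 4$ Gram determinant vanishes, the monomial $p_{13}^2p_{24}^2$ is the unique term of bidegree $(2,2)$ (bidegrees $(2,1)$ and $(1,2)$ cannot occur in a permutation expansion), and dividing by it and applying $\phi$ gives $1=0$. The finiteness transfer between $\bl_{v_iv_j}$ and $p_{ij}$ via $\bl_{v_iv_j}=m_i+m_j-2p_{ij}$ is also handled correctly.

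For $d\ge 5$, however, there is a genuine gap, and neither of your two proposed repairs closes it. The obstruction is the same in both: the relation you extract from four consecutive link vertices (whether as a Gram determinant or as the $5$-point Cayley--Menger determinant on $u,v_i,\ldots,v_{i+3}$) contains a jump-$3$ diagonal ($p_{i-1,i+2}$, resp.\ $\bl_{v_iv_{i+3}}$) on which $\phi$ is not hypothesized to be finite. Having the unique \emph{top-degree} monomial is not enough: the expansion contains monomials such as $(p_{i,i+1}^2-m_im_{i+1})\ts p_{i-1,i+2}^{\ts 2}$ and $-2p_{i,i+1}\ts p_{i-1,i+1}\ts p_{i,i+2}\ts p_{i-1,i+2}$ of strictly lower degree in the jump-$2$ variables, and if $v(p_{i-1,i+2})$ is sufficiently negative these dominate $p_{i-1,i+1}^2p_{i,i+2}^2$ in valuation, so the division trick yields $1+(\text{infinite terms})=0$ rather than $1+0=0$. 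Your extremality device --- choosing $i$ so that $v(p_{i-1,i+1})+v(p_{i,i+2})$ is minimal --- only plays the jump-$2$ valuations against each other; for $d\ge 6$ the quantity $p_{i-1,i+2}$ is not among them, so minimality gives no upper bound on how negative $v(p_{i-1,i+2})$ can be. (It does happen to work for $d=5$, where indices $i-1$ and $i+2$ differ by $2$ modulo $5$, so the jump-$3$ entry is itself a jump-$2$ diagonal.) Chasing the jump-$3$ diagonals through further determinant relations introduces jump-$4$ diagonals, and so on; this cascade is precisely what the known proofs organize into an induction that propagates infiniteness, with controlled growth of the order of infinity, along the longer diagonals $(x_1,x_k)$ of the link until the index wraps around to the finite quantity $(x_1,x_{d+1})=(x_1,x_1)$. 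That inductive mechanism is the missing idea; without it the argument is complete only for $d=4$.
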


\medskip

\subsection{Proof of the Main Lemma~\ref{l:finite}}
The statement of the lemma is true if one of the edges of the surface
forms a vector from $\Lambda$. We define the \emph{complexity} as a
partial ordering of doubly periodic triangular surfaces with the same
periodicity lattice $\Lambda$. Surfaces with edges from $\Lambda$ are
called the \emph{least complex} (an example is given in
Figure~\ref{f:dpsurface}). For surfaces without edges from $\Lambda$,
the ordering is defined as follows.

A surface $K_1$ is said to be \emph{less complex} than~$K_2$, if the
Euler characteristic of $K_1/\Lambda$ is greater than the Euler
characteristic of $K_2/\Lambda$. The surface $K_1$ is less complex
than $K_2$ if the Euler characteristics of $K_1/\Lambda$ and $K_2/\Lambda$ are the same, and $K_1/\Lambda$
has fewer vertices than $K_2/\Lambda$. The surface $K_1$ is less
complex than $K_2$ if $K_1/\Lambda$ and $K_2/\Lambda$ have the
same Euler characteristic and the same number of vertices,
but the smallest vertex degree of $K_1$ is less than the smallest
vertex degree of~$K_2$. The proof will proceed by induction on complexity.

\smallskip

\nin
{\bf {\em First case.}} \ts Suppose the surface contains the edges
\ts $ab, bc, ca$, but does not contain a triangle \ts $abc$.
The closed curve formed by the edges $ab$, $bc$ and $ca$, divides
its neighborhood into two components. Then we define the surgery
along $[abc]$ by removing vertices \ts $a$, $b$, $c$, edges
\ts $ab$, $bc$, $ca$, and adding two copies of \ts the triangle $abc$,
which we call \ts $a'b'c'$ \ts and \ts $a''b''c''$.
We do this in such a way that \ts  $a'b'c'$ \ts and \ts $a''b''c''$ \ts
retain the incidences of $a$, $b$, $c$ in the first and the
second component of the neighborhood, respectively.

We make this \emph{surgery} for all periodic images of $abc$ under the action of $\Lambda$.
If this surgery keeps the surface connected, then it increases
the Euler characteristic of \ts $K/\Lambda$. If the surgery
splits the surface into two new surfaces then the Euler
characteristic for each of them is not smaller than the
initial Euler characteristic, for both of them there are
 fewer vertices than for the initial surface, and at least
 one of them is a connected doubly periodic triangular surface
 with the periodicity lattice~$\Lambda$.  We call the latter
 the \emph{connectivity property}, see Figure~\ref{f:surface}.

\begin{figure}[hbt]
 \begin{center}
   \includegraphics[height=5.2cm]{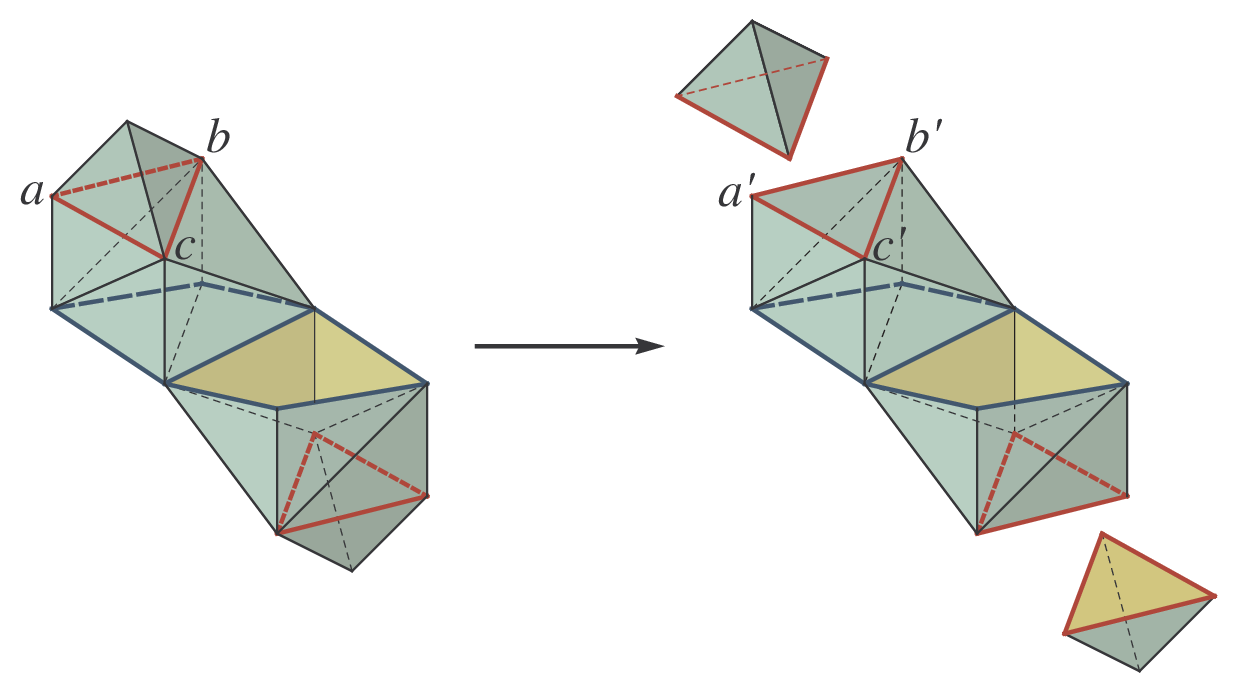}
   \caption{Connecticity property in the \emph{First Case} of a doubly periodic surface.}
   \label{f:surface}
 \end{center}
\end{figure}

The connectivity property of one of the two new surfaces requires some explanation.
Let $T$ be the torus generated by the lattice $\Lambda$, i.e.\  $T=\rr^2/\Lambda$.
Then, initially, $K/\Lambda\cong T \# D$, where \ts $\#$~denotes the connected sum,
and~$D$ corresponds to the surface formed by the two rhombi $\ga$ and $-\ga$ and
the domes over them. After the transformations to the surface described in the
proof of the lemma for the resulting surface~$K'$, we obtain that \ts $K'/\Lambda$ \ts can be always represented as $T\# D'$ for some closed surface~$D'$. The surgery described above preserves $T$ in one of the two disconnected components, thus making its corresponding surface connected (cf.\ Remark~\ref{r:non-ex} below). Since the set of $\bl_{ab}$ for either surface is a subset of the initial set, we can use the inductive step.

\smallskip

\nin
{\bf {\em Second case.}} \ts
Suppose there are no triples of vertices \ts $a, b, c$ \ts as in the first case.  Consider a vertex $u$ of the surface with the smallest degree $d$ adjacent to vertices $v_1$, $\ldots$, $v_d$. The smallest degree must be at least~4, because the first case holds otherwise. We use Theorem~\ref{l:csw} but we have to be careful with applying it as the field in Theorem~\ref{l:csw} is not a subfield of the field $\ll$ defined earlier. The issue is that some vertices $v_i$ and $v_j$ may belong to the same orbit under the action of the lattice $\Lambda$.

Let $R_u$ be a $\qqq$-subalgebra of $\kkk=\qqq(x_u, y_u, z_u, x_{v_1}, y_{v_1}, z_{v_1},\ldots,x_{v_d}, y_{v_d}, z_{v_d})$ generated by all $\bl_{uv_i}$, $\bl_{v_iv_{i+1}}$, $1\leq i\leq d$. There is a natural homomorphism $\psi$ from $R_u$ to $\ll$ mapping all elements of $R_u$ to their corresponding expressions in $\ll$. Note that we cannot automatically define this homomorphism on all elements of~$\kkk$. For example, when $v_3=v_1+\alpha$ and $v_6=v_4+\alpha$, the image of $1/(x_{v_3}+x_{v_4}-x_{v_1}-x_{v_6})$ is not defined.

At this point, we use Lemma~\ref{l:lang} and extend $\psi$ to $\overline{\psi}:\kkk\rightarrow\overline{\ll}\cup\{\infty\}$. The place $\phi$ can be also extended to a place $\overline{\phi}:\overline{\ll}\rightarrow \overline{F}\cup\{\infty\}$. In order to construct this extension, we apply Lemma~\ref{l:lang} to a subring of all elements of $\ll$ whose images under $\phi$ are finite. For the constructed mapping, $\overline{\phi}(x)=0$ if $\phi(x)=0$. Subsequently, if $\phi(x)$ is $\infty$, $\overline{\phi}(x)$ must be $\infty$ as well and $\overline{\phi}$ extends the whole place $\phi$.

Using $\overline{\phi}(\infty)=\infty$, we can define the composition $\overline{\phi}\circ\overline{\psi}$. This composition is the place from $\kkk$ to $\overline{F}\cup\{\infty\}$. Applying Theorem~\ref{l:csw} to $\overline{\phi}\circ\overline{\psi}$ we conclude that there is $i$ such that the composition and, subsequently, $\phi$ is finite on $\bl_{v_iv_{i+2}}$.

For the next step, we substitute two triangles of the surface, $uv_iv_{i+1}$ and $uv_{i+1}v_{i+2}$ with $uv_iv_{i+2}$ and $v_iv_{i+1}v_{i+2}$ simultaneously deleting the edge $uv_{i+1}$ and adding the edge $v_iv_{i+2}$. There was no edge $v_iv_{i+2}$ prior to this operation because otherwise the triangle $uv_iv_{i+2}$ would satisfy the case considered above. At the same time we make the same operations for all triangles that are the images of $uv_iv_{i+1}$ and $uv_iv_{i+2}$ under the action of $\Lambda$. As the result we obtain another surface $K'$ such that $K'/\Lambda$ is topologically the same as $K/\Lambda$ and has the same number of vertices but the minimum vertex degree of $K'$ is smaller. The place~$\phi$ is still finite on all $\bl_{ab}$ for edges~$ab$, so all conditions of the lemma still hold.

\smallskip

Observe that the operations in both cases decrease the complexity of the surface. Note that this cannot continue indefinitely since the Euler characteristic is at most~$2$, and the number of edges and vertex degrees are positive. Thus, at some point we reach the least complex surface for which the statement of the lemma is true. \ $\sq$

\begin{rem}\label{r:non-ex}
{\rm
In the proof of the First Case, the connectivity property fails for general doubly
periodic surfaces. In particular, if the elements of the fundamental group
of \ts $K/\Lambda$ \ts corresponding to periodicity vectors do not commute,
two new surfaces may be both disconnected unions of one-periodic
pieces.  An example is given in Figure~\ref{f:surface-not}. Here we show only
the bottom half of the surface, which has connected components periodic
along~$\al$.  The top half is attached to the bottom along red triangles and
has similar structure, but with connected component periodic along~$\be$.
This observation will prove crucial in the proof of Theorem~\ref{t:G-not}
in the Appendix.
}\end{rem}

\begin{figure}[hbt]
 \begin{center}
   \includegraphics[height=4.2cm]{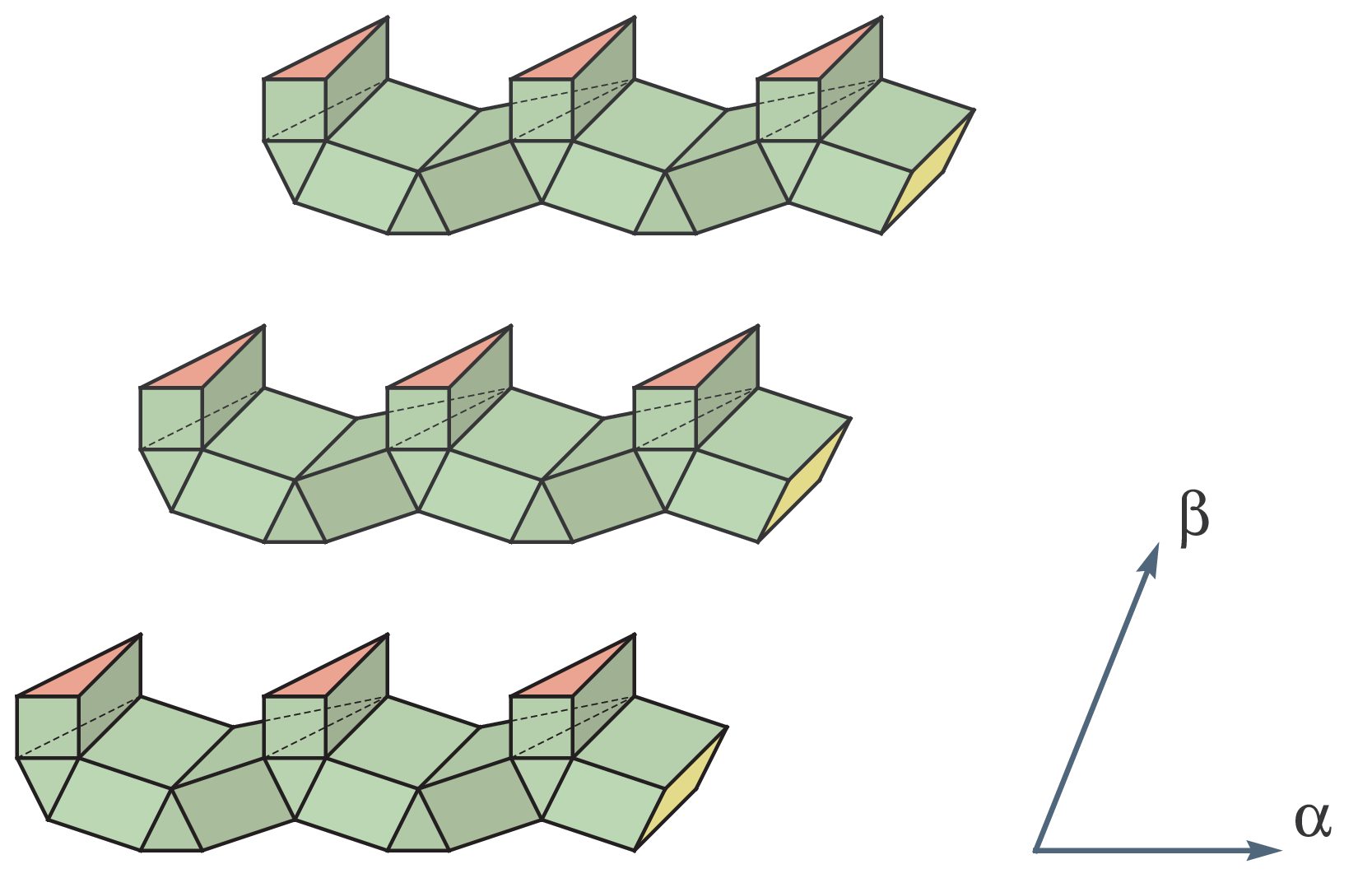}
   \caption{Non-example to the connectivity property in the \emph{First Case} for general
   doubly periodic surfaces. }
   \label{f:surface-not}
 \end{center}
\end{figure}

\medskip

\subsection{Proof of Theorem~\ref{t:not}}
Let $R'$ be the $\qqq$-subalgebra of $\ll$ obtained by adding all \ts $(\lambda,\lambda)^{-1}$, \ts $\lambda\in\Lap$,
to the subalgebra~$R$:
$$
R'=R\left[\left.(\lambda,\lambda)^{-1}\, \right\vert \, \lambda\in\Lap\.\right].
$$
Let $I'$ be the following ideal in $R'$: 
$$
I'=\left(\left.(\lambda,\lambda)^{-1}\, \right\vert \, \lambda\in\Lap\right)\vartriangleleft R'.
$$
Assume that $I'\neq R'$. Then, by Krull's theorem (see e.g.~\cite{AM}), there exists a maximal ideal~$I$, such that $I'\subset I$. Let $F=R'/I$. Since $R'$ contains $\qqq$, field $F$ must contain $\qqq$ as well and $\text{char}\, F = 0$. Let $\overbar{F}$ be an algebraic closure of $F$. The quotient homomorphism $R'\rightarrow F$ satisfies the conditions of Lemma~\ref{l:lang} for $\Omega=\overbar{F}$ so it can be extended to the place \ts $\phi:\ll\rightarrow\overbar{F}\cup\{\infty\}$. The quotient homomorphism is equal to 0 on all \ts $(\lambda,\lambda)^{-1}$, \ts $\lambda\in\Lap$. Therefore, the place $\phi$ is infinite on $(\lambda,\lambda)$ for all \ts $\lambda\in\Lap$.  This implies that the same holds for all non-zero \ts $\lambda\in\Lambda$. On the other hand, the quotient homomorphism is finite on~$R'$. Therefore, we get a contradiction with Lemma~\ref{l:finite}.   We conclude that the assumption that \ts $I'\neq R'$ \ts is false.

From above, we have that $I'=R'$. In particular, this implies that \ts $1\in I'$:
\begin{equation}\label{eq:1-ideal}
1\, = \, \sum\limits_{i=1}^M \, \frac {r_i} {(\lambda_{i1},\lambda_{i1})(\lambda_{i2},\lambda_{i2})\ldots (\lambda_{ip_i},\lambda_{ip_i})}\.,
\end{equation}
where all $\lambda_{ij}\in\Lap$, all $r_i \in R$, and $p_i\geq 1$. After multiplying by the least common multiple of all denominators, the left hand side of~\eqref{eq:1-ideal} becomes
$$
Z \, := \, \prod\limits_{j=1}^N \. (\lambda_j,\lambda_j) \, = \, \prod\limits_{j=1}^N \. \bigl(k_j\alpha+m_j\beta,k_j\alpha+m_j\beta\bigr)
\, = \, \prod\limits_{j=1}^N \. \bigl(k_j^2(\alpha,\alpha)+2k_j m_j(\alpha,\beta)+m_j^2(\beta,\beta)\bigr),
$$
where \ts $\lambda_j=k_j\alpha+m_j\beta$.

In the same manner we can write down the products in the right hand side of~\eqref{eq:1-ideal} times~$Z$.
We rewrite the equation via the entries of the Gram matrix of the lattice $\Lambda$, which are equal to \ts
$g_{11}=(\alpha,\alpha)=\ds^2$, $g_{22}=(\beta,\beta)=\dt^2$, and $g_{12}=(\alpha,\beta)=0$. We also use the fact that polynomial functions $r_i\in R$ take only rational values on doubly periodic unit triangular surfaces, and denote by $q_i\in\qqq$ the value of $r_i$ on the surface $(K,\theta)$.
We then have:
\begin{equation}\label{eq:poly-P}
\prod\limits_{j=1}^N \. \bigl(k_j^2 \ds^2+m_j^2\dt^2\bigr) \. - \. \sum\limits_{i=1}^M \. q_i \. \prod\limits_{j=1}^{N_i} \. \bigl(k_{ij}^2 \ds^2+m_{ij}^2\dt^2\bigr)\, = \, 0\ts,
\end{equation}
where all \ts $N_i<N$. Note that this is the only time in the proof we use the fact that we have unit triangles,
and that the periodicity vectors are orthogonal.

We conclude that the polynomial \ts $P(\ds^2,\dt^2)$ \ts formed by the equation~\eqref{eq:poly-P}
has rational coefficients.  Let \ts $x\gets \ds^2$ \ts and \ts $y\gets \dt^2$.
From above, the polynomial \ts $P(x,y)$ \ts is nonzero since
$$
\deg \. \prod\limits_{j=1}^N \. \bigl(k_j^2 \ts x \ts + \ts m_j^2\ts y\bigr) \. = \. N,
$$
and the degree of all other terms in~\eqref{eq:poly-P} have degrees \ts $N_i<N$.
This completes the proof. \ $\sq$

\subsection{Further applications}
Note that polynomials $P$ found in the proof of Theorem \ref{t:not}
are quite special.  In some cases, with a more careful analysis,
one can conclude non-existence of domes for some rhombi whose
diagonals are algebraically dependent over~$\qqq$. For example,
consider the rhombi whose ratio of diagonal lengths is algebraic: 

\begin{cor}\label{c:rhombi-further}
Let \ts $\ds\notin \overline{\qqq}$ \ts and \ts $\dt/\ds \in \overline{\qqq}$.
Then the unit rhombus \ts $\rho(\ds,\dt)$ \ts cannot be domed.
\end{cor}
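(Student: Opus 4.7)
The plan is to revisit the \emph{proof} of Theorem~\ref{t:not} and exploit the explicit form of the polynomial $P(x,y)$ produced there, rather than merely invoking the abstract conclusion $P(\ds^2,\dt^2)=0$. Suppose for contradiction that $\rho(\ds,\dt)$ admits a dome. Equation~\eqref{eq:poly-P} in the proof of Theorem~\ref{t:not} produces a polynomial of the specific form
$$P(x,y) \. = \. \prod_{j=1}^{N} \bigl(k_j^2 x + m_j^2 y\bigr) \ - \ \sum_{i=1}^{M} q_i \prod_{\ell=1}^{N_i} \bigl(k_{i\ell}^2 x + m_{i\ell}^2 y\bigr),$$
where $q_i \in \qqq$, every $N_i < N$, and each pair $(k_j,m_j)$, $(k_{i\ell},m_{i\ell})$ records the coefficients of a primitive lattice vector $\lambda \in \Lap$, so in particular $(k_j,m_j)\ne(0,0)$.

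Next I would set $r := \dt/\ds$, which by hypothesis is a positive element of $\overline{\qqq}$, and substitute $y = r^2 x$ to obtain the univariate polynomial
$$Q(x) \. := \. P(x,\ts r^2 x) \. = \. \Bigl(\prod_{j=1}^{N} (k_j^2 + r^2 m_j^2)\Bigr)\ts x^N \ + \ (\text{terms of $x$-degree strictly less than } N).$$
Since $r \in \overline{\qqq}$ and all $q_i \in \qqq$, we have $Q \in \overline{\qqq}[x]$. The critical step is to verify that $Q$ is nonzero. Because $r > 0$ and $(k_j,m_j)\ne(0,0)$ for every~$j$, each factor $k_j^2 + r^2 m_j^2$ is strictly positive; hence the leading coefficient of $Q$ is strictly positive. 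The contributions coming from the sum in $P$ carry $x$-degree at most $\max_i N_i < N$, so they cannot cancel this leading $x^N$ term.

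Finally, by Theorem~\ref{t:not} we have $P(\ds^2,\dt^2) = P(\ds^2, r^2 \ds^2) = Q(\ds^2) = 0$, so $\ds^2$ is a root of the nonzero polynomial $Q \in \overline{\qqq}[x]$. Hence $\ds^2 \in \overline{\qqq}$, and therefore $\ds \in \overline{\qqq}$, contradicting the hypothesis. The main obstacle, and the reason the bare statement of Theorem~\ref{t:not} does not suffice, is this: for a generic $P\in\qqq[x,y]$ the substitution $y = r^2 x$ may easily collapse $P$ to the zero polynomial (e.g.\ when $y - r^2 x$ divides $P$ and $r^2\in\qqq$). What rules out this collapse in our setting is the product structure of the top-degree form of $P$ in~\eqref{eq:poly-P} combined with primitivity of the lattice vectors $\lambda_j\in\Lap$ and the strict positivity of $r$.
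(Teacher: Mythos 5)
Your argument is correct and follows essentially the same route as the paper's own proof: both revisit the explicit product form of $P$ in~\eqref{eq:poly-P}, substitute $y=c^2x$ with $c=\dt/\ds\in\overline{\qqq}$, and observe that the degree-$N$ leading term $\prod_j(k_j^2+m_j^2c^2)\ts x^N$ survives because each factor is strictly positive (primitivity of the $\lambda_j$ plus $c>0$), forcing $\ds^2\in\overline{\qqq}$. Your write-up just makes explicit the non-vanishing check that the paper leaves implicit.
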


For example, the corollary implies that \ts $\rho\bigl(\frac1{\pi},\frac{1}{\pi}\bigr)$ \ts
and \ts $\rho\bigl(\frac{e}{\sqrt{7}},\frac{e}{\sqrt{8}}\bigr)$ \ts cannot be domed.

\begin{proof}
Suppose \ts $\rho(\ds,\dt) \in \cD_4$, and let \ts $c:=\dt/\ds\in \overline{\qqq}$.
Consider a polynomial $P(\ds^2,\dt^2)$,
as in the proof of Theorem~\ref{t:not}. Viewed as a polynomial in
\ts $x=\ds^2$ \ts over~$\overline{\qqq}$ the leading degree term
of~$P$ becomes
$$
\prod\limits_{j=1}^N  \. \bigl(k_j^2\ts x \. + \. m_j^2\ts c^2 x\bigr),
$$
so it still has a higher degree than all other terms. Therefore,
$\ds\in \overline{\qqq}$, a contradiction.
\end{proof}

\smallskip

The following is a generalization of the previous corollary:

\smallskip

\begin{cor}\label{c:rhombi-beyond1}
Let \ts $\ds\notin \overline{\qqq}$, and let $s^2$ and $t^2$ be algebraically dependent with an irreducible polynomial \ts
$Q(s^2,t^2)=0$.  Suppose \ts $Q^*$ is the highest degree component of $Q$. Then the unit rhombus \ts $\rho(\ds,\dt)$ \ts cannot be domed unless $Q^*$ is (up to a constant) a product of linear polynomials $(k^2x+m^2y)$, where $k, m$ are non-negative integers. Moreover, the rhombus can be domed only if any monomial of~$Q$ divides one of the monomials of $Q^*$.
\end{cor}

\smallskip

For example, the corollary implies that \ts
$\rho\bigl(\frac1{\pi},\frac{1}{\pi^2}\bigr)$ \ts cannot be domed. Indeed, the irreducible polynomial $x^2-y$ does not satisfy the last condition of Corollary~\ref{c:rhombi-beyond1} since the monomial $y$ does not divide a highest degree monomial.

\smallskip

\begin{proof}
Suppose \ts $\rho(\ds,\dt)$ can be domed. We note that $(s^2, t^2)$ is a root of both~$P$
from the proof of Theorem~\ref{t:not} and the irreducible polynomial~$Q$.
Either $P$ is divisible by~$Q$, or there are polynomials $A, B\in \overline{\qqq}[x,y]$
and a nonzero polynomial $D\in \overline{\qqq}[x]$ such that $AP+BQ=D$
(see the argument in \cite[$\S$1.6, Prop.~2]{Ful}).
The latter case contradicts our assumption on~$s$. We conclude that~$P$
is divisible by~$Q$. Therefore, the highest degree component of~$P$ given by
$$
\prod_{j=1}^N \. \bigl(k_j^2 \ts x \ts + \ts m_j^2\ts y\bigr)
$$
must be divisible by~$Q^*$, as desired.

Assume now there is a monomial of $Q$ that does not divide any monomial of~$Q^*$.
As we already know, $Q^*$ is a product of $x^u y^v$ and linear polynomials
\ts $(k^2x+m^2y)$, where $k, m$ are positive integers. If $\deg Q \. = n$,
then $Q^*$ contains all monomials $x^q y^r$ such that $q+r=n$ and $q\geq u$,
where $r\geq v$. A monomial of $Q$ that does not divide any monomial of~$Q^*$
must be either divisible by $x^{n-v+1}$ or $y^{n-u+1}$.
Without loss of generality assume the former.
Then the largest degree of $x$ in $Q$ is attained on a monomial not
from~$Q^{*}$. Since $P$ is divisible by $Q$, the largest degree of $x$
in~$P$ is attained on a monomial not from the highest total degree
component of~$P$. This clearly contradicts the composition of~$P$
in~\eqref{eq:poly-P}.  \end{proof}

\begin{rem}\label{r:strong}{\rm
The approach in the corollaries fails in two notable cases we cover in the next section. First,
by Proposition~\ref{p:221-rhombus} below, an isosceles triangle $\De$ with side lengths
$(2,2,1)$ can be domed if and only if a unit rhombus \ts $\rho(\frac 1 2, \sqrt{3})$ \ts can be domed.
Since the argument above is not applicable for \ts $\rho(\ds,\dt)$ \ts for
which \ts $\ds^2,\dt^2\in \qqq$, we cannot conclude that $\De$ cannot be domed,
cf.\ Conjecture~\ref{conj:221}.

The second example where the above approach is inapplicable is the case of planar rhombi
\ts $\rho\bigl(\ds,\dt\bigr)$, where \ts $\ds^2+\dt^2=4$, see~$\S$\ref{ss:big-planar}. In fact, one of the product terms
\ts $k_j^2\ds^2+m_j^2\dt^2 = (k_j^2-m_j^2)s^2 + 4m_j^2$  \ts of the leading degree
term in~$P$ can be equal to~$4$ when $k_j=\pm 1$ and $m_j=\pm 1$.
}\end{rem}

\bigskip

\section{Big picture}\label{s:big}

\subsection{Integer-sided triangles}\label{ss:big-integer}

It may seem from the proof of Theorem~\ref{t:not},
that only integral curves with non-algebraic diagonals
cannot be domed.  In fact, we believe that only very
few integral curves with algebraic diagonals can be domed.

\smallskip

\begin{conj}\label{conj:221}
An isosceles triangle $\De$ with side
lengths \ts $(2,2,1)$ \ts cannot be
domed.
\end{conj}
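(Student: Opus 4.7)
The plan is to invoke Proposition~\ref{p:221-rhombus} at the outset, reducing Conjecture~\ref{conj:221} to the statement that the unit rhombus \ts $\rho\bigl(\tfrac12,\sqrt{3}\bigr)$ \ts cannot be domed.  From there, the natural route is to push the place-theoretic machinery of Section~\ref{s:not} into the rational regime \ts $\ds^2,\dt^2\in\qqq$, which the existing argument does not handle (see Remark~\ref{r:strong}).

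Assume for contradiction that \ts $\rho\bigl(\tfrac12,\sqrt{3}\bigr)\in\cD_4$, and let \ts $(K,\theta)$ \ts be the doubly periodic unit triangular surface produced by Lemma~\ref{l:surface}, with orthogonal periodicity vectors \ts $\al,\be$ \ts of lengths \ts $1/2$ \ts and \ts $\sqrt{3}$.  Its Gram matrix has all rational entries, so the identity~\eqref{eq:poly-P} specializes to a numerical identity in~$\qqq$ with no immediate obstruction.  The key new input I would use is a non-Archimedean place: any extension of the $3$-adic valuation to a place \ts $\phi:\ll\rightarrow\overline{\qqq_3}\cup\{\infty\}$ \ts automatically satisfies the finiteness hypothesis of Lemma~\ref{l:finite} (since all squared edge lengths equal~$1$), while \ts $(\al,\al)=1/4$ \ts and \ts $(\be,\be)=3$ \ts have distinct $3$-adic valuations.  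For a primitive \ts $\la=k\al+m\be\in\Lap$, one computes \ts $(\la,\la)=(k^2+12m^2)/4$, whose $3$-adic valuation is controlled by the residue of $k$ modulo~$3$.

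The concrete attempt would reconfigure the ring \ts $R'$ \ts and the ideal \ts $I'$ \ts of Section~\ref{s:not} to track this $3$-adic data: instead of adjoining only the inverses \ts $(\la,\la)^{-1}$, one adjoins generators encoding the $v_3$-filtration on \ts $(\la,\la)$, and aims to show \ts $I'\ne R'$ \ts in the refined setting, producing a place sending every \ts $(\la,\la)$ \ts to~$\infty$ and thereby contradicting a suitable modification of the Main Lemma~\ref{l:finite}.  In parallel, a combinatorial attack would exploit that at each corner of the rhombus with interior angle \ts $\arccos(7/8)$ \ts the unit-triangle fan has at least two triangles, and a two-triangle fan forces a local unit rhombus \ts $\rho(1/2,\ast)$; iterating this structural constraint should yield a recursion one hopes to drive to impossibility.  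The hard part will be exactly what is flagged in Remark~\ref{r:strong}: Theorem~\ref{t:not} is insensitive to the specific rational values of \ts $(\ds^2,\dt^2)$, so any successful argument must inject genuinely new arithmetic beyond the transcendence obstruction.  Matching the $3$-adic filtration on \ts $(k^2+12m^2)/4$ \ts against the combinatorial constraints on the primitive vectors \ts $(k,m)\in\zz^2$ \ts that can appear in a realizable unit triangulation is where I expect the main difficulty to lie.
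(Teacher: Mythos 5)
You are attempting to prove a statement that the paper explicitly poses as an open conjecture: there is no proof of Conjecture~\ref{conj:221} anywhere in the paper, only the equivalence with the rhombus $\rho\bigl(\tfrac12,\sqrt{3}\bigr)$ (Proposition~\ref{p:221-rhombus}), the observation in Remark~\ref{r:strong} that the place-theoretic argument cannot reach rational squared diagonals, and the conditional implication from Conjecture~\ref{conj:closed-rigid}. Your opening reduction is therefore fine, but everything after it is a research program with an acknowledged gap, not a proof, and its central new idea does not work.

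Concretely, the $3$-adic place is a dead end. For $\la=k\al+m\be$ one has $(\la,\la)=(k^2+12m^2)/4$, and $v_3\bigl(k^2+12m^2\bigr)\ge 0$ while $v_3(4)=0$, so every $(\la,\la)$ is a $3$-adic integer and any place extending the $3$-adic valuation is \emph{finite} on every $(\la,\la)$. Such a place can never contradict the Main Lemma~\ref{l:finite}, which only asserts the \emph{existence} of one $\la$ with finite image; the contradiction scheme of Theorem~\ref{t:not} needs a place that is infinite on $(\la,\la)$ for \emph{all} primitive $\la$. Worse, no such place exists here at all within the hypotheses of the Main Lemma: if $\operatorname{char} F=0$ then $\phi$ is finite and injective on $\qqq$, so $\phi\bigl((\la,\la)\bigr)=(k^2+12m^2)/4\ne\infty$ automatically whenever the Gram entries are rational. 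Passing to residue characteristic $3$ would evade this, but then you leave the scope of Lemma~\ref{l:finite} and of the Connelly--Sabitov--Walz machinery it relies on. This is exactly the obstruction flagged in Remark~\ref{r:strong}: identity~\eqref{eq:poly-P} degenerates to a true numerical identity in $\qqq$ and carries no information. Your secondary combinatorial suggestion (the angle $\arccos(7/8)$ is the apex angle of the $(2,2,1)$ triangle, not an angle of the rhombus) is likewise only a heuristic with no termination argument. In short, the statement remains open, and your proposal does not close it.
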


\smallskip

As with many other domes on curves problems, this conjecture turned
out to be equivalent to that over a certain unit rhombus
(cf.~$\S$\ref{ss:finrem-tri} below).

\smallskip

\begin{prop}\label{p:221-rhombus}
Let $\De$ be an isosceles triangle  with side
lengths \ts $(2,2,1)$, and let \ts
$\rho_\di=\rho\bigl(\frac 1 2, \sqrt{3}\bigr)$.
Then  $\De$ can be domed if and only if $\rho_\di$ can be domed.
\end{prop}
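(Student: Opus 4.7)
The plan is to exhibit a single universal three-triangle ``patch'' that converts between a dome over $\De$ and a dome over $\rho_\di$ via boundary symmetric-difference. First, I would set up coordinates for $\De = [A,B,C]$ with $|AB|=|AC|=2$, $|BC|=1$, and let $M,N$ be the midpoints of $AB,AC$, so that $|AM|=|AN|=|MB|=|NC|=1$ and $|MN|=1/2$. In particular, the actual unit-edge boundary of any dome over $\De$ is the pentagon $[A,M,B,C,N]$, since the edges of $\De$ of length $2$ must each be subdivided at a vertex of the triangulation.

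The crux of the argument is to produce an auxiliary point $S\in\rr^3$ satisfying $|SM|=|SN|=|SB|=|SC|=1$ and $|SA|=\sqrt{3}$. By symmetry, such an $S$ must lie in the perpendicular bisector plane of both $MN$ and $BC$, which coincide since both segments are parallel and centered on the axis of symmetry of $\De$. Solving the remaining two constraints $|SM|=1$, $|SA|=\sqrt{3}$ in that plane reduces to a quadratic with positive discriminant; an explicit computation gives $S = (0,\sqrt{15}/10,\pm\sqrt{15}/5)$ in natural coordinates, so $S$ exists. With such an $S$, the space quadrilateral $[M,A,N,S]$ has four unit sides and diagonals $|MN|=1/2$, $|AS|=\sqrt{3}$, so it is congruent to $\rho_\di$.

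Next, define the patch $T := [S,B,M]\cup[S,B,C]\cup[S,C,N]$ as a union of three unit equilateral triangles, which are well-defined thanks to the distance equations above; the boundary of $T$ is the unit pentagon $[M,B,C,N,S]$. The final step is pure boundary bookkeeping. The pentagons $\partial\De=[A,M,B,C,N]$ and $\partial T=[M,B,C,N,S]$ share the three edges $MB,BC,CN$, and their symmetric difference is exactly the four-edge boundary of $\rho_\di=[M,A,N,S]$; symmetrically, $\partial \rho_\di$ and $\partial T$ share the two edges $SM,NS$, and their symmetric difference equals $\partial \De$. Thus, if $S_\De$ is a dome over $\De$, then $S_\De\cup T$ is a dome over $\rho_\di$; conversely, if $S_\rho$ is a dome over $\rho_\di$, then $S_\rho\cup T$ is a dome over $\De$. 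The only genuine obstacle is producing $S$ with all five prescribed distances at once; once that point is pinned down, the rest is formal, and the paper's convention of allowing arbitrary PL realizations (no embedding or immersion required) removes any concern about self-intersection of the glued surfaces.
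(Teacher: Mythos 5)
Your proof is correct and is essentially the paper's own argument: the three unit triangles $[SBM]$, $[SBC]$, $[SCN]$ you construct are exactly the three-triangle patch the paper attaches to $\De$ in its Figure, whose boundary together with $\partial\De$ yields $\rho_\di$ by symmetric difference. The only addition is your explicit coordinate verification that the apex $S$ with the five prescribed distances exists (and that $|AS|=\sqrt{3}$ comes out automatically), which the paper leaves to the figure.
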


\begin{proof}
Attach three unit triangles to~$\De$ as in Figure~\ref{f:tri-221}.
Observe that the boundary of the resulting surface is exactly~$\rho_\di$.
\end{proof}

\begin{figure}[hbt]
 \begin{center}
   \includegraphics[height=2.cm]{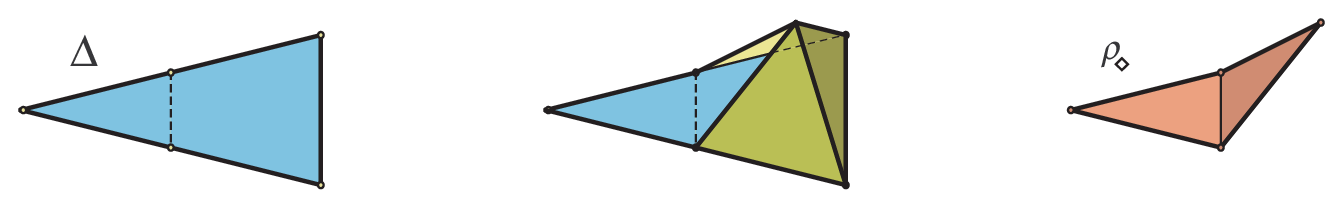}
   \caption{Proof of Proposition~\ref{p:221-rhombus}: \ts
   $\De\in \cD_5$ if and only if $\rho_\di\in \cD_4$.}
   \label{f:tri-221}
 \end{center}
\end{figure}

\smallskip

In a contrapositive fashion, let us show
that if \ts $\De\in \cD_5$, then all integral
triangles can be domed.

\smallskip

\begin{prop}\label{p:triangle}
Let $\De$ be an isosceles triangle  with side
lengths \ts $(2,2,1)$.  If $\De$ can be
domed, then so can every integer-sided triangle.
\end{prop}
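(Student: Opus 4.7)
The plan is strong induction on the perimeter $a+b+c$, with base cases $(1,1,1)$ (a single unit triangle), $(2,2,1)=\De$ (domed by hypothesis via $S_\De$), and $(2,2,2)$ (four unit triangles in the plane).

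A preliminary \emph{scaling principle} gets used throughout: if an integer triangle $T$ admits a dome $S_T$, then the scaled triangle $kT$ admits a dome for every positive integer $k$. Indeed, $kT$ subdivides planarly into $k^2$ congruent copies of $T$ via the triangular grid through integer points on its sides, and each such planar copy can be replaced by an isometric (and suitably reflected, for the inverted sub-triangles) image of $S_T$; the interior edges of the subdivision cancel in pairs, leaving $\partial(kT)$ as the boundary. Applied to $\De$, this yields domes over $(2k,2k,k)$ for every $k\ge 1$, and applied to the unit equilateral triangle, domes over every equilateral $(n,n,n)$.

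For the inductive step on a general integer triangle $(a,b,c)$, the aim is to split it into two integer sub-triangles across a shared ``cevian'' and induct on each half, then glue along that cevian. Since most integer triangles (e.g.\ $(3,3,2)$ or $(3,2,2)$) admit no integer-length planar cevian, I would realize the cevian as a non-planar unit-edged path; the $\De$-hypothesis then enters via Proposition~\ref{p:221-rhombus}, which converts $S_\De$ into a dome over the 3D unit rhombus $\rho\bigl(\frac{1}{2},\sqrt{3}\bigr)$. This rhombus acts as a flexible joint that lets one glue two planar sub-pieces together in 3D while their combined boundary remains the desired planar integer triangle $(a,b,c)$.

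The main obstacle I expect is designing this decomposition uniformly across all $(a,b,c)$; concretely, the small non-equilateral, non-$\De$-scaled cases such as $(3,2,2)$, $(3,3,2)$, $(4,3,2)$, and $(4,4,3)$ each likely require a dedicated explicit 3D construction using $S_\De$ (equivalently $\rho\bigl(\frac{1}{2},\sqrt{3}\bigr)$) as a flexible gadget, and only once those small cases are settled does the inductive reduction close cleanly. Once the inductive mechanism is in place, iteration together with the scaling principle exhausts all integer triangles.
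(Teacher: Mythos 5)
Your scaling principle is correct (the paper uses the same $k^2$-subdivision trick elsewhere), but it only produces domes over equilateral triangles and over $(2k,2k,k)$, which is very far from all integer triangles. The heart of the proposition is the general case, and there your argument stops at an acknowledged obstruction: as you note yourself, a generic integer triangle such as $(3,2,2)$ has no integer-length cevian, and the proposed fix --- realizing the cevian as a non-planar unit-edged path glued through the rhombus $\rho\bigl(\tfrac12,\sqrt{3}\bigr)$ --- is never turned into an actual construction. You do not specify the two sub-pieces, why their union would have the planar triangle $(a,b,c)$ as boundary, or how the ``flexible joint'' absorbs the length mismatch; and you concede that the small cases ``likely require a dedicated explicit 3D construction'' that you do not supply. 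As written, the inductive step is a plan, not a proof, so there is a genuine gap.

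The idea you are missing is to go \emph{out of the plane} not along a cevian but over an apex. If $z$ is a point at distance $k$ from all three vertices of $(a,b,c)$ (which exists once $k$ exceeds the circumradius), the three lateral faces of the resulting tetrahedron are the isosceles integer triangles $(k,k,a)$, $(k,k,b)$, $(k,k,c)$; domes over those three faces glue along the lateral edges into a dome over $(a,b,c)$. This reduces everything to isosceles triangles $(k,k,\ell)$, and those are bootstrapped from $\De=(2,2,1)$ by two explicit combination rules: a tetrahedron with base $(\ell,\ell,1)$ and lateral faces $(k,k,\ell),(k,k,\ell),(k,k,1)$ turns isosceles triangles with long base into ones with base $1$; and combining two copies of $(k,k,1)$, one $(k,k,\ell)$, and a planar trapezoid $(1,\ell,1,\ell+1)$ (a strip of $2\ell+1$ unit triangles) lengthens the base from $\ell$ to $\ell+1$. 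Alternating these rules yields all $(k,k,1)$, then all $(k,k,\ell)$, and then the apex construction finishes. No flexible gadget or rhombus $\rho\bigl(\tfrac12,\sqrt3\bigr)$ is needed; Proposition~\ref{p:221-rhombus} plays no role in this direction of the argument.
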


\begin{proof}
Whenever clear, we denote polygons with their edge length sequence.
Observe that all triangles \ts $(k,k,k)$ \ts and all isosceles trapezoids \ts
$(1,\ell, 1, \ell+1)$ \ts can be domed by a plane triangulation.
To construct domes over all integer-sided triangles, we  use the
following rules:

\smallskip

\hskip.15cm
$(1)$ \, for integer \ts $k>1$, \ts $1\le \ell<\sqrt{3}\ts k$, two copies of \ts $(k,k,1)$, one \ts $(k,k,\ell)$,
and a \ts $(1,\ell,1,\ell+1)$ \ts

\hskip.88cm
trapezoid, \ts give a triangle \ts $(k,k,\ell+1)$, via a pyramid over a trapezoid (see Figure~\ref{f:int-sided}).

\smallskip

\hskip.15cm
$(2)$ \, for integer \ts $k>1$, \ts $\ell<\sqrt{3}\ts k$, two copies of \ts $(k,k,\ell)$,
and one \ts $(k,k,1)$,  give  a triangle

\hskip.88cm
$(\ell,\ell,1)$ \ts  via a tetrahedron.

\smallskip

\begin{figure}[hbt]
 \begin{center}
   \includegraphics[width=15.6cm]{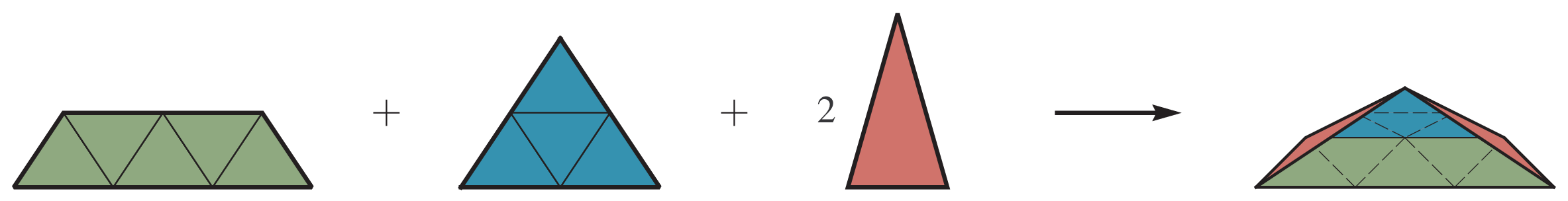}
   \caption{Construction in the first rule: \. $(2,2,1) \to^{(1)} (2,2,3)$.}
   \label{f:int-sided}
 \end{center}
\end{figure}

\noindent
We now construct all triangles \ts $(k,k,1)$ \ts one by one, alternating the rules above
in the following order:
$$
\De \. = \. (2,2,1) \to^{(1)} (2,2,3) \to^{(2)} (3,3,1) \to^{(1)} (3,3,4)\to^{(2)} (4,4,1) \to \ldots
$$
Next, we construct domes over general isosceles triangles \ts $(k,k,\ell)$,
for all \ts $1\le \ell<k$, as follows:
$$
(k,k,1) \to^{(1)} (k,k,2) \to^{(1)} (k,k,3) \to^{(1)} \ldots
$$
Finally, we can span $(a,b,c)$ using triangles $(k,k,a)$, $(k,k,b)$ and $(k,k,c)$,
for $k \ge \max\{a,b,c\}$ large enough.  This completes the proof.
\end{proof}

\smallskip

\begin{rem}{\rm
Suppose, contrary to Conjecture~\ref{conj:221}, that
a triangle \ts $(2,2,1)$ \ts can be domed.  That would
easily imply Theorem~\ref{t:cyclic}. Indeed, let $\ell$ be an
integer greater than the radius of $rQ_n$.
By Proposition~\ref{p:triangle}, triangle \ts
$(\ell,\ell,r)$ \ts can also be domed. Symmetrically attach these
triangles to all edges in~$rQ_n$, to form a pyramid over~$rQ_n$.
}\end{rem}

\medskip

\subsection{Flexible surfaces}
Let \ts $S\ssu \rr^3$ \ts be a PL-surface homeomorphic to
a sphere, and whose faces are unit triangles.  We say that
$S$ is a \emph{closed dome}. Such $S$ is called \emph{flexible},
if there is a continuous family \ts $\{S_t, t\in [0,1]\}$ \ts
of (intrinsically) isometric but non-congruent closed domes;
closed dome~$S$ is called \emph{rigid} otherwise.

We say that an integral curve is \emph{degenerate} if it has two vertices
that coincide.  Let $\ga \ssu S$ be a closed non-degenerate integral
curve along the edges of~$S$.  We say that $\ga$ is a \emph{separating belt}
if $S\sm \ga$ is disconnected.  We say that a dome $S$ \emph{flexes}
$\gamma$, if $S$ is flexible with a continuous family \ts
$\{\ga_t \ssu S_t, t\in [0,1]\}$ \ts
of non-congruent integral curves.

\smallskip

\begin{conj} \label{conj:closed-rigid}
Let $S$ be a closed flexible dome, and $\ga \ssu S$ be a non-degenerate
separating belt.  Then $S$ does not flex~$\ga$.
\end{conj}

\smallskip

Curiously, this general conjecture implies Conjecture~\ref{conj:221},
which at first glance might seem unrelated.

\smallskip

\begin{prop}\label{p:closed-rigid}
Conjecture~\ref{conj:closed-rigid} implies Conjecture~\ref{conj:221}.
\end{prop}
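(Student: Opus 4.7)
The plan is to prove the contrapositive: assuming the triangle $\De$ with side lengths $(2,2,1)$ \emph{can} be domed, I will construct a flexible closed dome, contradicting Conjecture~\ref{conj:closed-rigid}. The first step is to invoke Proposition~\ref{p:triangle}, which upgrades the hypothesis to the statement that \emph{every} integer-sided triangle can be domed. A closer inspection of the explicit constructions in that proof shows that the building blocks used there (plane triangulations of $(k,k,k)$ and of the trapezoid $(1,\ell,1,\ell+1)$, together with the tetrahedral piece in rule~(2)) are all topological discs, and the gluings take place along boundary arcs. Hence every integer-sided triangle admits a dome that is itself homeomorphic to a disc.

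Next, I fix a \emph{Bricard octahedron} $B \subset \rr^3$ whose $12$ edges all have integer length. Bricard octahedra are classical examples of nontrivially flexible (self-intersecting) polyhedral spheres with $8$ triangular faces and $6$ vertices, and they come in three parameter families. Within each such family one can specialize the free real parameters to rational values so that all six edge-length orbits are rational, and a global integer scaling then produces the desired $B$. Every face of $B$ is now an integer-sided triangle, so by the previous step admits a disc-dome. I glue these eight disc-domes along the $1$-skeleton of $B$: since every edge of $B$ has integer length, its subdivision into unit segments is canonical and matches on both sides, so the triangulations of adjacent face-domes agree along common edges. Topologically, the result $\widehat B$ is eight discs glued along the edge-graph of an octahedron, hence a sphere; its facets are unit equilateral triangles; and self-intersections are permitted by the conventions laid out in the introduction. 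Thus $\widehat B$ is a closed dome.

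Finally, I transfer the Bricard flex to $\widehat B$. Along the nontrivial one-parameter family $\{B_t\}$ of intrinsically isometric but pairwise non-congruent realizations of $B$, each face keeps its side lengths and is merely being rigidly moved in $\rr^3$. The intrinsic triangulation of the disc-dome over each face does not change, so the dome may be rigidly transported together with its carrier face. This produces a continuous one-parameter family $\{\widehat B_t\}$ of pairwise non-congruent, intrinsically isometric realizations of $\widehat B$, exhibiting $\widehat B$ as a flexible closed dome and contradicting Conjecture~\ref{conj:closed-rigid}.

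The step I expect to be the main obstacle is the second one, namely producing an explicit Bricard octahedron with integer edge lengths. It amounts to verifying that rational (hence, after scaling, integer) edge-length specializations occur in at least one of the classical Bricard parameter families; once such $B$ is in hand, the remaining steps are routine bookkeeping carried out uniformly for all face-domes simultaneously.
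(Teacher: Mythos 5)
Your proof is correct and follows essentially the same route as the paper: upgrade the hypothesis via Proposition~\ref{p:triangle}, dome the faces of an integer-edged flexible Bricard octahedron, and transport the face-domes rigidly along the Bricard flex. The one step you flag as the main obstacle --- producing a Bricard octahedron with integer edges --- is resolved in the paper by a concrete choice rather than by rational specialization of the general Bricard families: four copies each of the triangles $(3,7,7)$ and $(4,7,7)$ assemble into a line-symmetric Bricard octahedron (equatorial antiparallelogram with side lengths $3,4,3,4$ and both apexes at distance $7$ from the equator vertices). One small overreach in your writeup: the claim that every integer-sided triangle admits a dome homeomorphic to a disc does not follow from Proposition~\ref{p:triangle} alone, since the hypothesized dome over $(2,2,1)$ --- the base of that construction --- is not assumed to be a disc; the paper silently ignores the sphere-topology requirement in Conjecture~\ref{conj:closed-rigid} as well, so this is at worst a shared imprecision rather than a gap specific to your argument.
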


\begin{proof}
By contradiction, suppose Conjecture~\ref{conj:221} is false.
In other words, suppose triangle $\De$ with side
lengths \ts $(2,2,1)$ \ts can be domed.
By Proposition~\ref{p:triangle}, then so can
every integer-sided triangle, including triangles
with sides $(3,7,7)$ and $(4,7,7)$, respectively.
Four copies of each triangle can be attached to form
a flexible \emph{Bricard octahedron} (see e.g.~\cite[$\S$30.4]{Pak}),
refuting Conjecture~\ref{conj:closed-rigid}.
\end{proof}

\medskip

\subsection{Planar unit rhombi}\label{ss:big-planar}
Denote by $\cA$ the set of all $a\ge 0$ for which the planar unit
rhombus \ts $\rho(a,\sqrt{4-a^2})$ \ts can be domed.
It follows from Lemma~\ref{l:rhombus} that $\cx \subseteq \cA$,
so~$\cA$ is infinite, where $\cx$ is defined in~\eqref{eq:cx}.

\smallskip

\begin{conj}\label{conj:set-A}
Set $\cA$ is countable.
\end{conj}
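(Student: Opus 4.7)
The plan is to write $\cA = \bigcup_K \cA_K$, where $K$ ranges over the countably many isomorphism classes of finite simplicial $2$-complexes with a designated boundary $4$-cycle, and
\[
\cA_K \. := \. \bigl\{ a \in [0,2] \,:\, \rho\bigl(a,\sqrt{4-a^2}\bigr) \text{ is spanned by a realization of } K \text{ with unit triangular faces} \bigr\}.
\]
It then suffices to show each $\cA_K$ is at most countable.

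\textbf{Step 2: semi-algebraic structure.} The set of maps $V(K) \to \rr^3$ such that every face of $K$ is realized as a unit equilateral triangle and the four boundary vertices are coplanar forms a real algebraic subvariety $\wt V_K \ssu \rr^{3|V(K)|}$, cut out by the edge-length equations $|v_iv_j|^2=1$ together with the determinantal flatness condition on the four boundary vertices, all defined over $\qqq$. The map $\wt V_K \to \rr_{\ge 0}$ sending a realization to its boundary squared diagonal $|v_1v_3|^2$ is polynomial, so its image is semi-algebraic; hence $\cA_K$ is semi-algebraic in $\rr$. Every semi-algebraic subset of $\rr$ is a finite union of points and open intervals, so either $\cA_K$ is finite or $\cA_K$ contains a non-degenerate open interval.

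\textbf{Step 3: ruling out intervals by reflection.} Suppose for contradiction that $\cA_K \supseteq (a_1,a_2)$. Then we obtain a continuous one-parameter family of $K$-type domes $\{D_a\}$, each with boundary the planar rhombus $\rho\bigl(a,\sqrt{4-a^2}\bigr)$ lying in a plane~$\Pi_a$. Reflecting $D_a$ through $\Pi_a$ yields an isometric dome $D_a^-$ with the same boundary, and gluing along the common $4$-cycle produces a closed polyhedral surface $\wh D_a := D_a \cup D_a^-$ whose faces are all unit equilateral triangles. The combinatorial type and intrinsic metric of $\wh D_a$ are independent of~$a$, but the ambient Euclidean distance between opposite equatorial vertices equals~$a$ and varies continuously, so $\wh D_a$ admits a non-trivial isometric flex. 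When $D_a$ is a topological disc, $\wh D_a$ is a topological sphere---a closed dome in the sense of $\S$\ref{s:big}---whose flexibility contradicts Conjecture~\ref{conj:closed-rigid}. Hence each $\cA_K$ is finite, and $\cA$ is countable.

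\textbf{Main obstacle.} Step~3 is conditional on the unproven Conjecture~\ref{conj:closed-rigid}, and requires extending it to closed unit-triangulated surfaces of arbitrary topology whenever $D_a$ is not a disc. An unconditional argument would seem to require strengthening the places-of-fields machinery of Section~\ref{s:not} to produce, from any unit triangulation of a planar unit rhombus, a nonzero polynomial $P \in \qqq[x]$ with $P(a^2)=0$. As Remark~\ref{r:strong} explains, the Theorem~\ref{t:not} polynomial degenerates on the planar locus precisely at the four primitive lattice vectors $\pm(1,1),\pm(1,-1)$, whose squared norms collapse to the constant~$4$ there; controlling this degeneration within the Krull-theorem framework appears to be the essential technical difficulty.
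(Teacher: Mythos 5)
The statement you are asked to prove is an open conjecture in the paper: the authors do not prove that $\cA$ is countable, they only establish the conditional result that Conjecture~\ref{conj:closed-rigid} (rigidity of closed domes) implies it. Your proposal is, in substance, that same conditional argument, and you are commendably explicit that Step~3 rests on the unproven rigidity conjecture. So as a proof of the conjecture there is a genuine gap, but it is the gap the authors themselves leave open, not an oversight peculiar to your write-up. Comparing routes: the paper fixes a combinatorial type occurring for uncountably many $a\in\cA$, uses compactness and the semi-algebraic/closed structure of the realization space $\cM$ to extract a converging sequence $S_{a_n}\to S_a$ and hence a continuous deformation on an interval $[a-\ep,a]$ or $[a,a+\ep]$; you instead observe directly that each $\cA_K$ is semi-algebraic in $\rr$, hence finite or containing an interval. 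These are essentially equivalent; in both cases one should really invoke a semi-algebraic selection or curve-selection argument to pass from "a realization exists for each $a$ in an interval" to "a continuous family $\{D_a\}$ exists," a point both you and the paper gloss over but which is standard. Your doubling-by-reflection is the same device as the paper's "attach two copies of $S_x$ along the rhombus boundary."

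Two further remarks. First, you correctly flag that when $D_a$ is not a disc the double $\wh D_a$ is not a sphere, so Conjecture~\ref{conj:closed-rigid} as literally stated (for closed domes homeomorphic to a sphere) does not apply; the paper's own proof of the implication silently has the same issue, so your version is if anything more careful. Second, your closing diagnosis of why an unconditional proof is hard --- that the polynomial produced by the places machinery of Section~\ref{s:not} degenerates on the planar locus $s^2+t^2=4$ because the factors $k_j^2 s^2+m_j^2 t^2$ with $k_j=\pm1$, $m_j=\pm1$ collapse to the constant $4$ --- matches Remark~\ref{r:strong} exactly and is the right way to frame the obstruction. In short: your argument is a correct proof of the implication from Conjecture~\ref{conj:closed-rigid}, essentially the paper's own, but it is not (and cannot currently be) an unconditional proof of the statement.
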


\smallskip

The following result is our only evidence in favor of this conjecture.

\smallskip

\begin{prop}
Conjecture~\ref{conj:closed-rigid} implies
Conjecture~\ref{conj:set-A}.
\end{prop}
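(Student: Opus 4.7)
The plan is to argue by contrapositive: assuming $\cA$ is uncountable, I would extract a flexible closed dome, contradicting Conjecture~\ref{conj:closed-rigid}. For each $a \in \cA$, fix a dome $S_a$ over the planar rhombus $\rho(a, \sqrt{4-a^2})$. The doubling construction below needs $S_a$ to be homeomorphic to a disc, so that reflecting $S_a$ across the plane of the boundary rhombus produces a simplicial sphere. The first step is therefore to pass, if necessary, to an uncountable subset $\cA' \subseteq \cA$ of parameters admitting disc-type domes. I expect this reduction to be the main obstacle, in analogy with the extra work needed to go from Proposition~\ref{p:not} to Theorem~\ref{t:not}; a possible workaround is to allow $S_a$ of arbitrary topology and invoke a higher-genus strengthening of Conjecture~\ref{conj:closed-rigid}, which the remark after the conjecture suggests the authors find plausible.

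Since there are only countably many combinatorial types of finite simplicial discs, pigeonhole yields a single type $K$ realized by $S_a$ for uncountably many $a \in \cA'$. Let $\mathcal R_K$ denote the moduli space of realizations of $K$ in $\rr^3$ with unit triangular faces, unit boundary edges, and coplanar boundary vertices, taken modulo rigid motions; this is a real semi-algebraic set cut out by finitely many polynomial equations in the vertex coordinates. The map $\mu\colon \mathcal R_K \to \rr_{\ge 0}$ assigning the distance between the two fixed opposite boundary vertices is semi-algebraic, and by hypothesis its image is uncountable. A semi-algebraic subset of $\rr$ is a finite union of points and intervals, so $\mu(\mathcal R_K)$ contains a non-trivial interval; this interval is covered by $\mu$ restricted to a single connected component of $\mathcal R_K$. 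Since connected semi-algebraic sets are path-connected, one may extract a continuous arc $\{S_t\}_{t \in [0,1]}$ inside that component along which $\mu(S_t)$ is non-constant.

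Finally, double each member by reflection: set $\tilde S_t := S_t \cup S_t'$, where $S_t'$ is the reflection of $S_t$ across the plane of its planar boundary rhombus. Combinatorially, $\tilde S_t$ is the simplicial sphere $\tilde K$ formed by gluing two copies of $K$ along their common boundary $4$-cycle, so every $\tilde S_t$ is a closed dome in the sense of~$\S$\ref{s:big}. The family $\{\tilde S_t\}$ varies continuously with~$t$, all members are intrinsically isometric (they share the combinatorial sphere $\tilde K$ and all faces are unit triangles), and the distance between the two vertices of $\tilde K$ corresponding to the $a$-diagonal of $\rho$ equals $\mu(S_t)$, which is non-constant in~$t$. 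Hence distinct $\tilde S_t$ cannot differ by a rigid motion, so $\{\tilde S_t\}$ is a non-trivial flex of $\tilde S_0$, contradicting Conjecture~\ref{conj:closed-rigid}.
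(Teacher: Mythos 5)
Your proposal follows essentially the same route as the paper: pigeonhole over the countably many combinatorial types to fix one type realized for uncountably many $a\in\cA$, use the semi-algebraic structure of the realization space to extract a continuous one-parameter family with non-constant diagonal, and double along the planar boundary rhombus to contradict rigidity of closed domes. The one technical difference is how the arc is produced: the paper extracts a convergent sequence $S_{a_n}\to S_a$ from compactness of $\cM$ and then appeals to $\cM$ being closed and semi-algebraic to upgrade this to a deformation $\{S_x\}$ on a one-sided interval, whereas you argue via Tarski--Seidenberg that the image of the diagonal map $\mu$ is an uncountable semi-algebraic subset of $\rr$, hence contains an interval met by the image of a single (path-connected) component. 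Your version is, if anything, the cleaner way to justify the step the paper states rather tersely. As for the topological worry you flag: it is legitimate that a dome $S_a$ need not be a disc, in which case the doubled surface is not a sphere and Conjecture~\ref{conj:closed-rigid} as literally stated does not apply; but note that the paper's own proof has exactly the same issue and simply attaches two copies along the rhombus boundary without comment, so this is a shared (and minor, in the authors' view) gloss rather than a defect specific to your argument. Neither of your proposed fixes is needed to match the paper's level of rigor.
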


\begin{proof}
By contradiction, suppose Conjecture~\ref{conj:set-A} is false. Since there is a countable number of combinatorial types of triangulated surfaces with quadrilateral boundary, there is one type with infinitely many boundary planar rhombi. Suppose coordinates of the vertices of a rhombus are $(\pm s/2, 0)$ and $(0,\pm t/2)$. The space of surfaces of this particular combinatorial type whose boundary is a planar rhombus is an algebraic set $\cM$ for $s, t$ and coordinates of all other vertices of a dome.

As an algebraic set, $\cM$ must have a finite number of connected components. Since there are infinitely many values of $s$ for points of $\cM$, there are two values $s_1$ and $s_2$ corresponding to two points in the same component. Then a path in this component between these two points provides a flex of a dome. Attaching two copies of a dome along the rhombus boundary, gives a nontrivial deformation of the closed dome. This refutes Conjecture~\ref{conj:closed-rigid}.
\end{proof}

Finally, by analogy with Conjecture~\ref{conj:221}, we believe the following claim.

\begin{conj}\label{conj:12-rho}
We have:  \ts $1/2 \notin \cA$.  In other words, the planar
unit rhombus \ts $\rho_\lozenge:=\rho(1/2, \sqrt{15}/2)$ \ts cannot be domed.
\end{conj}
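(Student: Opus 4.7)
The plan is to strengthen the place-theoretic machinery of Section~\ref{s:not} so as to extract an obstruction that survives on the planar locus $\ds^2 + \dt^2 = 4$. As Remark~\ref{r:strong} explains, Theorem~\ref{t:not} is silent here because both squared diagonals of $\rho_\lozenge$ are rational, and the primitives $\la = \pm\alpha \pm \beta$ satisfy $(\la,\la) = 4$, so $(\la,\la)^{-1}$ is a unit and never enters the ideal $I'$ in a nontrivial way. All other primitives $\la = k\alpha + m\beta$ yield $(\la,\la) = (k^2 + 15 m^2)/4$, a rational whose numerator is constrained to the form $x^2 + 15 y^2$. The first step I would pursue is a geometric lemma: in any doubly periodic triangular surface with unit faces and lattice of Gram matrix $\mathrm{diag}(1/4, 15/4)$ arising via Lemma~\ref{l:surface} from a dome of $\rho_\lozenge$, there must exist an edge whose squared length involves a primitive $\la$ with $(k, m) \neq (\pm 1, \pm 1)$; the heuristic is that the short diagonal $1/2$ is too small for the fundamental domain to be patched by unit triangles arranged only along the two coordinate directions of~$\La$.

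With such a lemma in hand, I would rerun the argument of the Main Lemma~\ref{l:finite} while tracking the explicit integers $k_j^2 + 15 m_j^2$ in the leading and lower-order terms of~\eqref{eq:poly-P}. The target is a rational identity of the shape $\sum_i q_i \prod_j (k_{ij}^2 + 15 m_{ij}^2)^{-1} = 1$ with $q_i \in \qqq$; reducing modulo a carefully chosen prime $p \in \{2, 3, 5\}$ and using the restricted residues represented by $x^2 + 15 y^2$, one aims for a $p$-adic contradiction. A complementary strategy is to reflect a hypothetical dome of $\rho_\lozenge$ across its plane, yielding a symmetric closed unit-triangular polyhedron $\Sigma$, and to show that $\Sigma$ admits a one-parameter symmetric flex deforming the equator along the curve $\ds^2 + \dt^2 = 4$; then Conjecture~\ref{conj:closed-rigid} supplies the contradiction. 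Alternatively, an argument modeled on the implication ``Conjecture~\ref{conj:closed-rigid} $\Rightarrow$ Conjecture~\ref{conj:set-A}'' might reduce Conjecture~\ref{conj:12-rho} to the countability of $\cA$ by showing that $1/2 \in \cA$ forces a whole neighborhood of $1/2$ to lie in $\cA$.

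The main obstacle in the first route is the combinatorial lemma: the places-of-fields framework is algebraically blind to the planar constraint $\ds^2 + \dt^2 = 4$, so extracting a usable obstruction requires a geometric or arithmetic input foreign to Section~\ref{s:not}. In the closed-dome route, the difficulty is upgrading a potential infinitesimal symmetric flex of $\Sigma$ to an honest one-parameter deformation, a rigidity problem entangled with Conjecture~\ref{conj:221} through Proposition~\ref{p:221-rhombus}. Given this entanglement, I would expect any robust proof of Conjecture~\ref{conj:12-rho} to simultaneously shed light on the $(2,2,1)$ triangle case, and conversely a solution of Conjecture~\ref{conj:221} to point the way here.
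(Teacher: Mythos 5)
The statement you are addressing is an open conjecture in the paper: the authors give no proof of Conjecture~\ref{conj:12-rho}, only circumstantial evidence (the implication from non-domability of \ts $2\rho_\lozenge$, and the web of implications around Conjectures~\ref{conj:221}, \ref{conj:closed-rigid} and~\ref{conj:set-A}). Your proposal is likewise not a proof: each of your three routes terminates at an unproven step, so there is nothing here that settles the conjecture. That is worth stating plainly before discussing the individual routes.

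On the first route, your diagnosis of why Theorem~\ref{t:not} is silent is correct and matches Remark~\ref{r:strong}, but the proposed fix has a structural problem beyond the missing lemma. In the proof of Theorem~\ref{t:not} the ideal $I'$ is generated by $(\la,\la)^{-1}$ over \emph{all} primitive $\la\in\Lap$; since $(\alpha+\beta,\alpha+\beta)=4$ is a nonzero rational, $I'=R'$ trivially and the identity~\eqref{eq:1-ideal} degenerates to something like $1=4\cdot(\alpha+\beta,\alpha+\beta)^{-1}$, whose associated polynomial is just the planar relation $x+y-4$. Your ``geometric lemma'' does not repair this: all edges of the surface have $\bl_{ab}=1$ (the faces are unit triangles), so edges do not ``involve'' primitive lattice vectors at all --- the primitives enter only through the period lattice --- and even if some stronger finiteness statement held for a $\la$ with $(k,m)\neq(\pm1,\pm1)$, you would need to restrict the generating set of $I'$ accordingly, which amounts to proving a substantially stronger version of the Main Lemma~\ref{l:finite} with no indicated mechanism. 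The proposed $p$-adic contradiction is not carried out and it is unclear what identity it would be applied to. The second and third routes are conditional on Conjectures~\ref{conj:closed-rigid} and~\ref{conj:set-A} respectively, both of which are open in the paper; moreover, the reflection-and-flex idea requires producing an actual one-parameter flex of the doubled surface, which you acknowledge you cannot do, and the reduction to countability of $\cA$ would require showing that $1/2\in\cA$ forces an interval in $\cA$, for which no argument is given. In short: the obstacles you identify are real, but identifying them is where the work begins, not where it ends.
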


There is a nice connection between these two conjectures.

\begin{prop}
If the planar integral rhombus \ts $2\rho_\lozenge$ \ts cannot
be domed, then both Conjecture~\ref{conj:221} and
Conjecture~\ref{conj:12-rho} are true.
\end{prop}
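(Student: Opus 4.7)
The plan is to prove both conjectures by contrapositive via a simple tile-and-glue construction, exhibiting a dome over $2\rho_\lozenge$ from domes over smaller pieces. Observe that $2\rho_\lozenge=\rho(1,\sqrt{15})$ is a planar rhombus with edge length $\sqrt{(1/2)^2+(\sqrt{15}/2)^2}=2$, short diagonal of length $1$, and long diagonal of length $\sqrt{15}$.

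First, for Conjecture~\ref{conj:221}, I assume the triangle $\De$ with side lengths $(2,2,1)$ can be domed. Cutting $2\rho_\lozenge$ along its short diagonal of length $1$ produces two congruent copies of $\De$. Doming each copy and gluing the two domes along the shared unit edge (the short diagonal) yields a PL surface with unit equilateral triangle faces whose boundary is the union of the four length-$2$ edges of $2\rho_\lozenge$, each realized as two collinear consecutive unit edges. Hence $2\rho_\lozenge$ can be domed, a contradiction.

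Second, for Conjecture~\ref{conj:12-rho}, I assume $\rho_\lozenge=\rho(1/2,\sqrt{15}/2)$ can be domed. Label the vertices of $2\rho_\lozenge$ by $A,B,C,D$ in cyclic order, its center by $O$, and the midpoints of its sides by $M_1,M_2,M_3,M_4$. The four corner quadrilaterals $AM_1OM_4$, $BM_2OM_1$, $CM_3OM_2$, $DM_4OM_3$ tile $2\rho_\lozenge$. By the midsegment theorem applied to the triangles formed by one diagonal and two sides of $2\rho_\lozenge$, each corner quadrilateral has all four sides of length $1$ and diagonals $1/2$ and $\sqrt{15}/2$, hence is a copy of $\rho_\lozenge$. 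Doming each corner rhombus and gluing the four resulting domes along the shared unit edges $OM_1,OM_2,OM_3,OM_4$ yields a dome over $2\rho_\lozenge$, again a contradiction.

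The only technical point is that each gluing is legitimate. In the paper's setup, domes are PL surfaces in $\rr^3$ with no embedding or immersion requirement, so the individual domes can be placed with boundaries matching in the plane of $2\rho_\lozenge$ while their interiors extend independently into $\rr^3$. Since each shared edge has length exactly $1$ and appears as a single face-edge in each adjacent dome, the identification of shared edges is automatic. Consequently no real obstacle arises; the proposition reduces to identifying the two tilings described above, in the same spirit as the equivalence proved in Proposition~\ref{p:221-rhombus}.
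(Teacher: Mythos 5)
Your proof is correct and takes essentially the same approach as the paper: exhibit a tiling of $2\rho_\lozenge$ by copies of $\De$ and a tiling by copies of $\rho_\lozenge$, then glue the corresponding domes along the shared unit edges to contradict the hypothesis. (Indeed your count of \emph{two} copies of $\De$, obtained by cutting along the short diagonal, is the right one --- the paper's one-line proof says ``four copies'' of $\De$, which an area computation shows to be a slip.)
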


\begin{proof}
Observe that \ts $2\rho_\lozenge$ \ts can be tiled with four copies of \ts
$\rho_\lozenge$.  Similarly, \ts $2\rho_\lozenge$ \ts can be tiled with
two copies of the triangle $\De$ with sides $(2,2,1)$.  This implies the result.
\end{proof}

\medskip

\subsection{Space colorings}\label{ss:big-color}
Denote by $\cR^3$ a \emph{unit distance graph} of~$\rr^3$, i.e.\
a graph with vertices points in~$\rr^3$ and edges pairs
$(x,y)\in \rr^3\times\rr^3$ such that $|xy|=1$. Questions about
colorings of~$\cR^3$ avoiding certain subgraphs are
the main subject of the \emph{Euclidean Ramsey Theory},
see e.g.~\cite{Gra,Soi}.

\smallskip
Denote by $\chi:\cR^3\to \{1,2,3\}$ a \emph{coloring} of~$\cR^3$.
We say that $[xyz] \ssu \rr^3$ is a \emph{rainbow triangle} in~$\chi$,
if \ts $[xyz]$ is a unit triangle, and \ts $\chi(x)=1$, \ts $\chi(y)=2$,
\ts $\chi(z)=3$.

\begin{prop} \label{p:color}
Let \ts $\rho = [uvwx] \ssu \rr^3$ \ts be a
unit rhombus.  Suppose there is a coloring \ts
$\chi:\cR^3\to \{1,2,3\}$ \ts with no rainbow triangles,
and such that \ts $\chi(u)=\chi(v)=1$, \ts $\chi(w)=2$,
\ts $\chi(x)=3$.  Then $\rho$ cannot be domed.
\end{prop}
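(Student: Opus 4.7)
The plan is to argue by contradiction using a simple mod-$2$ parity count on edges of a hypothetical dome. Suppose $\rho$ is spanned by a unit triangulation~$S$. Every vertex of $S$ is realized at a point of $\rr^3$ and so inherits a color via~$\chi$; every edge of~$S$ has unit length and so is an edge of~$\cR^3$; every face of~$S$ is a unit equilateral triangle. The no-rainbow hypothesis therefore forces each face of~$S$ to carry at most two colors on its three vertices. (Nothing here requires the realization $S \to \rr^3$ to be an embedding: the coloring is pulled back to the abstract simplicial complex, and two vertices that happen to coincide in $\rr^3$ receive the same color, which is harmless.)

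For any pair of distinct colors $i\ne j\in\{1,2,3\}$, let $E_{ij}\ssu S$ be the set of edges whose endpoints are colored $i$ and $j$. A face with color multiset $\{a,a,b\}$ contributes exactly $0$ or $2$ edges to $E_{ij}$ (namely $2$ iff $\{a,b\}=\{i,j\}$), and a monochromatic face contributes $0$. Double-counting face-edge incidences, with each interior edge lying in two faces and each boundary edge in one, yields
\[
0 \ \equiv \ \sum_{f} |f\cap E_{ij}| \ = \ 2\,\bigl|E_{ij}\cap \mathrm{int}(S)\bigr| \. + \. \bigl|E_{ij}\cap \partial S\bigr| \pmod{2},
\]
so $|E_{ij}\cap \partial S|$ must be even for every choice of $\{i,j\}$.

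Now inspect the boundary $\partial S = \rho = [uvwx]$ with color sequence $(1,1,2,3)$: its four edges have endpoint colors $(1,1),(1,2),(2,3),(3,1)$. In particular the edge $vw$ is the unique boundary edge in $E_{12}$, giving $|E_{12}\cap\partial S|=1$, which is odd. This contradicts the parity conclusion of the previous paragraph, so no such~$S$ exists and $\rho$ cannot be domed. I do not expect any real obstacle: the argument is essentially the triviality of the coboundary of the characteristic function of a color class, combined with the combinatorial observation that the prescribed colors on $u,v,w,x$ put an odd number of bichromatic boundary edges in each of $E_{12}$, $E_{23}$, $E_{13}$ (any one of the three suffices).
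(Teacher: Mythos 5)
Your proof is correct. It rests on the same underlying parity phenomenon as the paper's argument --- both are Sperner-type mod-$2$ counts --- but the execution is genuinely different and self-contained. The paper caps off the rhombus with the two triangles $[uvx]$ and $[vwx]$ to form a closed $2$-manifold $M$, and then invokes Musin's extension of Sperner's lemma to closed $2$-manifolds (the number of rainbow triangles in $M$ is even); since $[vwx]$ is rainbow and $[uvx]$ is not, $S$ itself must contain a rainbow face, a contradiction. You instead work directly with the surface-with-boundary and double-count incidences between faces and edges of a fixed bichromatic class $E_{ij}$: the no-rainbow hypothesis makes every face contribute $0$ or $2$, forcing $|E_{ij}\cap\partial S|$ to be even, while the prescribed boundary colors $(1,1,2,3)$ give exactly one such edge. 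What your route buys is that it avoids the external citation and the auxiliary step of choosing the right diagonal to triangulate the rhombus (and the slight awkwardness that the capping triangles are not unit triangles, harmless since the lemma is combinatorial); what the paper's route buys is brevity and a reusable black box. Both arguments use the same implicit structural assumptions on $S$ (each face is a unit equilateral triangle, so adjacent vertices are distinct and the coloring pulls back; each edge lies in one or two faces with the boundary edges being exactly those in one face), and you were right to flag that non-embeddedness of the realization causes no trouble.
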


\begin{proof}
By contradiction, suppose~$S$ is a $2$-dimensional triangulated surface
with the boundary $\partial S=\rho$.  Consider a closed $2$-manifold
\ts $M :=  S\ts \cup \ts [u v x]\ts \cup\ts [v w x]$.
By \emph{Sperner's Lemma} for closed $2$-manifolds,
see~\cite[Cor.~3.1]{Mus}, the number of rainbow triangles in~$M$
is even.  Note that triangle \ts $[vwx]$ \ts is rainbow,
while triangle \ts $[uvx]$ \ts is not.
Thus~$S$ has at least one rainbow triangle,
a contradiction.
\end{proof}

The idea to use the coloring to prove that some curves cannot
be domed can be illustrated with the following conjecture:

\begin{conj} \label{conj:color}
Let \ts $\rho_\lozenge=[uvwx] \ssu \rr^3$ be a
unit rhombus defined above.   Then there is a coloring
as in Proposition~\ref{p:color}.
\end{conj}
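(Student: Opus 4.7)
The plan is to construct the coloring $\chi:\cR^3\to\{1,2,3\}$ explicitly. Place $\rho_\lozenge=[uvwx]$ in the $xy$-plane with
\[
u=\bigl(0,-\tfrac{\sqrt{15}}{4},0\bigr),\ \ v=\bigl(\tfrac14,0,0\bigr),\ \ w=\bigl(0,\tfrac{\sqrt{15}}{4},0\bigr),\ \ x=\bigl(-\tfrac14,0,0\bigr),
\]
so that $|uv|=|vw|=|wx|=|xu|=1$, $|vx|=\tfrac12$, and $|uw|=\tfrac{\sqrt{15}}{2}$. As a first sanity check, for each edge $ab$ of $\rho_\lozenge$ with $\chi(a)\ne\chi(b)$ the witness circle $C_{ab}:=\{p\in\rr^3:|pa|=|pb|=1\}$ of radius $\sqrt{3}/2$ must avoid the third color. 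One verifies directly that $C_{vw}\cap C_{wx}\cap C_{ux}=\emptyset$: the two points of $C_{vw}\cap C_{wx}$ are $\bigl(0,\tfrac{7}{4\sqrt{15}},\pm\sqrt{11/15}\bigr)$, and their distance to $u$ is $\sqrt{11}/2\ne 1$. Hence no point of $\rr^3$ is locally forbidden all three colors.

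I would next look for a coloring of the form $\chi(p)=\sigma(p_x,p_y)$, depending only on the projection to the plane of $\rho_\lozenge$. Since any unit triangle in $\rr^3$ projects to a triple of diameter $\le 1$ in $\rr^2$, it suffices to construct $\sigma:\rr^2\to\{1,2,3\}$ realizing the prescribed values on $\{u,v,w,x\}$ and having no $3$-colored triple inside any disk of diameter $1$. The two $3$-colored subsets of the boundary are $\{u,w,x\}$, whose diameter is $|uw|=\tfrac{\sqrt{15}}{2}>1$, and $\{v,w,x\}$, whose smallest enclosing disk has diameter $4/\sqrt{15}>1$ (since this is an acute isosceles triangle with sides $1,1,\tfrac12$ and circumradius $2/\sqrt{15}$). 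Thus neither $3$-colored triple of boundary vertices fits in a diameter-$1$ disk, so the boundary conditions are locally compatible with the existence of $\sigma$.

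The main task, and the principal obstacle, is extending $\sigma$ to all of $\rr^2$. A simple strip partition of the plane (color $1$ in a central strip, colors $2,3$ in the two outer half-planes perpendicular to a unit vector $n$) fails: the constraint $|u\cdot n|\le L$ for $u$ to land in the color-$1$ strip contradicts the constraint $w\cdot n>L$ for $w$ to land in the color-$2$ half-plane, because $u_y=-w_y$. More flexible templates are needed -- for example, let $A_2:=\sigma^{-1}(2)$ and $A_3:=\sigma^{-1}(3)$ be thin strips through $w$ and $x$ along nonparallel directions, with $A_1:=\rr^2\setminus(A_2\cup A_3)$. The no-rainbow condition then translates to the geometric requirement that every disk of diameter $1$ meeting both $A_2$ and $A_3$ is entirely contained in $A_2\cup A_3$; tuning the strip widths and directions to satisfy this while keeping $u,v\in A_1$ and $w\in A_2$, $x\in A_3$ is the crux of the construction.

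If a direct construction proves too intricate, the fallback is a compactness argument: by a De~Bruijn--Erd\H{o}s style reduction applied to the triangle hypergraph of $\cR^3$, it suffices to $3$-color every finite subset $V\subset\rr^3$ containing $\{u,v,w,x\}$ with the prescribed boundary values and no rainbow unit triangle on $V$, then glue via compactness in $\{1,2,3\}^{\rr^3}$. This finite combinatorial problem may be tractable by induction on $|V|$, exploiting the fact that the short diagonal $\tfrac12$ is less than $1$ and hence imposes no unit-distance edge among the four rhombus vertices beyond the four sides of $\rho_\lozenge$. The hardest obstacle throughout is ruling out arithmetic coincidences among the various unit spheres and circles, which might unexpectedly force two distant points of $\rr^3$ into the same color against the global constraints.
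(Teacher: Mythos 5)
This statement is Conjecture~\ref{conj:color}: the paper offers no proof of it, and in \S\ref{ss:finrem-color} the authors explicitly caution that it ``is quite possible'' the conjecture is \emph{false} (while Conjecture~\ref{conj:12-rho} is true), since demanding a global no-rainbow coloring is much stronger than merely showing $\rho_\lozenge$ cannot be domed. So there is no proof in the paper to compare against, and your text is not a proof either: by your own account the central step --- actually exhibiting the coloring --- is left as ``the crux of the construction,'' with a fallback that is itself only a reduction. What you have is a correct but very weak necessary-condition check (the three witness circles $C_{vw},C_{wx},C_{xu}$ have empty common intersection; your coordinates and the value $\sqrt{11}/2$ are right) plus a proposed strategy.

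Beyond incompleteness, the reduction to a planar coloring is misquantified in a way that matters. A unit triangle projects to a planar triple of \emph{diameter} at most $1$, but such a triple need not lie in a disk of diameter $1$; by Jung's theorem one only gets a disk of diameter $2/\sqrt{3}$. So ``no rainbow triple inside a disk of diameter $1$'' does not rule out rainbow unit triangles upstairs. Worse, the condition you actually need is not about diameters at all but about which planar triples \emph{lift} to unit triangles: a triple with pairwise distances $d_1,d_2,d_3\le 1$ is a projection of a unit triangle iff $\pm\sqrt{1-d_1^2}\pm\sqrt{1-d_2^2}\pm\sqrt{1-d_3^2}=0$ for some choice of signs. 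Your ``boundary compatibility'' check via smallest enclosing disks is therefore testing the wrong predicate --- note that $\{v,w,x\}$ is already a rainbow triple with pairwise distances $1,1,\tfrac12\le 1$ (it happens not to lift, since $0+0\ne\sqrt{3}/2$, but your disk criterion does not see this distinction in either direction). Finally, the De~Bruijn--Erd\H{o}s/compactness fallback is legitimate as a reduction to finite configurations, but it transfers, rather than removes, all of the difficulty: nothing in the proposal addresses how to $3$-color even the finite instances, and the ``induction on $|V|$'' has no mechanism behind it. As it stands the conjecture remains open.
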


\smallskip

\subsection{Domes over multi-curves}  \label{ss:finrem-tri}
One can generalize Kenyon's Question~\ref{q:kenyon}
to a disjoint union of integral curves \ts $\Ups = \ga_1\cup \ldots \cup \ga_k$,
and ask for a dome over~$\Ups$.  A special case of this, when $\Ups$ is union
of two triangles is especially important in view of the \emph{Steinhaus problem},
see~$\S$\ref{ss:finrem-stein}.  It would be interesting if the theory of places
can be applied to the following problem:

\smallskip

\begin{conj}\label{conj:two-tri}
There are unit triangles \ts $\De_1,\De_2\ssu \rr^3$, such that \ts
$\Ups=\De_1\cup \De_2$ \ts cannot be domed.
\end{conj}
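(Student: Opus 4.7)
The plan is to extend the place-theoretic machinery of Section~\ref{s:not} to this two-boundary-component setting. Fix a dome $S$ over $\Ups = \De_1\cup \De_2$, and consider specifically the case where $\De_2$ is obtained from $\De_1$ by a translation $\tau\in\rr^3$ whose coordinates are algebraically independent over $\overline{\qqq}$. The goal is to derive a nonzero polynomial relation in the entries of $\tau$ that must hold whenever such an $S$ exists, and then pick $\tau$ outside the zero locus.

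First I would build an associated doubly periodic triangular surface $(\wt K,\wt\theta)$ by tiling: since $\De_2 = \De_1+\tau$, the translates $\{S+k\tau : k\in\zz\}$ glue along their boundary triangles into a singly periodic cylindrical surface of unit triangles with period $\tau$. To promote this to a doubly periodic surface, I would glue this cylinder to its mirror image through a suitable plane transverse to $\tau$; alternatively, attach to $S$ a second ``filler'' dome over an auxiliary rhombus produced by Lemma~\ref{l:rhombus-dense} that provides a second, genuinely independent, periodicity vector $\mu$. The outcome is a doubly periodic unit-triangle surface whose period lattice $\Lambda$ is generated by $\tau$ and $\mu$, so that the Gram matrix entries $(\tau,\tau)$, $(\tau,\mu)$, $(\mu,\mu)$ all lie in the field $\ll$ generated by the vertex coordinates.

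Next I would invoke the Main Lemma~\ref{l:finite}: for any place $\phi:\ll\to F\cup\{\infty\}$ that is finite on every squared edge length of $\wt K$ (all equal to $1$), there exists a nonzero lattice vector $\la\in\La$ with $\phi((\la,\la))$ finite. Running the Krull / maximal-ideal argument at the end of the proof of Theorem~\ref{t:not} then forces an identity of the form $1=\sum_i r_i/\prod_j(\la_{ij},\la_{ij})$ in the relevant subalgebra, which after clearing denominators yields an explicit polynomial equation $P$ in the entries of the Gram matrix of $(\tau,\mu)$. By construction the auxiliary vector $\mu$ and the dome $S$ depend on only finitely many choices, so we obtain a countable family of such polynomials $P$; choosing $\tau$ generic over $\overline{\qqq}$ (and generic relative to the parameters of every possible $\mu$) makes each $P$ nonvanishing, producing the desired contradiction.

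The hard part is the doubly periodic construction in the first step: unlike the rhombus case in Lemma~\ref{l:surface}, the two ``pleats'' where consecutive copies of $S$ meet across $\De_1=\De_2-\tau$ need not be filled by unit triangles in a combinatorially consistent way, and the second period $\mu$ must be threaded through the surface without introducing nontrivial holonomy that invalidates the Gram-matrix analysis. I expect that achieving this will require either a careful symmetrization of $S$ (mirroring and re-gluing so that the transversal translation direction becomes literal) or a new local surgery lemma analogous to the First and Second Cases in the proof of Lemma~\ref{l:finite}, extended to handle two boundary components. A second, more subtle, obstacle is ensuring the resulting polynomial $P$ is not identically zero: as in the degree argument at the end of the proof of Theorem~\ref{t:not}, one must isolate a leading term (here in the six coordinates of $\tau$ and $\mu$) that cannot be cancelled by the sum; this is where Remark~\ref{r:strong} warns the method is delicate, and where the precise choice of $\mu$ will matter most.
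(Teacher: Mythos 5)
This statement is Conjecture~\ref{conj:two-tri}: the paper does not prove it, and explicitly poses it as an open problem, remarking only that ``it would be interesting if the theory of places can be applied'' to it. So there is no proof in the paper to compare against, and your proposal must be judged as a standalone argument. As written it is a research plan, not a proof: you yourself flag the two decisive steps as unresolved, and both are genuine gaps rather than routine details.

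Concretely: (1) Translating $S$ by $k\tau$ only yields a \emph{singly} periodic surface, and neither of your suggestions for a second period works as stated --- a mirror image is not a translate and cannot serve as a lattice generator, and attaching a dome over an auxiliary rhombus to one copy of $S$ does not make the infinite union invariant under a second translation $\mu$ unless you also glue in infinitely many further copies in the $\mu$-direction in a combinatorially closed way, which is exactly the nontrivial content of Lemma~\ref{l:surface} in the rhombus case and has no obvious analogue for two disjoint triangles. (2) Even granting a doubly periodic surface, the Main Lemma~\ref{l:finite} is proved only for surfaces arising from the construction of Lemma~\ref{l:surface}; its First Case surgery relies on the connectivity property, i.e.\ on $K/\Lambda\cong T\#D$ with the two periods commuting as a torus handle. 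Remark~\ref{r:non-ex} and the Appendix (Theorem~\ref{t:G-not}) show this fails for general doubly periodic surfaces, so you would have to verify this structure for whatever surface your gluing produces --- this is precisely the ``holonomy'' issue you mention but do not resolve. (3) Finally, the nonvanishing of $P$ in the proof of Theorem~\ref{t:not} hinges on the orthogonality of the period vectors ($g_{12}=0$), which makes each factor $k_j^2 s^2+m_j^2 t^2$ contribute full degree; for a generic pair $(\tau,\mu)$ there is no such cancellation-free leading term, and Remark~\ref{r:strong} shows the method can genuinely fail when the Gram entries satisfy accidental relations. Until these three points are supplied, the conjecture remains open.
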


\smallskip

In the spirit of Proposition~\ref{p:221-rhombus} and the proof of Theorem~\ref{t:dense},
we conjecture that for every integral curve~$\ga\ssu \rr^3$, whether it can be domed
can be reduced to a single rhombus.  This is the analogue of ``cobordism for domes''.
Formally, in the notation above, we believe the following holds:

\smallskip

\begin{conj}\label{conj:finrem-cobo}
For every integral curve \ts $\ga \in \cM_n$, there is a unit rhombus \ts
$\rho \in \cM_4$, and a dome over \ts $\ga \cup \rho$.
\end{conj}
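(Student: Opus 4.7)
The natural strategy is to reduce the conjecture to a cobordism problem. Given an integral curve \ts $\ga\in \cM_n$, apply Theorem~\ref{t:dense} to find a dome-able curve \ts $\ga'\in \cD_n$ \ts arbitrarily close to~$\ga$ in the Fr\'echet metric, together with a dome \ts $D'$ \ts satisfying $\partial D'=\ga'$. It would suffice to construct a unit-triangulated PL-surface \ts $S\ssu \rr^3$ \ts whose boundary is \ts $\ga \sqcup \ga' \sqcup \rho$ \ts for some unit rhombus \ts $\rho$ \ts disjoint from~$\ga$ and~$\ga'$; gluing $D'$ to~$S$ along~$\ga'$ would then yield a dome with boundary $\ga \sqcup \rho$, as desired.

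To construct such~$S$, the natural approach is to follow the flip sequence used in the proof of Theorem~\ref{t:dense}, where $\ga = \ga_0 \to \ga_1 \to \ldots \to \ga_k = \ga'$ is obtained through flips mediated by unit rhombi $\rho_1, \ldots, \rho_k$. When $\rho_i \in \cD_4$, the corresponding dome attaches telescopically into the cobordism, and if \emph{every} $\rho_i$ were dome-able, this telescoping would produce a dome over~$\ga$ alone---impossible in general by Theorem~\ref{t:not}. The guiding intuition is that all algebraic obstructions can be concentrated into a single non-dome-able flip rhombus $\rho_j$, which then plays the role of~$\rho$ in the conjecture.

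The main obstacles are twofold. First, the flip rhombus $\rho_j$ shares two edges with each of its neighboring curves $\ga_{j-1}$ and $\ga_j$, so it cannot directly serve as a rhombus disjoint from~$\ga$; one would need to transport $\rho_j$ to a disjoint location via a unit-triangulated tube, itself a nontrivial Kenyon-type subproblem. Second, simply omitting $\rho_j$ from the telescoping only produces a dome over the intermediate curve $\ga_j$ rather than over $\ga \sqcup \rho_j$, so a more intricate cobordism is required---perhaps one with boundary $\ga \sqcup \rho_j \sqcup \ga'$ engineered using the doubly periodic surface techniques of Lemma~\ref{l:surface}. I expect the latter obstacle to be the deeper: it amounts to the claim that the algebraic obstructions to Kenyon's question are captured by a two-parameter family, matching $\dim \cM_4 = 2$, and proving this would likely require extending the ideal-theoretic argument from Section~\ref{s:not} to a relative setting that tracks obstructions across the entire flip sequence.
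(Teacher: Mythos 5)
The statement you are trying to prove is Conjecture~\ref{conj:finrem-cobo}: the paper offers no proof of it, and instead only proposes splitting it into Conjectures~\ref{conj:finrem-finite-rhombi} and~\ref{conj:finrem-3-rhombi}. Your proposal is likewise not a proof; it is a strategy sketch whose two self-identified ``main obstacles'' are essentially the content of the conjecture, and neither is overcome.

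More concretely, the strategy rests on a misreading of the flip machinery of Section~\ref{s:dense}. A $k$-flip $\ga\to_k\ga'$ is \emph{defined} only when the mediating rhombus $[v_{k-1}v_kv_{k+1}v_k']$ lies in $\cD_4$, so there is no such thing as a ``non-dome-able flip rhombus'' into which the algebraic obstructions could be concentrated. Indeed, if $\ga$ were actually flip-connected to some $\ga'\in\cD_n$, then telescoping the (all dome-able) flip rhombi together with a dome over $\ga'$ would already dome $\ga$ itself, contradicting Theorem~\ref{t:not} for generic rhombi --- which shows that for most $\ga$ no such flip path to a domed curve can exist. What Theorem~\ref{t:dense} actually supplies is much weaker: a curve $\ga'\in\cD_n$ that is merely Fr\'echet-close to $\ga$; the domed curves arise as perturbations and limits (e.g.\ the curves $\eta$ and $\eta^{\langle t\rangle}$ in the proof), not as curves flip-equivalent to $\ga$. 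Fr\'echet proximity carries no unit-triangulated surface between $\ga$ and $\ga'$, so the cobordism $S$ with boundary $\ga\cup\ga'\cup\rho$ that your reduction requires is produced by nothing in the paper, and constructing it (or the ``transport tube'' for the rhombus) is essentially the conjecture itself, plus the stronger relative-obstruction claim you allude to at the end. The statement remains open, and your proposal does not close it.
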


\smallskip

\nin
In the spirit of the proof Theorem~\ref{t:dense}, there is a natural way to
split Conjecture~\ref{conj:finrem-cobo} into two parts.

\begin{conj}\label{conj:finrem-finite-rhombi}
For every integral curve \ts $\ga \in \cM_n$, there is a finite set
of unit rhombi \ts $\rho_1,\ldots,\rho_k \in \cM_4$, and a dome
over \ts $\ga \cup \rho_1\cup \cdots \cup \rho_k$.
\end{conj}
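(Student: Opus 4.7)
The plan is to prove the conjecture by induction on $n$, paralleling the proof of Theorem~\ref{t:dense} but keeping the auxiliary rhombi that arise in its pentagon reduction as boundary components rather than approximating them by domable rhombi via Lemma~\ref{l:rhombus-dense}. First, reduce to the case of a generic, $3/2$-packing integral curve $\ga^* = [w_1,\ldots,w_n]$ using Lemmas~\ref{l:dense-generic} and~\ref{l:packing-32}; both reductions use only flips with domable rhombi in $\cD_4$ and introduce no new boundary components. The base case $n=3$ is trivial with $S = \ga$; the case $n=4$ follows from an auxiliary vertex $z$ with $|v_1 z|=|v_3 z|=1$ giving two helper rhombi; and the case $n=5$ is the pentagon construction detailed below.

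For the inductive step with $n \geq 6$, choose a generic $z \in \rr^3$ with $|w_1 z|=|w_4 z|=1$ (possible since $|w_1 w_4| \leq 3/2 < 2$), splitting $\ga^*$ into the pentagon $\eta = [w_1 w_2 w_3 w_4 z]$ and the shorter curve $\phi = [w_1 z w_4 w_5 \ldots w_n]$ of length $n-1$. Apply the inductive hypothesis to $\phi$ to obtain rhombi $\rho'_1,\ldots,\rho'_m$ and a surface $S_\phi$ with $\partial S_\phi = \phi \cup \rho'_1 \cup \ldots \cup \rho'_m$. For the pentagon $\eta$, use the fact that $|w_1 w_3|,|w_3 w_4|,|w_1 w_4| \leq 3/2 < \sqrt{3}$ to find $z' \in \rr^3$ with $|w_1 z'|=|w_3 z'|=|w_4 z'|=1$, producing the unit triangle $T = [w_3 w_4 z']$ and two unit rhombi $\rho_1 = [w_1 w_2 w_3 z']$ and $\rho_2 = [w_1 z w_4 z']$. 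Setting $S := S_\phi \cup T$, the shared edges $w_1 z$ and $z w_4$ between $\eta$ and $\phi$ cancel, yielding $\partial S = \ga^* \cup \rho_1 \cup \rho_2 \cup \rho'_1 \cup \ldots \cup \rho'_m$ at the chain level. Finally, transfer back from $\ga^*$ to $\ga$ along the flip sequence of the initial reduction, attaching to $S$ the domes of the $\cD_4$-rhombi used in those flips; this gives the desired dome over $\ga$ together with the finite collected set of rhombi (of size $O(n)$).

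The main obstacle is the precise interpretation of the multi-curve boundary when the auxiliary rhombi $\rho_i$ share vertices (and sometimes edges) with $\ga$ and with one another, as happens unavoidably around the vertices $z, z'$ introduced at each step. Under the $\zz/2$-chain-level reading of $\partial S = \ga \cup \rho_1 \cup \ldots \cup \rho_k$, the construction above succeeds directly, since one verifies that every edge not in the claimed boundary appears in an even number of the participating $2$-faces. Under the stricter convention of a genuinely disjoint multi-curve boundary suggested by the discussion in Section~\ref{ss:finrem-tri}, one additionally needs a perturbation step that separates the helper rhombi from $\ga$ and from each other while preserving unit edge lengths; I would expect this to be achievable by choosing $z,z'$ generically at each step and exploiting the flexibility afforded by Lemma~\ref{l:rhombus} (varying the multipliers in the upstream flip reductions to accommodate the perturbed positions), but this perturbation-compatibility argument is where I expect the bulk of the technical work to lie.
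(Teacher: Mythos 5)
First, note that the statement you are proving is stated in the paper as an open problem (Conjecture~\ref{conj:finrem-finite-rhombi}); the paper offers no proof, so your proposal must stand on its own. It does not: the load-bearing step is the ``$\zz/2$-chain-level reading'' of the boundary, and that reading is not what the conjecture asserts. A dome over \ $\ga\cup\rho_1\cup\cdots\cup\rho_k$ \ must be a PL-surface whose boundary circles are $\ga,\rho_1,\ldots,\rho_k$; this is forced by the role the conjecture plays in the paper (reducing Conjecture~\ref{conj:finrem-cobo} to Conjecture~\ref{conj:finrem-3-rhombi} requires gluing genuine domes along genuine boundary rhombi, and the multi-curve setup of $\S$\ref{ss:finrem-tri} takes disjoint unions). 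Your base case $n=5$ makes the failure explicit: the surface you produce is the single triangle $T=[w_3w_4z']$, whose boundary \emph{as a surface} is a $3$-cycle. The identity $\partial T=\eta+\rho_1+\rho_2$ holds only as a mod-$2$ $1$-chain because the pentagon and the two rhombi overlap along edges; no surface with boundary components $\eta\sqcup\rho_1\sqcup\rho_2$ has been constructed. Under the chain-level reading the conjecture is close to vacuous and loses all of its intended content, so the reading cannot be the intended one.

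The ``perturbation step'' you defer to the end is therefore not a technicality but the entire problem. To separate $\eta$ from $\rho_1$ and $\rho_2$ you must insert actual unit-triangle surface material between curves that currently share edges ($w_1w_2$, $w_2w_3$, $zw_1$, etc.), and doing so while keeping all edge lengths equal to $1$ is exactly a doming problem of the kind the conjecture is about; nothing in Lemma~\ref{l:rhombus} or the flip machinery supplies such material. Two further points compound the gap. (i) Your first reduction, to generic curves, invokes Lemma~\ref{l:dense-generic}, which is a density statement obtained by \emph{perturbing} $\ga$, not a flip equivalence; since the conjecture concerns a fixed curve $\ga\in\cM_n$ and not a nearby one, this reduction is unavailable, and without genericity the flips of Lemma~\ref{l:packing-32} (which need $a\notin\overline\qqq$ in Lemma~\ref{l:rhombus}) cannot be performed. (ii) Even granting flips, transferring a dome from $\ga'$ back to $\ga$ requires attaching an actual dome over each flip rhombus; declaring such a rhombus to be one of the output $\rho_i$ instead again only yields a chain identity, for the same reason as above. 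In short, the proposal correctly mirrors the pentagon decomposition from the proof of Theorem~\ref{t:dense}, but the substitute for that proof's approximation step --- keeping the helper rhombi as boundary components --- is precisely the open content of the conjecture and is not supplied.
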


Conjecture~\ref{conj:finrem-finite-rhombi} is of independent
interest.  If true, it reduces Conjecture~\ref{conj:finrem-cobo}
to the following claim:

\smallskip

\begin{conj}\label{conj:finrem-3-rhombi}
For every two unit rhombi \ts $\rho_1, \rho_2\in \cM_4$, there is a unit rhombus
\ts $\rho_3\in \cM_4$ \ts and a dome over \ts $\rho_1\cup \rho_2\cup\rho_3$.
\end{conj}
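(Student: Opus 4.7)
The plan is a two-stage reduction. First, use the rhombus-stacking construction of Lemma~\ref{l:rhombus} to transform $\rho_1$ and $\rho_2$ into rhombi sharing a common diagonal; second, build an explicit ``bridge'' surface linking them, with a third rhombus as its remaining boundary.

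For the first stage, given $\rho_1 = \rho(a_1, b_1)$ and $\rho_2 = \rho(a_2, b_2)$, the doubling construction applied along the $a_i$-diagonal of $\rho_i$ produces, for each $i=1,2$, an auxiliary rhombus $\sigma_i = \rho(a_i, c_i)$ together with a $4$-triangle dome $D_i$ whose boundary is $\rho_i \cup \sigma_i$ (with the two rhombi sharing the $a_i$-diagonal as an internal edge). By Lemma~\ref{l:rhombus}, the multiplier gives dense freedom over $c_i \in \bigl[0, \sqrt{4-a_i^2}\bigr]$, after perturbing $a_i$ slightly if needed via Lemma~\ref{l:rhombus-dense}. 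In particular, one can arrange $c_1 = c_2 = c$ for a common value~$c$.

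For the second stage, the problem reduces to doming $\sigma_1 = \rho(a_1, c)$ and $\sigma_2 = \rho(a_2, c)$ together with a third unit rhombus. I would position $\sigma_1$ and $\sigma_2$ in $\rr^3$ so that their $c$-diagonals coincide as a common segment $[v, x]$. Then $\sigma_1 \cup \sigma_2$ forms a ``bowtie'' with shared vertices $v, x$ and four outer vertices, each at unit distance from both $v$ and $x$. The goal is to exhibit a unit-triangulated patch $B$ attached along these outer vertices whose remaining boundary is a single rhombus $\rho_3$. Once this is done, gluing $D_1$, $D_2$, and $B$ along $\sigma_1$ and $\sigma_2$ yields a dome over $\rho_1 \cup \rho_2 \cup \rho_3$.

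The main obstacle is constructing the bridge $B$: it is not clear a priori that the bowtie can be closed off by exactly one additional rhombus, since the four outer vertices have no reason to admit such a closure for generic parameters. One possible remedy is to iterate the first stage on each side, using additional stackings to enrich the local geometry of the bowtie until its outer boundary matches a single rhombus; however, each iteration introduces yet another auxiliary rhombus, and absorbing them would itself require another instance of the present conjecture. A fallback strategy would be to establish the weaker Conjecture~\ref{conj:finrem-finite-rhombi} first --- allowing arbitrarily many auxiliary rhombi --- and then prove a separate ``merging lemma'' reducing the number of boundary rhombi from $k \ge 3$ down to exactly $3$ via local cobordism-type moves. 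Under either approach, some genuinely new geometric input beyond the density machinery of Section~\ref{s:dense} seems necessary, since here one needs an exact existence statement rather than a limit.
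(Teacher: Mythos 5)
The statement you are trying to prove is Conjecture~\ref{conj:finrem-3-rhombi}: it is posed as an open problem in the paper and no proof is given there, so there is nothing to compare your argument against --- and indeed your proposal is not a proof but a sketch with two genuine gaps, one of which you acknowledge and one of which you seem to have missed.

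The gap you miss is already in stage one, at the step ``one can arrange $c_1=c_2=c$ for a common value~$c$.'' Lemma~\ref{l:rhombus} only gives that the achievable second diagonals form a \emph{countable dense} subset of \ts $\bigl[0,\sqrt{4-a_i^2}\bigr]$: in the notation of its proof, the flipped vertex moves along the orbit of an irrational rotation about the $a_i$-axis, so the attainable values of $c_i$ are of the form \ts chord$\bigl(\theta_0^{(i)}+2k\al_i\bigr)$, \ts $k\in\zz$, where $\theta_0^{(i)}$ and $\al_i$ are determined by $b_i$ and $a_i$. Two such countable sets, built from independent parameters $(a_1,b_1)$ and $(a_2,b_2)$, have no reason to intersect (compare \ts $\{q\sqrt2:q\in\qqq\}$ \ts and \ts $\{q\sqrt3:q\in\qqq\}$), so exact matching $c_1=c_2$ fails for generic inputs. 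This is the same ``density gives limits, not exact existence'' obstruction you correctly identify at the end, but it kills the construction one stage earlier than you think. The second gap is the bridge surface~$B$, which you concede is entirely missing; and your fallback route is circular, since absorbing the extra auxiliary rhombi produced by iteration would require the very conjecture you are trying to prove (reducing instead to Conjecture~\ref{conj:finrem-finite-rhombi} plus a merging lemma is a reasonable research plan, but the merging lemma is again essentially the open statement). In short: the reduction framework is sensible and consistent with the paper's ``cobordism for domes'' philosophy in $\S$\ref{ss:finrem-tri}, but no step of it is actually established, and the statement remains a conjecture.
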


\smallskip

\subsection{General algebraic dependence}  \label{ss:big-gen-ad}
While much of the paper and earlier conjectures are largely concerned
with reducing the problem to domes over rhombi, there is another direction
one can explore.  Namely, one can ask if Theorem~\ref{t:not} can be
generalized to all integral curves.

\smallskip

Let \ts $\ga=[v_1\ldots v_n] \in \cM_n$ \ts be an integral curve.
Denote by \ts $\ll_n = \qqq[x_{1,3},x_{1,4},\ldots,x_{n-2, n}]$ the ring
of rational polynomials with variables corresponding to diagonals of~$\ga$.
Let $\CM_n\ssu \ll_n$ be the ideal spanned by all
\emph{Cayley--Menger determinants} on vertices \ts $\{v_1,\ldots,v_n\}$,
see~\cite{CSW} and~\cite[$\S$41.6]{Pak}.  We can now formulate
the conjecture.

\begin{conj}\label{c:not-gen}
Let \ts $\ga=[v_1\ldots v_n] \in \cD_n$ \ts be an integral curve which
can be domed, where \ts $n\ge 5$.  Denote by $d_{ij}=|v_iv_j|$ the
diagonals of~$\ga$, where $1\le i < j \le n$. Then there is a nonzero
polynomial \ts $P\in \ll_n$, which does not belong to the radical of \ts {\rm $\CM_n$} and
such that \ts $P\bigl(d_{1,3}^2,d_{1,4}^2,\ldots,d_{n-2, n}^2\bigr)=0$.
\end{conj}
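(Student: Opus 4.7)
The plan is to generalize the theory-of-places argument of Theorem~\ref{t:not} directly to the dome $S$ over $\ga = [v_1 \ldots v_n]$, rather than passing through a doubly periodic surface (whose construction in Lemma~\ref{l:surface} was specific to $n = 4$). Let $\ll$ be the field generated over $\qqq$ by the coordinates of the vertices of~$S$, let $R \ssu \ll$ be the $\qqq$-subalgebra generated by the squared edge lengths (all equal to~$1$, so $R = \qqq$), and form the localization $R' = R\bigl[d_{ij}^{-2}\bigr]$ together with the ideal $I' = \left(d_{ij}^{-2} : 1 \le i < j \le n\right) \vartriangleleft R'$. If $I' = R'$, then writing $1 \in I'$ and clearing denominators yields a nonzero polynomial $P \in \ll_n$ with $P\bigl(d_{1,3}^2, \ldots, d_{n-2,n}^2\bigr) = 0$, exactly as in the final paragraphs of the proof of Theorem~\ref{t:not}.

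The key step is an analog of the Main Lemma~\ref{l:finite}: every place $\phi : \ll \to F \cup \{\infty\}$ with $\mathrm{char}\, F = 0$ that is finite on all squared edge lengths of $S$ is also finite on at least one squared diagonal $d_{ij}^2$ of~$\ga$. Given this, the Krull-and-place argument of~$\S$\ref{ss:not-hard} runs verbatim: from a maximal ideal $I \sss I'$ and Lemma~\ref{l:lang} one produces a place sending every $d_{ij}^{-2}$ to~$0$, hence every $d_{ij}^2$ to~$\infty$, contradicting the Main Lemma analog. To prove the latter, I would mimic the complexity induction of~$\S$\ref{ss:not-hard}: at an interior vertex $u$ of smallest degree $d \ge 4$, Theorem~\ref{l:csw} yields finiteness on some link-diagonal $\bl_{v_iv_{i+2}}$; one then performs either the edge-flip surgery (replacing $uv_{i+1}$ by $v_iv_{i+2}$, strictly lowering the minimal vertex degree) or, in the first case, the triangle-surgery (strictly lowering an appropriate Euler-type complexity of the dome). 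Because $\partial S = \ga$ is preserved throughout, the induction must terminate in a reduced dome whose interior triangulation has collapsed enough that at least one diagonal of~$\ga$ appears as a literal edge, with squared length~$1$ and therefore trivially finite.

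The main obstacles are two-fold. \emph{First}, the surgery induction must be adapted to respect the boundary: unlike the doubly periodic setting, where the lattice $\La$ is preserved and eventually some lattice vector becomes an edge, here we need a boundary-aware complexity measure (e.g.,~counting interior vertices plus non-diagonal edges) to ensure that termination exposes a diagonal of~$\ga$ rather than degenerating to a trivial dome. \emph{Second}, and more delicately, one must show $P \notin \CM_n$: equivalently, the set of domed $n$-gons must not be Zariski-dense in the slice of the Cayley--Menger variety cut out by unit edges. The natural approach is to analyze the top-degree term of~$P$, which by construction is a product $\prod_j d_{i_jj_j}^2$ with a strictly positive integer coefficient, while all other monomials of~$P$ have strictly smaller total degree. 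One then argues, via Gr\"obner bases for $\CM_n$ or by exhibiting an explicit non-domable configuration where $P$ is nonzero, that such a squarefree leading monomial cannot lie in the leading ideal of~$\CM_n$. Controlling this last step is the central difficulty, and is likely where the proof will differ most sharply from the $n = 4$ case.
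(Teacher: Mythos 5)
This statement is Conjecture~\ref{c:not-gen}: the paper offers no proof of it, and the authors explicitly leave it open, remarking only that ``it would be interesting to see if this result can be obtained by expending our argument in Section~\ref{s:not}.'' Your proposal is essentially that suggested research program, not a proof, and you yourself flag the two places where it is incomplete. Both gaps are genuine and neither is closed by what you write.

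First, the analog of the Main Lemma~\ref{l:finite} for a surface with boundary does not follow by ``running the induction verbatim.'' In the rhombus case the passage to a doubly periodic surface is not a convenience but the engine of the proof: the least complex surfaces are exactly those with an edge realizing a lattice vector, the CSW step (Theorem~\ref{l:csw}) always applies because every vertex of the quotient has a full cyclic link, and the First Case surgery is controlled by the connectivity property, which itself needs the torus summand (see Remark~\ref{r:non-ex}). For a dome over $\ga$ the boundary vertices have links that are paths rather than cycles, so Theorem~\ref{l:csw} does not apply to them as stated; the First Case surgery can cut through the boundary or disconnect it from the rest of the surface; and your termination claim --- that the reduction must end with ``at least one diagonal of $\ga$ appearing as a literal edge, with squared length~$1$'' --- conflates two different things. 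The correct target is that two boundary vertices $v_i,v_j$ become joined by an edge of the \emph{current} surface, whose squared length is $d_{ij}^2$ and on which the place is finite because finiteness on all current edge lengths is maintained inductively; this edge is not a unit edge, and nothing in your sketch forces the reduction to produce such a boundary-to-boundary edge rather than collapsing entirely in the interior. A boundary-aware complexity measure that guarantees this is precisely the missing idea.

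Second, the condition $P\notin\CM_n$ is not addressed by the degree argument that works for $n=4$. For a rhombus there are no Cayley--Menger obstructions (four points in $\rr^3$ impose no relation on the two diagonals beyond $a^2+b^2\le 4$), whereas for $n\ge 5$ the squared diagonals of \emph{any} integral $n$-gon in $\rr^3$ already satisfy the nontrivial relations in $\CM_n$, so producing a nonzero vanishing polynomial is easy and the entire content of the conjecture is the escape from $\CM_n$. Your suggestion to inspect the leading monomial $\prod_j d_{i_jj_j}^2$ and argue it cannot lie in the leading ideal of $\CM_n$ is unsupported (and the monomial need not be squarefree, since generators of $I'$ can repeat); ``exhibiting an explicit non-domable configuration where $P$ is nonzero'' presupposes control over which configurations are non-domable, which is close to circular. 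In short: this is a plausible plan consistent with the authors' own suggestion, but it is not a proof, and the two steps you defer are exactly the ones that make the statement a conjecture rather than a theorem.
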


This conjecture can be viewed as a direct analogue of Sabitov's
theory of volume being algebraic over squared diagonal lengths,
see~$\S$\ref{ss:finrem-sab}.  It would be interesting to
see if this result can be obtained by expanding our argument in
Section~\ref{s:not}.  Perhaps, Conjecture~\ref{conj:finrem-cobo}  could
be used to deduce Conjecture~\ref{c:not-gen} from Theorem~\ref{t:not}.

\bigskip

\section{Final remarks}\label{s:finrem}

\subsection{}  \label{ss:finrem-dome}
Our choice of terminology ``dome over curve $\ga$'' owes much to
the architectural style of the iconic \emph{geodesic domes} popularized
by Buckminster Fuller, and his ill-fated 1960 proposal of a
\emph{Dome over Manhattan}, see e.g.\ \cite[pp.~321--324]{Boo}.

\subsection{}  \label{ss:finrem-Ken}
Kenyon formulated Question~\ref{q:kenyon} in~\cite[Problem~2]{Ken},
in an undated webpage going back to at least April~2005. It is best understood
in the context of \emph{regular polygonal surfaces} (see e.g.~\cite{Ale,Cox}).
While we study only the weaker notion (realizations), both the
\emph{immersed} and the \emph{embedded} surfaces can be considered,
as they add further constraints to the domes.  Note also that combinatorially,
a dome is a \emph{unit distance complex} of dimension two~\cite{Kal},
a notion generalizing the unit distance graphs in~$\S$\ref{ss:big-color}.

\subsection{}\label{ss:finrem-rig}
One can view the proof of Theorem~\ref{t:not} as a rigidity
result for $2$-surfaces with unit triangular faces and a single
boundary.  There are several related rigidity results for
surfaces with square and regular pentagonal faces,
see e.g.~\cite{Ale,DSS}.

\subsection{}\label{ss:finrem-color}
It is quite possible that Conjecture~\ref{conj:color} is false
while Conjecture~\ref{conj:12-rho} is true, since the former seems
much stronger.  This conjecture is partly motivated by our
early attempts to use Monsky's valuation approach~\cite{Mon1,Mon2},
to obtain a negative answer to Kenyon's Question~\ref{q:kenyon}.

\subsection{}\label{ss:finrem-stein}
The Steinitz Lemma mentioned in~$\S$\ref{ss:dense-packing} is a special
case of the remarkable 1913 result by Steinitz, motivated by Riemann's study of
conditionally convergent series of real numbers. Bergstr\"om's lower bound
\ts $B_2\ge \sqrt{5/4}$ \ts comes from taking unit edge vectors in the
$(k,k,1)$ triangle, while the matching upper bound is based on
elementary arguments in plane geometry.  For general~$d$, the best known
bound \ts $B_d\le d$ is due to Grinberg and Sevast'janov~\cite{GS}.
B\'ar\'any and others conjecture that \ts
$B_d=O(\sqrt{d})$, which would match the Bergstr\"om--type lower bound \ts
$B_d \ge \sqrt{(d+3)/4}$.  We refer to an interesting
survey~\cite{Bar} for these results and further references.

\subsection{}\label{ss:finrem-sab}
Building on his earlier work and on~\cite{CSW}, Sabitov in~\cite{Sab1}
and~\cite[$\S$14]{Sab2}, proved that a small
diagonal in a closed orientable simplicial polyhedron (of any genus),
depends algebraically on the lengths of edges of the polyhedron and this
dependence is generically non-trivial. Following the proof of
Theorem~\ref{t:not}, we can extend this result to non-orientable
polyhedra.

\subsection{}\label{ss:finrem-tetra}
While Theorem~\ref{t:dense} is technical, it is natural in view of
the existing recreational literature.  Notably, in the \emph{Scottish book},
Steinhaus introduced the \emph{tetrahedral chains}, which are polyhedra
with a chain-like partition into regular unit tetrahedra. They
can be viewed as special types of domes over two triangles,
see~$\S$\ref{ss:finrem-tri}. Steinhaus's 1957 problem asks if
tetrahedral chains can be closed, and if they are dense in~$\rr^3$.
While the former was given a negative answer in 1959 by \'Swierczkowski,
the latter was partially resolved only recently by Elgersma and Wagon~\cite{EW}.
A somewhat stronger version was later proved by Stewart~\cite{Ste}.

Stewart's paper is especially notable.  He uses the ergodic theory
of non-amenable group actions, and reproves the (previously known)
fact that as a subgroup of \ts $O(3,\rr)$,
the group~$G$ of face reflections of a regular tetrahedron
is isomorphic to a free product: \ts $G\simeq\zz_2\ast \zz_2 \ast \zz_2 \ast \zz_2$.
From there, Stewart showed that~$G$ is dense in \ts $O(3,\rr)$.  One can
view this result as an advanced generalization of our Lemma~\ref{l:rhombus-dense}.
The original Steinhaus problem about the group of reflections being dense
in the full group \ts $O(3,\rr)\ltimes \rr^3$ \ts of rigid motions remains open.

\medskip

\subsection*{Acknowledgements}
We are grateful to Arseniy Akopyan, Nikolay Dolbilin, Alexander Gaifullin, Oleg Karpenkov, Oleg Musin, Bruce Rothschild and Ian Stewart, for interesting comments and help with the references.  We are
especially thankful to Rick Kenyon for encouragement, and to
anonymous referee for careful reading of the paper.
The authors were partially supported by the NSF.

\vskip1.1cm


\newpage

\appendix

\section{Doubly periodic surface with a three-dimensional flex}\label{s:app}

\subsection{Flex dimension}
In~\cite{GG}, Gaifullin and Gaifullin studied the case of doubly
periodic surfaces homeomorphic to the plane. In this case they proved a stronger
result than Theorem~\ref{t:gai}, that there are two primitive
vectors $\lambda,\mu\in \Lap$, such that the place $\phi$ is
finite both on $(\lambda,\lambda)$ and $(\lambda,\mu)$. They
concluded the following result:

\smallskip

\begin{thm}[{\cite[Thm~1.4]{GG}}] \label{s:G-flex}
Every embedded doubly periodic triangular surface homeomorphic to a plane
has at most one-dimensional doubly periodic flex.
\end{thm}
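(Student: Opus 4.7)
My plan is to combine Theorem~\ref{t:gai}, which already constrains the set $\mathcal G(K)$ of Gram matrices to a one-dimensional real algebraic subvariety of $\rr^3$, with a fiber argument that exploits embeddedness. Parametrize a doubly periodic flex as a continuous family $\{(K,\theta_t)\}_{t\in U}$ of doubly periodic triangular surfaces with common combinatorics and common unit edge lengths, modulo rigid motions of $\rr^3$. Composition with the Gram-matrix assignment $\theta\mapsto G(\theta)\in\rr^3$ yields a continuous semi-algebraic map $U\to\mathcal G(K)$; by Theorem~\ref{t:gai} its image is contained in a one-dimensional affine real algebraic variety, so the Gram-matrix projection of the flex already has dimension at most~$1$.

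To upgrade this to $\dim U\leq 1$, I would analyze the fibers of $G$ at a generic point. Fixing the Gram matrix fixes the lattice $\Lambda$ up to rigid motion, so a fiber parametrizes isometric realizations of the fixed triangulated flat torus $K/\Lambda$ as an embedded polyhedral surface in $\rr^3$. Under the embeddedness and plane-topology hypotheses, such realizations form a zero-dimensional set by a classical rigidity argument in the Alexandrov/Cauchy tradition: embedded closed triangulated surfaces with prescribed intrinsic flat metric are (infinitesimally) rigid up to finite ambiguity. Adding the one-dimensional image dimension to the zero-dimensional fiber dimension gives $\dim U\leq 1$, as required.

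The main obstacle is justifying the zero-dimensional fiber cleanly in the periodic setting. A purely algebraic route that bypasses the fiber analysis would be to strengthen Lemma~\ref{l:finite} to the two-vector version announced in this appendix: for every place $\phi$ finite on all squared edges $\bl_{ab}$ there exist primitive $\lambda,\mu\in\Lap$ on which $\phi$ is finite on both $(\lambda,\lambda)$ and $(\lambda,\mu)$. Replaying the Krull/Lemma~\ref{l:lang} argument of Section~\ref{s:not}, but now adjoining both $(\lambda,\lambda)^{-1}$ and $(\lambda,\mu)^{-1}$ to~$R$, and clearing denominators in the resulting relation $1\in I$, produces two algebraically independent polynomial relations $p(g_{11},g_{12},g_{22})=q(g_{11},g_{12},g_{22})=0$ with integer coefficients. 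The complexity induction of Section~\ref{s:not} still runs, but the First Case surgery and the Second Case minimum-degree reduction must now propagate two independent finiteness conditions simultaneously. This is precisely where embeddedness and plane topology re-enter, through the connectivity property of Figure~\ref{f:surface}, which fails in general as Remark~\ref{r:non-ex} shows.
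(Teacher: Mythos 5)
First, be aware that the paper does not prove this statement: Theorem~\ref{s:G-flex} is imported verbatim from \cite[Thm~1.4]{GG}, and the appendix only records the key intermediate fact on which that proof rests, namely the strengthening of the Main Lemma~\ref{l:finite} to \emph{two} primitive vectors $\lambda,\mu\in\Lap$ with $\phi$ finite on both $(\lambda,\lambda)$ and $(\lambda,\mu)$. Your second route is therefore the right skeleton --- it is exactly the mechanism the paper attributes to \cite{GG} --- but as written it is a plan, not a proof. The two genuinely hard points are left untouched: (i) propagating two finiteness conditions through the complexity induction is where plane topology and embeddedness actually enter, and it is the entire content of the argument in \cite{GG}, not a routine rerun of Section~\ref{s:not}; and (ii) the Krull/clearing-denominators step produces \emph{one} polynomial identity per maximal ideal, so ``adjoining both $(\lambda,\lambda)^{-1}$ and $(\lambda,\mu)^{-1}$ and clearing denominators'' does not by itself yield two algebraically independent relations (also, $(\lambda,\mu)$ may vanish and cannot be freely inverted). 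Cutting the Gram-matrix variety down to dimension one requires showing that the transcendence degree of $\qqq(g_{11},g_{12},g_{22})$ over $\qqq$ is at most one, which uses the two-vector statement for \emph{every} place, not a second round of the same computation.

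Your first route contains a concrete error. The claim that ``embedded closed triangulated surfaces with prescribed intrinsic flat metric are (infinitesimally) rigid up to finite ambiguity'' is false: Connelly's flexible polyhedron \cite{Con} is an embedded closed triangulated sphere that flexes, and the relevant quotient $K/\Lambda$ here is a \emph{torus}, where no Cauchy--Alexandrov-type rigidity is available (convexity is essential to those arguments). So the zero-dimensional-fiber step cannot be justified this way, and with it the passage from ``the Gram-matrix image is at most one-dimensional'' to ``$\dim U\le 1$'' collapses. Note that \cite[Thm~1.4]{GG} is really a statement about the deformation of the \emph{period lattice} (consistent with how Theorem~\ref{t:G-not} measures flex dimension via the three Gram entries); under that reading Theorem~\ref{t:gai} together with the independence of the two relations already suffices and no fiber analysis is needed, whereas under your reading the fiber analysis is essential and unproven.
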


\smallskip

By a \emph{doubly periodic flex} of the triangular surface~$S$
we mean a continuous rigid deformation \ts $\{S_t, \. t\in [0,\de)\}$ \ts
for some $\de>0$, which preserves double periodicity, i.e.\ invariant under the action
of \ts $\Ga=\zz\oplus\zz$ (the action of $\Ga$ can also depend on~$t$).
The continuity of $S$ is meant with respect to all dihedral angles.
Here we identify deformations modulo changes of parameter~$t$ and
ask for the dimension of the space of flexing at $t=0$, i.e.\
when $S_0=S$.   For example, the surface in Figure~\ref{f:surface},
when triangulated along the shadow lines has only one doubly
periodic flex along these lines.

Let us mention that flexible doubly periodic surfaces is an important
phenomenon in Rigidity Theory, with \emph{Kokotsakis surfaces}
introduced in~1933, giving classical examples, see e.g.~\cite{Izm,Kar}.
Note that there are doubly periodic polyhedral surfaces whose flexes
are not doubly periodic, see~\cite{StH}. We refer to~\cite[$\S$25.5]{Sch}
for a recent short survey on rigidity of periodic frameworks, and further
references.

\medskip

\subsection{New construction}
In~\cite[Question~1.5]{GG}, the authors asked if Theorem~\ref{s:G-flex}
can be extended to surfaces which are not homeomorphic to a
plane.  In this section we give a negative answer to this
question by an explicit construction.

\smallskip

\begin{thm}\label{t:G-not}
There is a doubly periodic triangular surface whose doubly periodic flex is three-dimensional.
\end{thm}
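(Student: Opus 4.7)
The plan is to construct an explicit doubly periodic triangular surface $(K,\theta)$ whose doubly periodic flex space has dimension at least three, directly exploiting the failure of the connectivity property documented in Remark~\ref{r:non-ex}. The target surface will have a quotient $K/\La$ that decomposes, up to small rigid ``junction blocks,'' into three one-periodic sub-surfaces, each periodic along a different primitive vector of $\La$, with each sub-surface carrying its own independent one-parameter flex. This is precisely the configuration ruled out when $K/\La$ is homeomorphic to a plane (the setting of Theorem~\ref{s:G-flex}) but allowed in general topology, as the non-example of Figure~\ref{f:surface-not} already suggests.

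First, I would build a one-periodic flexible triangular strip $\Si(v;s)$, periodic along a vector $v\in\rr^3$, whose period length $|v|=|v(s)|$ varies nontrivially with a scalar parameter~$s$. A convenient candidate is a bellows-like cyclic chain of unit tetrahedra joined only along alternating edges: each shared edge contributes one dihedral-angle degree of freedom, while the cyclic closure condition (one full translation per period) cuts the flex down to a one-parameter family whose translation vector varies continuously with~$s$. This is the triangulated analog of the flexible Kokotsakis strips referenced in $\S$\ref{s:G-flex}.

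Second, I fix three primitive lattice vectors $v_1=\al$, $v_2=\be$, $v_3=\al+\be$ of a common lattice $\La=\zz\al\oplus\zz\be$, and take parallel copies of each $\Si(v_i;s_i)$ indexed by $\La/\zz v_i$. I then fuse the three infinite families of strips along small rigid junction blocks of unit triangles, placed deliberately inside the rigid portions of each strip (far from the hinges), to produce a connected doubly periodic triangular complex~$K$ with free $\La$-action. Since each $s_i$-flex only moves the junctions it touches by a rigid translation, the three flexes remain decoupled. Consequently, varying $s_1,s_2,s_3$ independently alters $|\al|^2$, $|\be|^2$, and $|\al+\be|^2$ independently; these three quantities parameterize the three entries of the Gram matrix of~$\La$, so the joint deformation has rank three, producing a three-dimensional doubly periodic flex.

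The main obstacle I expect lies in verifying that the gluing does not secretly reduce the flex dimension. Ensuring that each junction sits entirely within the rigid part of every strip it meets, and that no parallel-transport loop around the junctions imposes a linear relation among the $s_i$-flexes, requires a careful local analysis at each junction block, essentially an infinitesimal rigidity count on the fundamental domain $K/\La$. A concrete, combinatorially explicit realization---rather than an abstract existence argument---will likely be the most honest way to finish the proof, with the three independent flex directions exhibited by writing down the time derivatives of the Gram matrix entries at $s_1=s_2=s_3=0$ and checking that the three resulting tangent vectors are linearly independent.
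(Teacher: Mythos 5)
There is a genuine gap, and it is fatal to the construction as described. Your three strip families are not independent: the lattice relation forces a closing condition that couples the three parameters. Concretely, fix a junction block $J$ and work in its frame. Since the junction is rigid and each strip attaches to it through a rigid piece, the period vector of the $\alpha$-strip is a curve $\alpha(s_1)\in\rr^3$, that of the $\beta$-strip is a curve $\beta(s_2)$, and that of the $(\alpha+\beta)$-strip is a curve $\gamma(s_3)$ (each composed with a fixed attachment rotation). But the deck transformation of $K$ associated to the group element $a+b$ is the composition of those for $a$ and for $b$, so for \emph{any} doubly periodic realization one must have $\gamma(s_3)=\alpha(s_1)+\beta(s_2)$ as vectors in $\rr^3$. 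That is three scalar equations in the three unknowns $s_1,s_2,s_3$, so the configuration space is generically zero-dimensional: adding the third strip family contributes one parameter but three constraints, and your surface is generically \emph{rigid}, not $3$-flexible. The sentence ``since each $s_i$-flex only moves the junctions it touches by a rigid translation, the three flexes remain decoupled'' has it backwards --- the rigidity of the junctions is precisely what couples them. This is not fixable by careful placement of the junctions, because the constraint comes from the global relation $T_{a+b}=T_aT_b$ in the lattice, not from any local loop. Note also that even with only two strip families and rigid junctions you cannot win: the Gram matrix $\bigl(|\alpha|^2,(\alpha,\beta),|\beta|^2\bigr)$ would then be a function of $(s_1,s_2)$ alone and has rank at most two.

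The missing idea --- and the one the paper uses --- is a \emph{flexible} connector. The paper builds two one-periodic ``infinite accordions'' $S$ and $S^{\prime}$ (from Bricard octahedra capped so that each period is an accordion cell whose axis length flexes), periodic along $\alpha$ and $\beta$ respectively, and joins them by copies of a second flexible Bricard-type polyhedron $H$ whose own flex changes the \emph{angle} between the two axes. In the frame language above, flexing $H$ rotates the frame to which the $\beta$-strip attaches relative to the frame of the $\alpha$-strip, so the Gram matrix becomes $\bigl(|\alpha(s_1)|^2,\ (\alpha(s_1),\mathrm{Rot}(s_0)\beta(s_2)),\ |\beta(s_2)|^2\bigr)$, which genuinely depends on three parameters. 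If you want to salvage your plan, replace the third strip family by a flexible junction supplying the angle degree of freedom; you will also need to replace the ``cyclic chain of tetrahedra glued along edges'' (which is not a surface at the shared edges) by an actual triangulated flexible strip.
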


\smallskip

\begin{proof}
First, consider a flexible polyhedron $F$ satisfying the following conditions.
Polyhedron $F$ must have two faces $f_1$ and $f_2$, both of them centrally
symmetric, such that the distance between their centers changes
during the flexing and all other faces of $F$ are triangles.
To construct such~$F$, take, for example, one of Bricard's flexible
octahedra~$B$  (see e.g.~\cite[$\S$2.3]{Li} and~\cite[$\S$30]{Pak}),
and attach two square pyramids to either two of its triangular faces.
We illustrate this step in Figure~\ref{f:dflex-part1}, where $B$ is
replaced with the usual octahedron for clarity.
\begin{figure}[hbt]
 \begin{center}
   \includegraphics[height=2.3cm]{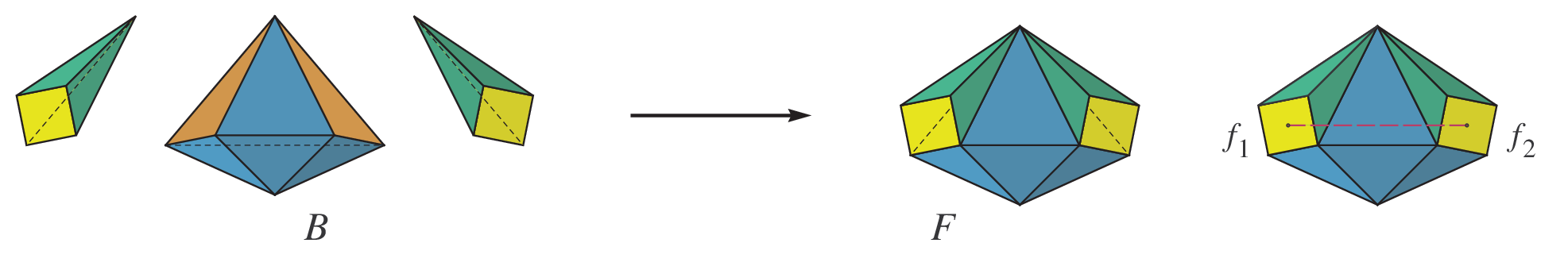}
      \vskip-.5cm
   \caption{Illustration of the first step of the construction. }
   \label{f:dflex-part1}
 \end{center}
\end{figure}

We call \emph{the axis of~$F$}, the line segment connecting the centers
of $f_1$ and~$f_2$ (see Figure~\ref{f:dflex-part1}).
Let $H$ be a flexible polyhedron that has two faces
congruent to one of the triangular faces of~$F$.
\begin{figure}[hbt]
 \begin{center}
   \includegraphics[height=3.3cm]{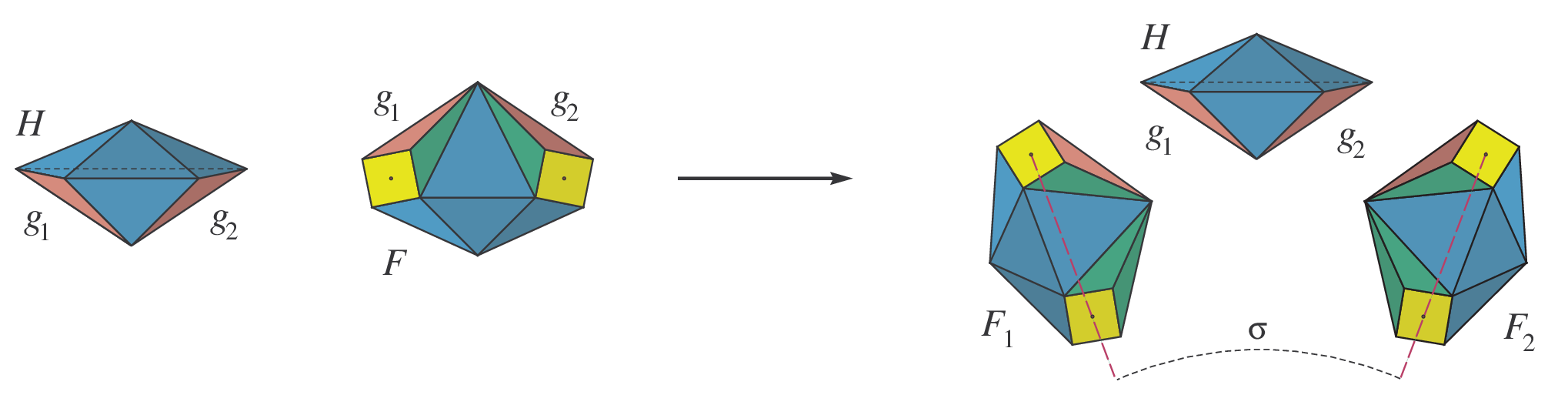}
   \vskip-.4cm
   \caption{Illustration of the second step of the construction. }
   \label{f:dflex-part2}
 \end{center}
\end{figure}
We attach two copies of $F$, which we call
$F_1$ and~$F_2$, to each of these two faces of~$H$
(see Figure~\ref{f:dflex-part2}).  For our construction
we are interested in such $H$ that, when flexing $H$, the angle~$\si$
between the axes of $F_1$ and $F_2$ changes and is never zero.
Again, a suitable
Bricard's octahedron satisfies this condition. We then have a three-dimensional
flexing of the whole structure: flexing of $F_1$ changes the length of the
axis of $F_1$, flexing of $F_2$ changes the length of the axis of~$F_2$,
flexing of $H$ changes the angle~$\si$ between the axes.

For the next step of the construction we consider $F'$, the image of $F$
under the central symmetry with respect to the center of~$f_2$.
By $\wh{F}$ we denote the union of $F$ and $F'$ attached by $f_2$
(see Figure~\ref{f:pieces}).
From now on, we consider only flexes of $\wh{F}$ such that it
stays centrally symmetric with respect to the center of~$f_2$.
Note that during the flex, face $f_1$ and its counterpart in $F'$,
face~$f_1'$, are translates of each other and the distance between
their centers changes during the flex. One can think of $\wh{F}$ as a polyhedral
version of accordion with bellows such that the sturdy parts of
the accordion always stay parallel but the distance between them
may change.

\begin{figure}[hbt]
 \begin{center}
\vskip-.4cm
   \includegraphics[height=2.4cm]{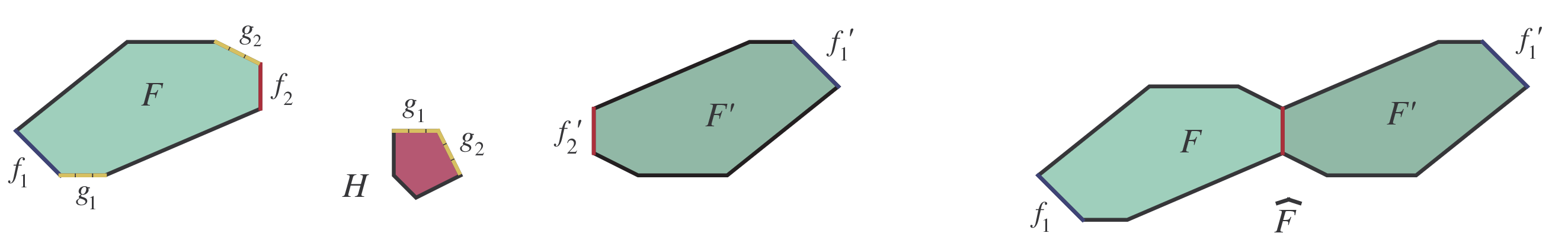}
   \vskip-.3cm
   \caption{Illustration of the pieces of the infinite accordion. }
   \label{f:pieces}
 \end{center}
\end{figure}

Using infinitely many copies of $\wh{F}$, we construct a periodic
flexible surface $S$ (\emph{infinite accordion}) by attaching $f_1$
of one copy of $\wh{F}$ to $f_1'$ of the next copy of $\wh{F}$.
See Figure~\ref{f:periodic} for an illustration (cf.\ Figure~\ref{f:surface}).
The space of periodic flexes of $S$ is one-dimensional.

Consider a flex \ts $\{S_t\}$ of $S=S_0$ with periodicity vector~$\alpha$.
At one of the flexed copies of $\wh{F}$ we attach~$H$ along $g_1$.
Then attach $H$ along $g_2$ to another copy of~$\wh{F}'$ which forms
its own copy~$S^{\sp}$ of the infinite accordion. Assume the vector of periodicity
of $S^{\sp}$ is~$\beta$. Now we attach all translates \ts $H+k\alpha$,
$k\in \zz$, to surface~$S$, and attach \ts $S^{\sp}+k\alpha$ to each of them.
Then attach \ts $H+k\alpha+m\beta$, $m\in\zz$, to all \ts $S^{\sp}+k\alpha$,
and attach all translates \ts $S+m\beta$ \ts to all translates of~$H$.
The resulting surface~$\cF$ is doubly periodic. It can be flexed in
the following two ways:

\smallskip

$\circ$ \. by
changing lengths of both $\alpha$ and $\beta$ when flexing $\{S_t\}$
and~$\{S^{\sp}_z\}$, respectively, and

\smallskip

$\circ$ \. by  changing the angle~$\si$ between the axes of $S$
and $S^{\sp}$, by simultaneously flexing all copies of~$H.$

\smallskip

\nin
Therefore, the space of doubly periodic flexes of~$\cF$ is three-dimensional.
\end{proof}

\begin{figure}[hbt]
 \begin{center}
   \includegraphics[height=5.5cm]{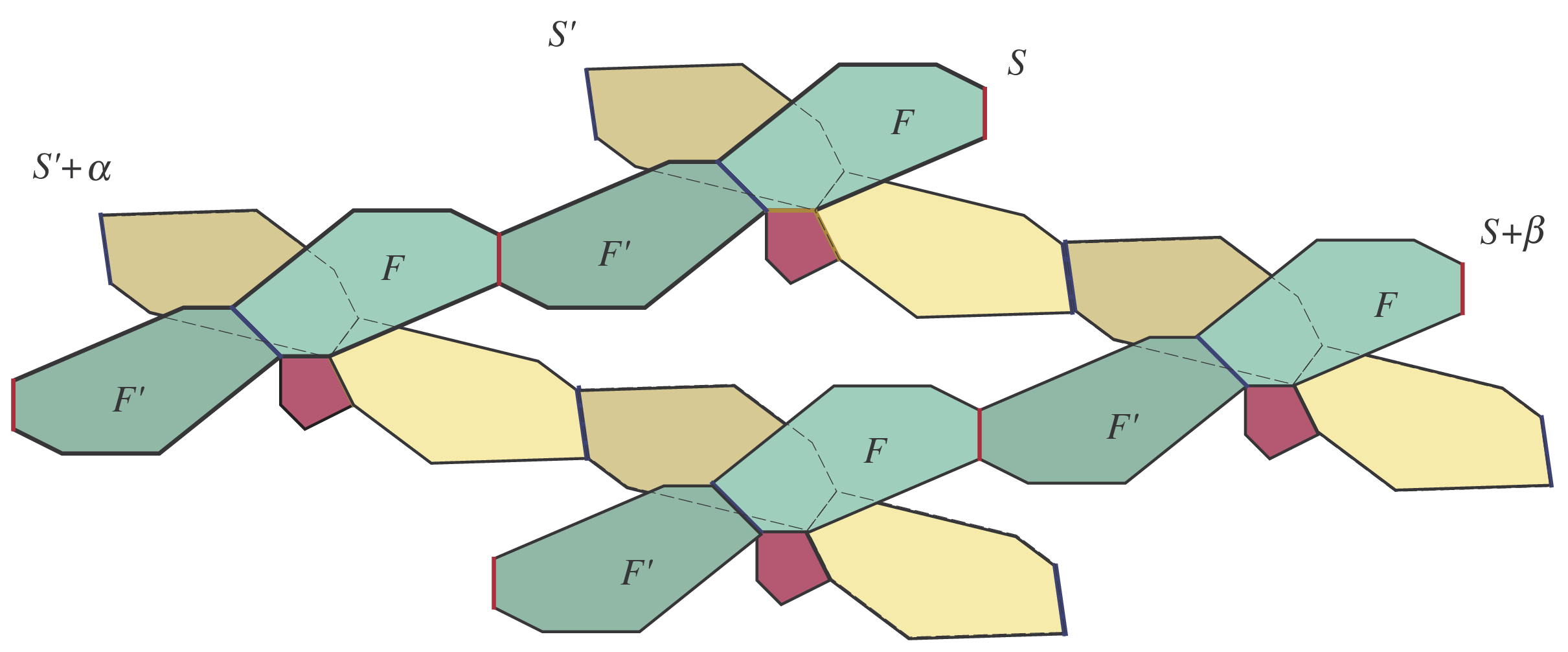}
   \caption{Illustration of the infinite accordion surface~$S$
   in the proof, and the three other copies \ts $S+\be$, $S'$ and $S'+\al$,
   of the doubly periodic surface~$\cF$. }
   \label{f:periodic}
 \end{center}
\end{figure}

\medskip

\begin{rem}{\rm
Note that the surface~$S$ in the proof is not necessarily embedded.
One can similarly construct the analogous embedded surface,
by a more careful choice of a flexible polyhedron, cf.~\cite{Con}.
It would be interesting to see if such constructions can have
engineering applications.  We refer to a recent
thesis~\cite{Li}, which reviews several new constructions of
embedded flexible polyhedra with larger flex dimensions,
and discusses various applications.
}\end{rem}

\begin{rem}{\rm
This surface~$\cF$ is a counterexample to a natural generalization
of the Main Lemma~\ref{l:finite}. Let us mention why the proof
of the Main Lemma fails for~$\cF$.  Note that when the initial
polyhedra $F$ and $H$ are homeomorphic to a sphere, we have
\ts $K/\Lambda$ \ts is a surface of genus~2 (in the notation
of the proof of the Main Lemma), where the elements of the
fundamental group corresponding to $\alpha$ and $\beta$
do not commute, i.e.\ stand for two different handles of
the surface. In particular, the inductive step in the
First Case of the proof of the Main Lemma would not work
for surgeries since cutting along $t$ would disconnect
all translates of $S_1$ and all translates of $S_2$,
see Remark~\ref{r:non-ex}.
}\end{rem}

\end{document}